\documentclass[12pt]{article}
\usepackage{amsmath}
\usepackage{amssymb}
\usepackage{theorem}
\usepackage{enumerate} 
\usepackage{graphicx, tikz} 
   \usepackage{hyperref}

\usepackage{nameref}
\makeatletter
\let\orgdescriptionlabel\descriptionlabel
\renewcommand*{\descriptionlabel}[1]{%
  \let\orglabel\label
  \let\label\@gobble
  \phantomsection
  \edef\@currentlabel{#1}%
  \let\label\orglabel
  \orgdescriptionlabel{#1}%
}

\numberwithin{equation}{section}

\newtheorem{thm}{Theorem}[section]
\newtheorem{lemma}[thm]{Lemma}

\newtheorem{prop}[thm]{Proposition}
\newtheorem{cor}[thm]{Corollary}
{\theorembodyfont{\rmfamily}
\newtheorem{defn}[thm]{Definition}
\newtheorem{example}[thm]{Example}

\newtheorem{rmk}[thm]{Remark}
}

\newcommand{\R}{{\mathbb R}}
\newcommand{\Z}{{\mathbb Z}}
\newcommand{\C}{{\mathbb C}}
\newcommand{\D}{{\mathbb D}}
\newcommand{\E}{{\mathbb E}}

\newcommand{\PP}{{\mathbb P}}
\newcommand{\Q}{{\mathbb Q}}
\newcommand{\bbS}{{\mathbb S}}

\newcommand{\cA}{{\mathcal A}}
\newcommand{\cB}{{\mathcal B}}

\newcommand{\cE}{{\mathcal E}}

\newcommand{\cF}{{\mathcal F}}

\newcommand{\homega}{{\widehat \omega}}
\newcommand{\hPsi}{{\widehat \Psi}}
\newcommand{\hv}{{\hat v}}
\newcommand{\tPsi}{{\widetilde \Psi}}

\newcommand{\tH}{{\widetilde H}}
\newcommand{\tomega}{{\widetilde \omega}}

\newcommand{\tQ}{{\widetilde Q}}

\newcommand{\tu}{{\tilde u}}
\newcommand{\tv}{{\tilde v}}

\newcommand{\tZ}{{\widetilde Z}}
\newcommand{\ttau}{{\tilde \tau}}

\newcommand{\bF}{{\bar F}}
\newcommand{\bY}{{\bar Y}}

\newcommand{\bw}{{\bar w}}
\newcommand{\bT}{{\bar T}}
\newcommand{\bu}{{\bar u}}
\newcommand{\bv}{{\bar v}}
\newcommand{\bPsi}{{\bar{\Psi}}}
\newcommand{\oPsi}{{\overline{\Psi}}}
\newcommand{\btau}{{\bar{\tau}}}
\newcommand{\bR}{{\bar{R}}}
\newcommand{\bpi}{{\bar{\pi}}}
\newcommand{\bmu}{{\bar{\mu}}}
\newcommand{\bzeta}{{\bar{\zeta}}}

\newcommand{\infb}{{\textstyle{\inf_b}}}
\newcommand{\supb}{{\textstyle{\sup_b}}}

\renewcommand{\thesubsection}{\arabic{section}.\arabic{subsection}}

\newcommand{\qed}{\hfill \mbox{\raggedright \rule{.07in}{.1in}}}
 
\newenvironment{proof}{\vspace{1ex}\noindent{\bf
Proof}\hspace{0.5em}}{\hfill\qed\vspace{1ex}}
\newenvironment{pfof}[1]{\vspace{1ex}\noindent{\bf Proof of
#1}\hspace{0.5em}}{\hfill\qed\vspace{1ex}}

\newcommand{\eps}{{\epsilon}}
\newcommand{\Leb}{\operatorname{Leb}}
\newcommand{\Int}{\operatorname{Int}}

\newcommand{\supp}{\operatorname{supp}}
\newcommand{\diam}{\operatorname{diam}}
\newcommand{\sgn}{\operatorname{sgn}}

\title{Multivariate stable mixing local limit theorem for \\ intermittent diffeomorphisms and flows}

 \author{
 Ian Melbourne \thanks{Mathematics Institute, University of Warwick, Coventry, CV4 7AL, UK}
 \and
 {Fran\c{c}oise P\`ene}
 \thanks{Univ Brest, Universit\'e de Brest, LMBA,
 UMR CNRS 6205, 
 6 avenue Le Gorgeu, 29238 Brest cedex, France}
 \and
 Dalia Terhesiu
 \thanks{Mathematisch Instituut,
 University of Leiden, Niels Bohrweg 1, 2333 CA Leiden, Netherlands}
 }
 
\date{11 October 2026}

\begin{document}

 \maketitle
 
\begin{abstract}
We prove mixing local limit theorems for dynamical systems in the superdiffusive regime where there is convergence to a multivariate stable law (including the nonstandard domain of the central limit theorem).
We apply our results to $d$-dimensional observables for intermittent maps and solenoids, as well as intermittent semiflows and flows.
\end{abstract}

\tableofcontents

\section{Introduction}

Numerous statistical properties that are standard in the probability literature are known to hold for sufficiently chaotic dynamical systems and sufficiently smooth observables. For example, the central limit theorem (CLT) holds for H\"older observables for Axiom~A diffeomorphisms and flows~\cite{Bowen75,Ratner73,Ruelle78,Sinai60,Sinai72} and for large classes of nonuniformly expanding/hyperbolic systems~\cite{BunimovichSinaiChernov91,Liverani95,MN05,MT04,Young98,Young99}. Local versions of the CLT for discrete dynamical systems were obtained in~\cite{CoehloParry90,Gouezel05,RousseauEgele83}, but the continuous time case was more problematic and was only resolved relatively recently by Dolgopyat \& Nandori~\cite{DolgopyatNandori20}. (See also~\cite{AaronsonNakada17,DolgopyatNandori16,Iwata08}.)

There are many situations where the standard CLT fails and there is instead convergence to an $\alpha$-stable law $\alpha\in(1,2)$, see~\cite{Bruin23,BruinTerhesiuTodd21,Gouezel04,JungZhang18,Zweimuller03}.
Here the standard $n^{1/2}$ normalisation is replaced by superdiffusive $n^{1/\alpha}$ normalisation. 
There is also the borderline situation where the CLT holds with nonstandard normalisation, see $(n\log n)^{1/2}$, see~\cite{BalintChernovDolgopyat11,BalintGouezel06,Gouezel04,LMM26,LMMsub,SzaszVarju07,Zweimuller03}.
In particular, the CLT with nonstandard normalisation was proved for infinite horizon planar periodic dispersing billiards by Sz\'asz \& Varj\'u~\cite{SzaszVarju07} who also obtained local limit theorems for these examples. For further developments, see~\cite{PeneTerhesiu21}. Local CLTs with nonstandard normalisation for the corresponding Lorentz gas flow were recently obtained by P\`ene \& Terhesiu~\cite{PeneTerhesiu25}. Aaronson \& Denker~\cite{AD01b} had previously proved the nonstandard local CLT for Gibbs-Markov maps, and treated the stable law case too in~\cite{AD01}.

In this paper, we obtain results on local limit theorems in the stable law regime that are sufficiently general to include intermittent dynamics~\cite{PomeauManneville80} both in discrete and continuous time. Our method also covers the local CLT with nonstandard normalisation. Unlike in the afore-mentioned references, we allow observables with values in $\R^d$ for all $d\ge1$. 

We begin with some illustrative examples.
Proofs are given in Section~\ref{sec:PM}.

\begin{example} \label{ex:CFKM}
An explicit example of an intermittent map to which our theory applies is the map $T:M\to M$, $M=[0,1]$, given by
\begin{equation}
    \label{eq:CFKM}
    Tx = \begin{cases} 
        x(1+3^{1/\alpha} x^{1/\alpha}), & x \in[0, \frac{1}{3})  , \\
        3x-1, & x\in[\frac{1}{3}, \frac{2}{3})  , \\
        1 - (1-x) (1 + 3^{1/\alpha} (1-x)^{1/\alpha}), & x\in[ \frac{2}{3},1],
  \end{cases}
\end{equation}
where $\alpha\in(1,2]$. There is a unique absolute continuous $T$-invariant probability measure $\mu_M$.
This example is taken from~\cite{CFKM20}; the symmetry around $x=\frac12$ is irrelevant but we require that the exponents $1+1/\alpha$ at the neutral fixed points coincide.

Let $a_n=n^{1/\alpha}$ for $\alpha\in(1,2)$ and
$a_n=(n\log n)^{1/2}$ for $\alpha=2$.
Given $v:M\to\R^d$, we define $v_n=\sum_{j=0}^{n-1}v\circ T^j$.

\begin{thm} \label{thm:CFKM}
Let $T:M\to M$ be the map~\eqref{eq:CFKM} with $\alpha\in(1,2]$.
Let $v:M\to\R$ be a H\"older observable with $\int_M v\,d\mu_M=0$ and satisfying $v(0),\,v(1)\neq0$ and $v(0)\neq v(1)$.  It is known (see for example~\cite{CFKM20}) that $a_n^{-1}v_n$ converges in distribution to a two-sided $\alpha$-stable law $X_\alpha$; we denote the density of $X_\alpha$ by $g$.

 Under an aperiodicity assumption that holds typically,
\[
\lim_{n\to\infty}a_n\int_M \zeta_1\,\zeta_2\circ T^n\,\xi\circ(v_n-qa_n)\,d\mu_M
=g(q)\int_M \zeta_1\,d\mu_M \int_M \zeta_2\,d\mu_M \int_\R \xi\,d\Leb,
\]
for all $\zeta_1,\,\zeta_2\in C_c\big((0,1)\big)$, $\xi\in C_c(\R)$, $q\in\R$.
\end{thm}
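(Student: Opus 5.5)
The plan is to induce on a region away from the neutral fixed points, where the dynamics is uniformly expanding, and to reduce the statement to a mixing local limit theorem for the induced Gibbs--Markov map. Concretely, take $Y=[\frac13,\frac23]$, or a slightly larger interval whose closure avoids $0$ and $1$. Let $\tau$ be the first return time to $Y$ and $F=T^\tau:Y\to Y$. Then $F$ is a full-branch Gibbs--Markov map, and $\mu_M(\tau>n)\sim c\,n^{-\alpha}$ because the exponents at the two fixed points coincide. The induced observable is $V=\sum_{j<\tau}v\circ T^j$. Near $0$ we have $V\approx v(0)\tau$ plus a bounded correction, and near $1$ we have $V\approx v(1)\tau$, so $V$ lies in the domain of attraction of a stable law. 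The conditions $v(0),v(1)\ne0$ and $v(0)\ne v(1)$ ensure that this law is two-sided and nondegenerate, and for $\alpha=2$ we are in the nonstandard domain. Since $\zeta_1,\zeta_2$ have compact support in $(0,1)$, their supports meet only finitely many levels of the tower over $Y$. By a finite partition of unity we may therefore assume that each is supported on a set that is a bounded-level image of a piece of $Y$. This lets us write $\zeta_1$, $\zeta_2$ as lifts of functions on the Young tower/suspension $\{(y,\ell):\ell<\tau(y)\}$ with bounded $\ell$.

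Next we decompose the correlation by first return and last entry to $Y$. We have
\[
\int_M \zeta_1\,\zeta_2\circ T^n\,\xi(v_n-qa_n)\,d\mu_M=\sum_{j,k,m}\int_Y \tilde\zeta_{1,j}\,\tilde\zeta_{2,k}\circ F^m\,\xi(V_m+w_{j,k}-qa_n)\,1_{\{\tau_m=n-j+k\}}\,d\mu_Y .
\]
Here $V_m$ and $\tau_m$ are Birkhoff sums under $F$, and $j,k$ are bounded because of the supports. The core estimate is then a joint mixing local limit theorem for $(V_m,\tau_m)$ on $\R\times\Z$, with $(V,\tau)$ in the domain of a bivariate stable law. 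This is the $d$-dimensional result the paper establishes, applied with $d=2$ (one discrete coordinate), via the twisted transfer operators $P(s,\theta)f=P(e^{isV+i\theta\tau}f)$. It uses the Aaronson--Denker expansion of the leading eigenvalue, $1-\lambda(s,\theta)\sim$ the stable characteristic exponent of $(V,\tau)$, together with the aperiodicity assumption, which says that $P(s,\theta)$ has spectral radius $<1$ for $(s,\theta)\ne0$ in $\R\times(\R/2\pi\Z)$.

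Summing over $m$ should produce the stated limit. On the relevant scale, $\tau_m\approx n$ forces $m\approx n/\E\tau$, and $V_m-qa_n$ must be $O(1)$. The local limit theorem gives a factor $\approx a_m^{-1}b_m^{-1}\,\tilde g\big((qa_n)/a_m,(n-m\bar\tau)/b_m\big)$. Since $\tau$ also has tails of index $\alpha$, we have $b_m\asymp a_m$. A Riemann-sum computation over $m$ then collapses the bivariate density to the marginal density $g(q)$ of the limit law of $a_n^{-1}v_n$, weighted by $1/\E\tau$. Kac's formula $\mu_M=\frac{1}{\bar\tau}\sum_\ell F$-pushforwards converts the factors into $\int\zeta_1\,d\mu_M\int\zeta_2\,d\mu_M\int\xi\,d\Leb$.

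The main obstacle is controlling the sum over $m$ away from $m\approx n/\bar\tau$, i.e.\ ensuring that large deviations of $\tau_m$ do not contribute. For this I would use a uniform local bound of the form $\sup_x\mu_Y(V_m\in x+[0,1],\tau_m=N)\ll a_m^{-2}$, combined with the tail estimate $\mu_Y(\tau_m=N)\ll m\,N^{-\alpha-1}$ when $N\gg m$, obtained via the one-big-jump principle. I expect these to give dominated convergence. It also has to be checked that passing from $V$ on $Y$ back to $v_n$ (the boundary terms $w_{j,k}$) is harmless; since they are bounded, they only shift the argument of $\xi$ by $O(1)$, which the local limit theorem absorbs.
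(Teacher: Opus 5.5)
Your overall architecture matches the paper's: first-return inducing, a joint MLLT for $(V,\tau)$ from twisted transfer operators plus aperiodicity, a decomposition over the number $m$ of returns, and a Riemann sum near $m\approx n/\bar\tau$ that collapses the bivariate density to the marginal. However, the step you flag as the main obstacle does not close with the bounds you propose. The bound $\mu_Y(V_m\in x+[0,1],\,\tau_m=N)\ll a_m^{-2}$ does not decay in the deviation $N-m\bar\tau$. The bound $\mu_Y(\tau_m=N)\ll mN^{-\alpha-1}$ does not localise $V_m$, so it lacks a factor of order $a_m^{-1}$. Their minimum is not enough. Even granting the deviation version $\mu_Y(\tau_m=m\bar\tau+x)\ll m x^{-\alpha-1}$ for $x\gg a_m$, the minimum is of order $a_n^{-2}$ for all $m$ with $La_n<n-m\bar\tau<\min\{(na_n^2)^{1/(\alpha+1)},cn\}$. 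These $m$ alone contribute of order $a_n^{-1}\min\{(na_n^2)^{1/(\alpha+1)},cn\}\to\infty$ to $a_n\sum_m$. What is needed is a joint local large deviation bound of product form,
\[
\mu_Y\big((V_m,\tau_m-m\bar\tau)\in B_h(z)\big)\ll \frac{m}{a_m^{2}}\,\frac{\tilde\ell(|z|)}{|z|^{\alpha}}\Big(1+\Big(\frac{|z|}{a_m}\Big)^r\Big),
\]
which combines the big-jump cost $m|z|^{-\alpha}$ with the two-dimensional local factor $a_m^{-2}$. This is hypothesis (N2), in the hybrid form (NL2). With it, Karamata gives $O(L^{-(\alpha-1-r)})$ for $La_n<|m-n/\bar\tau|<cn$, and the ranges $m\gg n$ and $m\ll n$ vanish because $\alpha+1/\alpha>2$ and $d=1<\alpha^2$. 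This bound is a genuine ingredient, not a consequence of a standard LLT plus an LLD for $\tau$ alone. The paper proves it in Theorem~\ref{thm:LLD} by perturbed transfer operators. Because $V$ has H\"older constant $\ll\tau|_b$ on each partition element $b$, $(V,\tau)$ satisfies only \eqref{eq:Psireg}, and the continuity estimate for $\partial_qR(s)$ must be weakened; this is the source of the factor $1+(|z|/a_m)^r$.

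There is also an error in your reduction. For $Y=[\frac13,\frac23]$ it is false that compactly supported $\zeta_1,\zeta_2$ meet only finitely many tower levels. The projection $\pi(y,\ell)=T^\ell y$ is not injective, and each $x\in(0,\frac13)$ has a lift at every level $\ell\ge1$. The lift is $(y,\ell)$, where $y\in Y$ is the middle-branch preimage of the $(\ell-1)$-th left-branch preimage of $x$. So $\zeta_1\circ\pi$ lives on infinitely many levels, and discarding the high ones would need MLLT upper bounds uniform in the level, which you do not have. Your claim holds only if $Y\supset\supp\zeta_1\cup\supp\zeta_2$, which is what the paper does: it enlarges the inducing set to $[\eps,1-\eps]$, with $\eps$ chosen so that the first return map stays Gibbs--Markov. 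Since $R$ is a first return time, $\pi^{-1}(Y)=Y\times\{0\}$, so both test functions live on the base. Then $v_n(y)=V_m(y)$ exactly when $\tau_m(y)=n$, and no boundary terms arise. Had boundary terms remained, note that a bounded shift depending on $x$ and $T^nx$ is not automatically absorbed by the LLT; it must be nearly constant on pieces, as in (N4) and Lemma~\ref{lem:vague}. Finally, $V-v(i)\tau$ on $Y_i$ is bounded only for $\eta>1/\alpha$. In general it is $O(1+\tau^{1-\alpha\eta})\in L^p$ for some $p>\alpha$ (Proposition~\ref{prop:LSV}), and that is what suffices for regular variation and nondegeneracy of $(V,\tau)$ via Proposition~\ref{prop:jointrv}.
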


The $\xi$ component encompasses the stable local limit theorem, whereas the test functions $\zeta_1$ and $\zeta_2$ represent mixing properties of $T$. Hence Theorem~\ref{thm:CFKM} is called a \emph{mixing local limit theorem (MLLT)}.
\end{example}

\begin{example}  \label{ex:Thaler}
We can extend to $C^\eta$ vector-valued observables $v:M\to\R^d$ of Thaler maps (see Section~\ref{sec:LSV}) with $k$ neutral fixed points $x_1,\dots,x_k$ all with exponent $1+1/\alpha$. We again assume that $v(x_i)\neq0$, $i=1,\dots,k$, and we require further that the vectors $\{(v(x_i),1),\, i=1,\dots,k\}$ span $\R^{d+1}$. (In particular, $k\ge d+1$.)
We also assume an aperiodicity assumption that holds typically.
Provided $d<\alpha^2$, we obtain a stable MLLT of the form
\[
\lim_{n\to\infty}a_n^d\int_M \zeta_1\,\zeta_2\circ T^n\,\xi\circ(v_n-qa_n)\,d\mu_M
=g(q)\int_M \zeta_1\,d\mu_M \int_M \zeta_2\,d\mu_M \int_{\R^d} \xi\,d\Leb,
\]
for all $\zeta_1,\,\zeta_2\in C_c\big(M\setminus\{x_1,\dots,x_k\}\big)$, $\xi\in C_c(\R^d)$, $q\in\R^d$.
Here $g$ is the density of the $d$-dimensional $\alpha$-stable law limit for $n^{-1/\alpha}v_n$.

The assumption $d<\alpha^2$ can be 
removed provided $\eta\in(0,1]$ is sufficiently close to $1$ and a further technical condition is satisfied. For this it suffices that $v(x_i)$ has positive first coordinate for all $i=1,\dots,k$.
\end{example}

\begin{example} \label{ex:ex}
Our results go over almost word for word to nonuniformly expanding semiflows built over intermittent maps of the types in Examples~\ref{ex:CFKM} and~\ref{ex:Thaler}.
See Section~\ref{sec:intflow}. The only differences are that the neutral fixed points become neutral periodic orbits and discrete time $n$ is changed to continuous time $t$.

Also, our results extend almost word for word to uniformly hyperbolic intermittent invertible maps and flows provided that we assume exponential contraction along stable leaves, see Sections~\ref{sec:sol} and~\ref{sec:intflow}.
However, when the contraction along stable leaves is subexponential, our results hold only for a restricted class of H\"older observables.
\end{example}

\begin{rmk}
All results above are stated for H\"older observables but, as usual, they apply equally well to dynamically H\"older observables.
\end{rmk}

\begin{rmk}
For brevity, we have focussed on nonlattice local limit theorems in this introduction. In Section~\ref{sec:absYT} and later sections, we also consider observables $v:M\to\Z^d$ as well as observables that are cohomologous to observables with values in a lattice.
\end{rmk}

We end this introduction by discussing several caveats:
\begin{itemize}
\item[(i)]  In the examples above, we require that the number $k$ of neutral fixed points (or periodic orbits) exceeds the dimension $d$.
This is due to a nondegeneracy assumption, Definition~\ref{defn:nondeg}.
(In Theorem~\ref{thm:CFKM}, this is the condition that $v(0)\neq v(1)$;
in Example~\ref{ex:Thaler}, it is the spanning assumption on $(v(x_i),1)$.)

We expect that it is a straightforward exercise to relax the nondegeneracy assumption and hence to have no restriction on the number of neutral fixed points, resolving this caveat.
\item[(ii)] 
If $d\ge\alpha^2$, we require further technical assumptions as in Example~\ref{ex:Thaler}.

We note that this caveat did not arise in previous work~\cite{DolgopyatNandori20,PeneTerhesiu25} which restricted to the cases $d=1$ and $d=\alpha=2$.
\item[(iii)]  There are technical issues in extending results from nonuniformly expanding maps/semiflows to their invertible nonuniformly hyperbolic counterparts~\cite[Section~8]{BBM19} as indicated in Example~\ref{ex:ex}. 

These issues are easily resolved when there is exponential contraction along stable leaves as is assumed in~\cite{DolgopyatNandori20}, or when the observables of interest are effectively constant along stable leaves as in~\cite{PeneTerhesiu25}. In general, the situation is unresolved even for the much easier standard CLT regime considered in~\cite{DolgopyatNandori20}.

\item[(iv)] The results in this paper rely heavily on the existence of a \emph{first} return map that is uniformly expanding/hyperbolic with bounded distortion. Many examples~\cite{BalintChernovDolgopyat11,BalintGouezel06,Bruin23,BruinTerhesiuTodd21,DolgopyatNandori20,LMMsub,PeneTerhesiu25,SzaszVarju07} require further (non-first return) inducing to obtain uniformly expansion/hyperbolicity with bounded distortion. Reinducing creates lifting issues for test functions $\zeta_1$, $\zeta_2$.

Such issues for mixing local central limit theorems are dealt with in~\cite[Proposition~4.3]{DolgopyatNandori20}. The corresponding result in the superdiffusive regime is more subtle and is the subject of work in progress.
\end{itemize}

The remainder of this paper is organised as follows:

In Section~\ref{sec:abstract}, we state and prove an abstract result, Theorem~\ref{thm:abstract}, on the stable MLLT for suspension flows.
Section~\ref{sec:rv} contains preliminary material on regularly varying vector-valued observables. 
In Section~\ref{sec:GM}, we recall the notion of Gibbs-Markov semiflow, and show how to verify the hypotheses of Theorem~\ref{thm:abstract} for such systems.
Section~\ref{sec:GM2} does the same for Gibbs-Markov flows.
In Section~\ref{sec:YT}, we consider the analogous questions for discrete time dynamical systems.

In Section~\ref{sec:NUE}, we use our abstract results to obtain stable MLLTs for nonuniformly expanding maps and semiflows.
Then we consider applications to intermittent dynamical systems in Section~\ref{sec:PM}.

\vspace{2ex}

\paragraph{Notation}
We use ``big O'' and $\ll$ notation interchangeably, writing $a_n=O(b_n)$ or $a_n\ll b_n$
if there are constants $C>0$, $n_0\ge1$ such that
$a_n\le Cb_n$ for all $n\ge n_0$.
We write $a_n\approx b_n$ if $a_n\ll b_n \ll a_n$.
As usual, $a_n=o(b_n)$ means that $\lim_{n\to\infty}a_n/b_n=0$
and $a_n\sim b_n$ means that $\lim_{n\to\infty}a_n/b_n=1$.

For $X$ a topological space, let $C_c(X)$ denote the space of continuous functions supported in a compact subset of $X$.
Given a measurable subset $E\subset\R^m$, we write $|E|$ to denote the Lebesgue measure of $E$. We denote the open unit ball of radius $h$ centred at $x$ by $B_h(x)$.

Let $\bbS^{m-1} = \{x \in \R^m : |x| = 1 \}$ denote the unit sphere in $\R^m$,
where $|x|$ denotes the Euclidean norm of $x$.

Given $g:\R^m\to\R$, we define the Fourier transform $\hat g(x)=(2\pi)^{-m}\int_{\R^m}e^{-is\cdot x}g(s)\,ds$.

\section{Abstract stable mixing local limit theorem for flows}
\label{sec:abstract}

In this section, we obtain an abstract result concerning the stable mixing local limit theorem (stable MLLT) for suspension flows. 

\subsection{Abstract setting and hypotheses}
\label{subsec:abstract}

Our abstract setting is the following.
Let $(Y,\mu)$ be a probability space and let $F:Y\to Y$ be
a measure-preserving transformation.
Let $\tau:Y\to\R^+$ be an integrable roof function with $\btau=\int_Y \tau\,d\mu$.
We form the suspension 
\[
Y^\tau=\{(y,u)\in Y\times\R:0\le u\le\tau(y)\}/\sim
\quad\text{where}\quad (y,\tau(y))\sim (Fy,0)\},
\]
with probability measure $\mu^\tau=(\mu\times\rm{Lebesgue})/\btau$.
Define the measure-preserving suspension flow (or semiflow)
$F_t:Y^\tau\to Y^\tau$ given by
$F_t(y,u)=(y,u+t)$ computed modulo identifications.

For $d\ge1$, let $w:Y^\tau\to\R^d$ be an integrable observable with $\int_{Y^\tau}w\,d\mu^\tau=0$.
Define the induced observable
\[
W:Y\to\R^d, \qquad W(y)=\int_0^{\tau(y)}w(y,u)\,du,
\]
and the pair 
\[
\Psi=(W,\ttau):Y\to\R^{d+1}, \quad \text{where} \quad
\ttau=\tau-\btau.
\]
Then $\int_Y \Psi\,d\mu=0$.
As usual, we write $\Psi_n=\sum_{j=0}^{n-1}\Psi\circ F^j$
and similarly for $W_n,\,\tau_n, \ttau_n$.

Fix $\alpha\in(1,2]$ and let $\ell:[0,\infty)\to(0,\infty)$ be slowly varying\footnote{A measurable function $\ell:[0,\infty)\to(0,\infty)$ is \emph{slowly varying} if $\lim_{t\to\infty}\ell(\lambda t)/\ell(t)=1$ for all $\lambda>0$.}.
Define $\tilde\ell=\ell$ for $\alpha<2$ and
$\tilde\ell(x)=1+\int_1^{1+x}\frac{\ell(u)}{u}\,du$ for $\alpha=2$. Let $a_n>0$ satisfy
$a_n^\alpha\sim n\tilde\ell(a_n)$ as $n\to\infty$.

Fix a collection $\cA_0$ of measurable subsets of $Y$.
Let $\cA$ denote the collection of rectangles of the form
 $A=A_0\times I\subset Y^\tau$ where $A_0\in\cA_0$ and 
$I\subset [0,\inf_{A_0}\tau]$ is an interval.

Our main hypotheses are as follows:
\begin{description}
\item[(N1)\label{itm:N1}] \textbf{Joint stable MLLT for $\Psi=(W,\ttau)$.}
We suppose that there is a 
continuous density $g_{d+1}$ for a $(d+1)$-dimensional random vector with the following property:

Let $A_0,B_0\in \cA_0$. Let $K\subset\R^{d+1}$ be bounded with $|\partial K|=0$, and let $L>0$.  Then 
\begin{align*}
\lim_{n\to\infty}
\sup_{\genfrac{}{}{0pt}{}{z\in\R^{d+1}}{|z|\le L}}
\Big|
a_n^{d+1}\mu\big(y\in A_0\cap F^{-n} B_0 :\,  \Psi_n &(y)\in za_n +K\big)
\\[-2.75ex] & - g_{d+1}(z)\mu(A_0)\mu(B_0)|K|\Big|=0 .
\end{align*}
\item[(N2)\label{itm:N2}] \textbf{Joint LLD for $\Psi=(W,\ttau)$.}
For any $h,\,r_0>0$, there exist $r\in(0,r_0)$, $C>0$ such that
\[
\mu(\Psi_n\in B_h(z))\le C \frac{n}{a_n^{d+1}}\, \frac{\tilde\ell(|z|)}{|z|^\alpha}\Big(1+ \Big(\frac{|z|}{a_n}\Big)^r\Big)
\quad\text{for all $n\ge1$, $z\in\R^{d+1}$.}
\]
\item[(N3)\label{itm:N3}] \textbf{Extra assumption for $d\ge\alpha^2$.} 
When $d\ge\alpha^2$, set $\gamma_0=(d-\alpha^2)/(d+1-2\alpha)\in[0,1)$.\footnote{Note that $d-\alpha^2<d+1-2\alpha$ for all $d\ge1$, $\alpha>1$.}
We require that there exists $\gamma>\gamma_0$ such that
for any $q\in\R^d$ and bounded sets
$K_1\subset\R^{d+1}$, $K_2\subset\R$, 
\begin{align*}
\sum_{0\le n\le t^\gamma}\mu^\tau\Big\{(y,u)\in Y^\tau: \,  \Psi_n(y) & \in (qa_t,t-n\btau)+K_1,
\\  \tau_n(y) & \le u+t<\tau_{n+1}(y),\,u\in K_2\Big\}
=o(a_t^{-d}).
\end{align*}
\end{description}

Define
\begin{equation} \label{eq:H}
H:Y^\tau\to\R^d, \qquad H(y,u)=\int_0^u w(y,t)\,dt.
\end{equation}

\begin{description}
\item[(N4)\label{itm:N4}] \textbf{Coboundary}
For any $A\in\cA$ and any $\delta>0$, there exist 
disjoint subsets $A_1,\dots,A_n\subset A$ with $A_1,\dots,A_n\in\cA$ such that
\[
\mu^\tau\Big(A\setminus \bigcup_{j=1}^nA_j\Big) < \delta
\quad\text{and}\quad 
\max_{1\le j\le n}\, \diam H(A_j)< \delta .
\]
In addition, $H(A)$ is a bounded subset of $\R^d$ for each $A\in\cA$.
\end{description}

\begin{rmk} 
(a) Hypothesis \ref{itm:N1} is ``fully nonlattice'': the roof function $\tau$ is $\R$-valued, the observable $w$ is $\R^d$-valued, and \ref{itm:N1} is a nonlattice MLLT for the induced joint observable $\Psi$ which has values in $\R^{d+1}$.
In Section~\ref{sec:YT}, we consider fully lattice and hybrid results for maps where $\tau$ is $\Z$-valued.

\vspace{1ex} \noindent (b)
Hypothesis~\ref{itm:N2} can be relaxed slightly without weakening any of the results. 
Let $\ell_1$ be a slowly varying function.
It suffices that there exists $D>0$ such that \ref{itm:N2} holds as stated for $|z|\le Dn$ and holds with $\tilde\ell$ replaced by $\ell_1$ for $|z|>Dn$. 

\vspace{1ex} \noindent (c)
The dimension restriction in \ref{itm:N3} is somewhat unsatisfactory but the restriction $d\le 2$ is already present in the simpler setting of~\cite{DolgopyatNandori20}, see~\cite[Lemmas~4.3 and~4.6]{DolgopyatNandori20}, and also in~\cite{PeneTerhesiu25}.

\vspace{1ex} \noindent (d)
Assumptions \ref{itm:N1}--\ref{itm:N3} enable us to prove a stable MLLT for the flow modulo a coboundary $H\circ F_t-H$. Dealing with the coboundary is generally considered to be a ``folklore'' result, though the details are often vague  and rely on certain non-explicit topological/metric properties of the underlying probability space (separability, completeness, local compactness). 
Also, it is usually required that $H$ is continuous and bounded, which is not the case in our main examples.

Assumption~\ref{itm:N4} enables us to remain in a purely probabilistic setting in the formulation of Theorem~\ref{thm:abstract}, and the setting is sufficiently flexible for our applications.
\end{rmk}

We can now state our stable MLLT for $w:Y^\tau\to\R^d$. Let $w_t=\int_0^t w\circ F_s\,ds$.
Let $g_{d+1}$ be the density in hypothesis~\ref{itm:N1}, and
define the $d$-dimensional density 
\begin{equation} \label{eq:gd}
g(q)=\btau^{d/\alpha}g_d(q\btau^{1/\alpha}) 
\quad\text{where}\quad
g_d(q)=\int_{-\infty}^\infty g_{d+1}(q,y)\,dy.
\end{equation}

 \begin{thm}[Stable MLLT for $w$]
 \label{thm:abstract}
Assume \ref{itm:N1}--\ref{itm:N4}.
 Let $A, B\in\cA$.
 Let $K\subset\R^d$ be bounded with $|\partial K|=0$ and let $q\in\R^d$. 
Then 
\[
  \lim_{t\to\infty}a_t^d\mu^\tau\Big(x\in A\cap F_t^{-1} B:\, w_t(x)\in qa_t+K\Big)
  = g(q)\mu^\tau(A)\mu^\tau(B)|K|.
 \]
\end{thm}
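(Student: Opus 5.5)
The plan is the standard suspension-flow argument (as in Dolgopyat–Nandori). First I reduce to smooth test functions and to rectangles $A=A_0\times I$, $B=B_0\times J$. Then I decompose according to the number $n$ of returns to $Y$ up to time $t$, and sum the MLLT \ref{itm:N1} over $n$. The coboundary is dealt with at the end using \ref{itm:N4}.

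\textbf{Step 1 (decomposition).} For $x=(y,u)\in A$ with $F_tx\in B$, there is a unique $n$ with $\tau_n(y)\le u+t<\tau_{n+1}(y)$. In that case $F_tx=(F^ny,\,u+t-\tau_n(y))$, and
\[
w_t(x)=W_n(y)+H(F_tx)-H(x).
\]
I first prove the theorem with $w_t$ replaced by $W_n(y)$, i.e.\ modulo the coboundary. The event becomes: $y\in A_0\cap F^{-n}B_0$, $u\in I$, $u+t-\tau_n(y)\in J$, and $W_n(y)\in qa_t+K$. Writing $\tau_n=n\btau+\ttau_n$, the condition on $\ttau_n$ reads
\[
\ttau_n(y)\in t-n\btau+u-J .
\]
So the joint vector $\Psi_n(y)$ must lie in $(qa_t,\,t-n\btau)+K\times(u-J)$, which is a set with $|\partial\cdot|=0$. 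Since $J\subset[0,\inf_{B_0}\tau]$, the constraint $u+t<\tau_{n+1}$ is automatic.

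\textbf{Step 2 (main range).} Take $|n-t/\btau|\le L a_t$. Apply \ref{itm:N1} with $z=(qa_t/a_n,\,(t-n\btau)/a_n)$, which is bounded uniformly in this range, and integrate over $u\in I$. Each such $n$ contributes
\[
a_n^{-(d+1)}\,g_{d+1}(z)\,\mu(A_0)\mu(B_0)\,|K|\,|I|\,|J|\,/\btau .
\]
The sum over $n$ is a Riemann sum. Using $a_n\sim a_{t/\btau}\sim\btau^{-1/\alpha}a_t$ and the substitution $s=(t-n\btau)/a_n$ (so $\Delta n\approx a_n\,ds/\btau$), the sum converges to
\[
a_t^{-d}\,\btau^{d/\alpha}\int_{-L'}^{L'}g_{d+1}(q\btau^{1/\alpha},s)\,ds\;\cdot\;\frac{\mu(A_0)|I|\,\mu(B_0)|J|}{\btau^2}\,|K| .
\]
Since $\mu^\tau(A)=\mu(A_0)|I|/\btau$ and $\mu^\tau(B)=\mu(B_0)|J|/\btau$, letting $L\to\infty$ gives $g(q)\mu^\tau(A)\mu^\tau(B)|K|$ by \eqref{eq:gd}.

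\textbf{Step 3 (tails).} I must show that the remaining $n$ contribute $o(a_t^{-d})$ as $L\to\infty$. There are three ranges.
\begin{itemize}
\item[(a)] For $t^\gamma<n$ with $|n-t/\btau|>La_t$, I use \ref{itm:N2}, covering $(qa_t,t-n\btau)+K_1$ by boundedly many balls. With $|z|\ge|t-n\btau|$, the terms are bounded by
\[
\frac{n}{a_n^{d+1}}\,\frac{\tilde\ell(|t-n\btau|)}{|t-n\btau|^{\alpha}} .
\]
\item[(b)] For $n$ near $t/\btau$, summing gives $\ll a_t^{-d}\cdot t\,\tilde\ell(La_t)(La_t)^{-\alpha}a_t^{-1}\cdot a_t\to\ll a_t^{-d}L^{-\alpha+\epsilon}$, using $a_t^\alpha\sim t\tilde\ell(a_t)$.
\item[(c)] For $n$ far from $t/\btau$, where $|z|\approx t$ or $|z|\approx n$, the factor $(|z|/a_n)^r$ with small $r$ is harmless. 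The sum is roughly
\[
\sum_{t^\gamma<n}\frac{n}{a_n^{d+1}}\,t^{-\alpha+\epsilon},
\]
and this is $o(a_t^{-d})$ exactly when the exponents work out. This is where the restriction $d<\alpha^2$, or else $n>t^\gamma$ with $\gamma>\gamma_0$, enters. For $n\le t^\gamma$, hypothesis \ref{itm:N3} gives $o(a_t^{-d})$ directly.
\end{itemize}
I expect this exponent bookkeeping, especially for small $n$ with $d\ge\alpha^2$, to be the main obstacle. I would check that $\gamma_0$ is precisely the threshold at which $\sum_{n\ge t^\gamma}n^{1-(d+1)/\alpha}t^{-\alpha}\ll t^{-d/\alpha}$ fails.

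\textbf{Step 4 (coboundary).} Fix $\delta$ and use \ref{itm:N4} to split $A$ and $B$ into pieces $A_i,B_j\in\cA$ with $\diam H(A_i),\diam H(B_j)<\delta$, up to sets of measure less than $\delta$. On $A_i\cap F_t^{-1}B_j$, we have $H(F_tx)-H(x)\in c_{ij}+B_{2\delta}(0)$ for fixed vectors $c_{ij}$. Hence the event $w_t\in qa_t+K$ is sandwiched between $W_n\in qa_t+(K-c_{ij})^{\mp2\delta}$, i.e.\ the inner and outer $2\delta$-neighbourhoods. Applying Steps 1–3 to each pair and summing gives upper and lower bounds $g(q)\mu^\tau(A)\mu^\tau(B)(|K|\pm O(\delta))$. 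The exceptional sets contribute at most $O(\delta)$ times the limit, using the upper bound from Steps 1–3 applied to $K$ enlarged by the bounded set $H(A)-H(B)$. Since $|\partial K|=0$, letting $\delta\to0$ finishes the proof.
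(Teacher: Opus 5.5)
Your proposal is correct. Steps 1--3 reproduce the paper's proof of Lemma~\ref{lem:abstract}: the lap-number decomposition, the Riemann sum from \ref{itm:N1} in the window $|n-t/\btau|\le La_t$, the Karamata/Potter tail bounds from \ref{itm:N2}, and \ref{itm:N3} for $n\le t^\gamma$.

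The genuine difference is Step~4. The paper removes the coboundary through the abstract Lemma~\ref{lem:vague}. It tests against $g\in C_c(\R^d)$ and writes $g(\zeta_t+H_1+H_2\circ F_t)=G(H_1,H_2\circ F_t,\zeta_t)$. It then discretises all three arguments using uniform continuity of $G$, including a partition of the $\zeta$-variable into small cells. Finally it returns to sets $K$ with $|\partial K|=0$ via the uniform Portmanteau Lemma~\ref{lem:Port}. Your sandwich works directly with sets. Lemma~\ref{lem:abstract} holds for every bounded null-boundary set and Lebesgue measure is translation invariant. So freezing $H(F_tx)-H(x)$ to within $2\delta$ of $c_{ij}$ and comparing with $K^{\pm2\delta}$ suffices, and $|\partial K|=0$ gives $|K^{\pm2\delta}|\to|K|$.

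This route is more elementary and avoids the Portmanteau step entirely. The paper's version is stated abstractly, with uniformity in a parameter, so it can be reused. For example, it removes $\chi\circ F^n-\chi$ in Theorem~\ref{thm:LLTtwo}, where uniformity in $|z|\le L$ is needed.

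Two routine points should be made explicit in your Step~4. First, choose $\delta$ off a countable set so that the sets $(K-c_{ij})^{\pm2\delta}$ have null boundary. Second, $A\setminus\bigcup_iA_i$ (and likewise $B\setminus\bigcup_jB_j$) need not lie in $\cA$. So the limit on the exceptional set must come from finite additivity: the limit for $A$ minus the limits for the $A_i$, taken with a box containing $K+H(A)-H(B)$. The paper uses the same device implicitly in Lemma~\ref{lem:vague}.

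Two small slips in Step~3 do not affect the conclusion.
\begin{itemize}
\item In (b), the Karamata sum over $|t-n\btau|>L\btau a_t$ is $\approx\tilde\ell(La_t)(La_t)^{1-\alpha}$. The bound is therefore $a_t^{-d}L^{1-\alpha+r}$, not $a_t^{-d}L^{-\alpha+\epsilon}$; you lost a factor of $L$. Also, the factor $(|z|/a_n)^r$ is unbounded in this range, which is exactly why one needs $r<\alpha-1$. This is available since \ref{itm:N2} allows arbitrarily small $r$.
\item In (c), your displayed sum with $t^{-\alpha+\epsilon}$ is valid only for $t^\gamma<n\le c't$. For $n-t/\btau\ge ct$ you must use the decay coming from $|z|\approx n$, as in Proposition~\ref{prop:2}.
\end{itemize}
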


The remainder of this section is concerned with the proof of Theorem~\ref{thm:abstract}.
In Subsection~\ref{sec:pf1}, we prove the result ``modulo a coboundary'' using assumptions \ref{itm:N1}--\ref{itm:N3}. 
In Subsection~\ref{sec:pf2}, we show how to remove the coboundary using assumption~\ref{itm:N4}.

\subsection{Proof of Theorem~\ref{thm:abstract} up to a coboundary}
\label{sec:pf1}

The suspension flow $F_t:Y^\tau\to Y^\tau$ satisfies
 \begin{equation} \label{eq:lap}
  F_t(y,u)=(F^{N_t(y,u)}y, u+t-\tau_{N_t(y,u)}(y)),
 \end{equation} 
 where the \emph{lap number} $N_t(y,u)$ is given by
\[
  \tau_{N_t(y,u)}(y)\le u+t<\tau_{N_t(y,u)+1}(y).
\]
Note that
\[
w_t(y,u)= W_{N_t(y,u)}(y)+H\circ F_t(y,u)-H(y,u),
\]
where $H$ was defined in~\eqref{eq:H}.

 \begin{lemma}
 \label{lem:abstract}
Assume \ref{itm:N1}--\ref{itm:N3}.
 Let $A, B\in\cA$.
 Let $K\subset\R^d$ be bounded with $|\partial K|{=0}$ and let $q\in\R^d$. Then writing $x=(y,u)\in Y^\tau$,
\[
  \lim_{t\to\infty}a_t^d\mu^\tau\Big(x\in A\cap F_t^{-1} B:\, W_{N_t(x)}(y)\in qa_t+K\Big)
  = g(q)\mu^\tau(A)\mu^\tau(B)|K|.
 \]
\end{lemma}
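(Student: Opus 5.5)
The plan is to decompose according to the lap number $N_t(x)=n$ and apply \ref{itm:N1} to each piece. Write $A=A_0\times I$ and $B=B_0\times J$ with $I\subset[0,\inf_{A_0}\tau]$ and $J\subset[0,\inf_{B_0}\tau]$. For $x=(y,u)\in A$ we have $N_t(x)=n$ and $F_tx\in B$ exactly when
\begin{itemize}
\item[(a)] $F^ny\in B_0$, and
\item[(b)] $u+t-\tau_n(y)\in J$.
\end{itemize}
Since $J\subset[0,\inf_{B_0}\tau]$, condition (b) already forces $\tau_n(y)\le u+t<\tau_{n+1}(y)$. Hence
\[
\btau\,\mu^\tau\big(x\in A\cap F_t^{-1}B:\,W_{N_t}\in qa_t+K\big)
=\sum_{n\ge0}\int_I \mu\big(y\in A_0\cap F^{-n}B_0:\,\Psi_n(y)\in (qa_t,\,t-n\btau)+K\times(u-J)\big)\,du .
\]
Each summand is exactly an event of the form in \ref{itm:N1}, with the box $K'=K\times(u-J)$. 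This set has boundary of measure zero and satisfies $|K'|=|K||J|$.

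\textbf{Main range.} Consider $n$ with $|n-t/\btau|\le Ma_t$. There $a_n\sim a_{t/\btau}$ uniformly, so $z_n=(qa_t,\,t-n\btau)/a_n$ stays in a compact set, and \ref{itm:N1} gives
\[
a_n^{-(d+1)}g_{d+1}(z_n)\mu(A_0)\mu(B_0)|K||J|+o(a_n^{-(d+1)})
\]
uniformly in $n$ and $u$. Regular variation gives $a_t/a_{t/\btau}\to\btau^{1/\alpha}$. The sum over $n$ is then a Riemann sum in the variable $s=(t-n\btau)/a_n$ with step $\btau/a_n$. It tends to
\[
\btau^{-1}a_{t/\btau}^{-d}\int_{-M'}^{M'} g_{d+1}(q\btau^{1/\alpha},s)\,ds .
\]
Multiplying by $|I|$ and dividing by $\btau$, and using $\mu^\tau(A)=\mu(A_0)|I|/\btau$ and $\mu^\tau(B)=\mu(B_0)|J|/\btau$, we get, after letting $M\to\infty$, exactly $g(q)\mu^\tau(A)\mu^\tau(B)|K|\,a_t^{-d}$ with $g$ as in \eqref{eq:gd}. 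Continuity of $g_{d+1}$ justifies the Riemann sum, and integrability of $g_d$ justifies the limit $M\to\infty$.

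\textbf{Tails.} It remains to show that the contributions from $|n-t/\btau|>Ma_t$ are $o(a_t^{-d})$ as $t\to\infty$ and then $M\to\infty$. Split the tail into three ranges.

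\emph{Small $n$.} For $n\le t^\gamma$ the contribution is exactly the quantity controlled in \ref{itm:N3}, so it is $o(a_t^{-d})$. When $d<\alpha^2$, this range is instead absorbed into the next estimate.

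\emph{Intermediate $n$.} For $t^\gamma<n$ with $|n-t/\btau|>Ma_t$, use \ref{itm:N2}. Cover $K'$ by finitely many balls $B_h(z)$ with $z=(qa_t,\,t-n\btau)$. Writing $m=|t-n\btau|$, so that $|z|\gg m+a_t$, \ref{itm:N2} bounds each term by
\[
\frac{n}{a_n^{d+1}}\,\frac{\tilde\ell(m)}{m^\alpha}\Big(1+(m/a_n)^r\Big).
\]
For $n\approx t$ and $Ma_t\le m\lesssim t$, summing over $n$ gives
\[
\ll \frac{t}{a_t^{d+1}}\sum_{m\ge Ma_t}\frac{\tilde\ell(m)}{m^{\alpha}}\,(m/a_t)^r\approx \frac{t\,\tilde\ell(a_t)}{a_t^{d+\alpha}}\,M^{r+1-\alpha}.
\]
Since $t\tilde\ell(a_t)\approx a_t^\alpha$, this is $\ll a_t^{-d}M^{r+1-\alpha}$. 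Choosing $r<\alpha-1$ makes it small as $M\to\infty$.

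\emph{Far-from-$t$ values of $n$.} For $n\ll t$ (but $n>t^\gamma$) we have $m\approx t$. The sum is then roughly
\[
\sum_n n\,a_n^{-(d+1)}\,\tilde\ell(t)\,t^{-\alpha}\,(t/a_n)^r .
\]
The exponent bookkeeping here is precisely where $\gamma>\gamma_0$ enters, and where the condition $d<\alpha^2$ allows $\gamma=0$. For $n\gg t$ one uses the same bound, now with $m\approx n$. Alternatively, one uses Remark (b), noting that the factor $n/m^\alpha$ is summable.

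\textbf{Main obstacle.} I expect the exponent bookkeeping in the range $t^\gamma<n\ll t$ to be the main difficulty. One must choose $r$ small and check that
\[
\sum_{t^\gamma<n<t/(2\btau)} n^{1-(d+1-r)/\alpha}\; t^{r-\alpha}
\]
(up to slowly varying factors) is $o(t^{-d/\alpha})$, which is exactly the computation that produces $\gamma_0=(d-\alpha^2)/(d+1-2\alpha)$. A secondary technical point is the uniformity of \ref{itm:N1} over $u\in I$. The sets $K'$ depend on $u$, so I would approximate $I$ by finitely many subintervals on which $u-J$ is sandwiched between two fixed intervals, and then use monotonicity.
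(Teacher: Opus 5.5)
Your proposal is correct and is essentially the paper's proof. It uses the same lap-number decomposition with $K_u=K\times(u-J)$, the same removal of $m\le t^\gamma$ by \ref{itm:N3}, the same Riemann-sum evaluation of the window $|m-t/\btau|\le La_t$ via \ref{itm:N1}, and the same three \ref{itm:N2} tail ranges as Propositions~\ref{prop:1}--\ref{prop:3}, with the same exponent count producing $\gamma_0$. The slips are only cosmetic: the $n$-exponent should be $1-(d+1+r)/\alpha$; for $n\gg t$ summability comes from the factor $a_n^{-(d+1)}$, not from $n/m^\alpha$ (which alone diverges for $\alpha\le2$); and uniformity in $u\in I$ needs no sandwiching, since $(qa_t,t-n\btau)+K\times(u-J)=(qa_t,t+u-n\btau)+K\times(-J)$ and \ref{itm:N1} is uniform in the centre.
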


\begin{proof}
The strategy of the proof 
 follows those of~\cite[Lemma 4.3]{AaronsonNakada17},~\cite[Theorem 1]{AaronsonTerhesiu20} and~\cite[Theorem 3.1]{DolgopyatNandori20}.

Write $A=A_0\times I,\,B=B_0\times J\subset \cA$
 and set $K_u=K\times(u-J)$.
Let $\omega_t=
  \mu^\tau\big( x \in A\cap F_t^{-1}B:\, W_{N_t(x)}(y)\in qa_t+K\big)$.
For $x=(y,u)\in Y^\tau$, it follows from~\eqref{eq:lap} that
\begin{align*}
\omega_t & = \mu^\tau\big(y\in A_0\cap F^{-N_t(x)}B_0,\,u\in I:\,u+t-\tau_{N_t(x)}(y)\in J,\,W_{N_t(x)}(y)\in qa_t+K\big) \\
  &=\sum_{m\ge0}\mu^\tau\big(y\in A_0\cap F^{-m}B_0,\, u\in I:\, 
\Psi_m(y)  \in (qa_t,t-m\btau)+K_u \big).
 \end{align*}
Let $\gamma\in(\gamma_0,1)$ be as in \ref{itm:N3} for $d\ge \alpha^2$ and $\gamma=0$ otherwise.
By \ref{itm:N3}. 
\[
  \omega_t
 = \sum_{m>t^\gamma}\mu^\tau\big(y\in A_0\cap F^{-m}B_0,\, u\in I:\, 
\Psi_m(y)  \in (qa_t,t-m\btau) +K_u \big) + o(a_t^{-d})
\]
Hence
\[
  \btau\omega_t = 
\sum_{m>t^\gamma} \int_I Q_m(t,u)\,du
+ o(a_t^{-d})
\]
where
\[
Q_m(t,u)=\mu\big(y\in A_0\cap F^{-m}B_0:\,
\Psi_m(y)  \in (qa_t,
t-m\btau)+K_u\big).
\]

Fixing $L>0$, we can write
\[
\btau  \omega_t= S_1(t,L)+S_2(t,L)+o(a_t^{-d})
\]
where
\[
  S_1(t,L)=\sum_{\stackrel{m>t^\gamma:}{|m-t/\btau|\le La_t}}\int_I Q_m(t,u)\,du, \qquad
  S_2(t,L)=\sum_{\stackrel{m>t^\gamma:}{|m-t/\btau|> La_t}}\int_I Q_m(t,u)\,du.
\]

Thus, it suffices to show:
\begin{align} \label{eq:S1}
& \lim_{L\to\infty}\lim_{t\to\infty} a_t^d S_1(t,L)=
 \btau g(q)\mu^\tau(A)\mu^\tau(B)|K|, \\
& \lim_{L\to\infty}\limsup_{t\to\infty} a_t^d S_2(t,L)=0.
\label{eq:S2}
\end{align}

In~\eqref{eq:S1}, we are in the range $|m-t/\btau|\le La_t$.
It follows that
$a_t\sim \btau^{1/\alpha}a_m$ and $z_{m,t}=\Big(\dfrac{qa_t}{a_m},\dfrac{t-m\btau}{a_m}\Big)$ is bounded.
By \ref{itm:N1},
\begin{align*}
a_m^{d+1}Q_m(t,u) & =
g_{d+1}(z_{m,t})\mu(A_0)\mu(B_0)|K_u|+h_m(t,u)
\\ &  =
\btau g_{d+1}(z_{m,t})\mu(A_0)\mu^\tau(B)|K|+h_m(t,u)
\end{align*}
where $\lim_{t\to\infty}h_m(t,u)=0$ uniformly in $|m-t/\btau|\le La_t$ and $u\in I$.
Hence
\[
a_t^d\int_I Q_m(t,u)\,du  
=\btau^{2+(d+1)/\alpha} a_t^{-1} g_{d+1}\big(q\btau^{1/\alpha},(t/\btau-m)\btau^{1+1/\alpha} a_t^{-1}\big)\kappa+\tilde h_m(t)
\]
where $\kappa=\mu^\tau(A)\mu^\tau(B)|K|$ and
$\tilde h_m(t)=o(a_t^{-1})$ as $t\to\infty$ uniformly in $|m-t/\btau|\le La_t$.
It follows that
\begin{align*}
  a_t^d  S_1(t,L) 
 =\btau^{2+(d+1)/\alpha} a_t^{-1}\sum_{|j|\le La_t} 
g_{d+1}\big(q\btau^{1/\alpha},j\btau^{1+1/\alpha} a_t^{-1}\big)\kappa+o(1)
\end{align*}
as $t\to\infty$ for each fixed $L$.
Approximating Riemann integrals by Riemann sums with increments of width $\btau^{1+1/\alpha} a_t^{-1}$ yields
\[
\btau^{1+1/\alpha} a_t^{-1}
\sum_{|j|\le La_t}
g_{d+1}\big(q\btau^{1/\alpha},j\btau^{1+1/\alpha} a_t^{-1}\big)
 \to  
\int_{-L\btau^{1+1/\alpha}}^{L\btau^{1+1/\alpha}} g_{d+1}(q\btau^{1/\alpha},y)\,dy
\]
as $t\to\infty$.
Hence
\[
\lim_{L\to\infty} \lim_{t\to\infty}a_t^dS_1(t,L)
  = \btau^{1+d/\alpha} g_d(q\btau^{1/\alpha})\, \kappa
  = \btau g(q)\, \kappa
\]
by definition of $g_d$ and $g$ in~\eqref{eq:gd}.
This completes the verification of~\eqref{eq:S1}.

\vspace{1ex}
Next, $Q_m(t,u)\le 
\sup_{u\in I} \mu(\Psi_m\in \big(qa_t,t-m\btau)+K_u\big)$.
In~\eqref{eq:S2}, we are
in the range $a_t\ll |m-t/\btau|$. Hence $|(qa_t,t-m\btau)|\approx |t-m\btau|$ and it follows from~\ref{itm:N2} that
\[
Q_m(t,u)\ll \tQ_m(t) \quad\text{where}\quad
 \tQ_m(t)=\frac{m}{a_m^{d+1}}\, \frac{\tilde\ell(|t-m\btau|)}{|t-m\btau|^\alpha}\Big(1+\Big(\frac{|t-m\btau|}{a_m}\Big)^r\Big)
\]
for any $r>0$ sufficiently small.
Hence~\eqref{eq:S2} is an immediate consequence of the next three results.

\begin{prop} \label{prop:1}
\(
\lim_{L\to\infty}\limsup_{t\to\infty}a_t^d\sum_{La_t<| m-t/\btau|<ct}\tQ_m(t)=0
\)
for all $c\in(0,1/\btau)$.
\end{prop}

\begin{proof}
By the choice of $c$, in this range, $m\approx t$.
Choose $r\in(0,\alpha-1)$. Then
\begin{align*}
\sum_{La_t<  |m-t/\btau|<ct} \tQ_m(t) 
  & \ll \frac{t}{a_t^{d+1}}
\sum_{ m-t/\btau> L a_t}
\frac{\tilde\ell(m\btau-t)}{(m\btau-t)^\alpha} \Big(1+\Big(\frac{m\btau-t}{a_t}\Big)^r\Big) \\
  & \ll \frac{t}{a_t^{d+1+r}}
\sum_{ m-t/\btau> L a_t}
\frac{\tilde\ell(m\btau-t)}{(m\btau-t)^{\alpha-r}} \\
&  \approx 
\frac{t}{a_t^{d+1+r}} \frac{\tilde\ell(a_t)}{(La_t)^{\alpha-1-r}}
\approx \frac{1}{a_t^d} \frac{1}{L^{\alpha-1-r}}\frac{t\tilde\ell(a_t)}{a_t^{\alpha}}
\end{align*}
by Karamata. 
The result follows since $a_t^\alpha\sim t\tilde\ell(a_t)$.
\end{proof}

\begin{prop} \label{prop:2}
\(
\lim_{t\to\infty}a_t^d\sum_{m-t/\btau \ge ct}\tQ_m(t)=0
\)
for all $c>0$.
\end{prop}

\begin{proof}
In this range, $m\gg t$ so choosing $r\in(0,\alpha-1)$, 
\[
\sum_{ m-t/\btau\ge ct} \tQ_m(t) 
 \ll \frac{t}{a_t^{d+1}}
\sum_{ m-t/\btau\ge ct}
\frac{\tilde\ell(m\btau-t)}{(m\btau-t)^{\alpha-r}}
 \approx \frac{t}{a_t^{d+1}} \frac{\tilde\ell(t)}{t^{\alpha-1-r}}
 =  \frac{1}{a_t^d}\;\frac{\tilde\ell(t)}{t^{\alpha-2-r}a_t}.
\]
By Potter's bounds,
\[
  \frac{\tilde\ell(t)}{t^{\alpha-2-r}a_t}\ll \frac{t^s}{t^{\alpha+1/\alpha-2-r-s}}
=t^{-(\alpha+1/\alpha-2-r-2s)},
\]
for all $s>0$.
Since $\alpha+1/\alpha>2$, we can shrink $r,\,s$ so that $\alpha+1/\alpha>2+r+2s$. Then
  $\dfrac{\tilde\ell(t)}{t^{\alpha-2-r}a_t}=o(1)$
as required.
\end{proof}

\begin{prop} \label{prop:3}
\(
\lim_{t\to\infty}a_t^d\sum_{m\ge t^\gamma,\,m-t/\btau\le -ct}\tQ_m(t)=0
\)
for all $c>0$,
\end{prop}

\begin{proof}
Write the range as $t^\gamma\le m\le c't$ where
$c'=1/\btau-c$. (If $c'\le0$, then there is nothing to prove.)
Let $r,\,s>0$.
Since $t-m\btau\gg t$,
\begin{align*}
\sum_{t^\gamma\le m\le c't} \tQ_m(t)
\ll \sum_{t^\gamma\le m\le c't}\frac{m}{a_m^{d+1}}\,\frac{\tilde\ell(t)}{t^{\alpha-r}}
& \ll t^{-\alpha+r+s} \sum_{t^\gamma\le m\le c't}\frac{1}{m^{(d+1)/\alpha-s-1}}
\\ & \approx t^{-\alpha+r+s} 
\big\{(t^\gamma)^{2-(d+1)/\alpha+s}
+t^{2-(d+1)/\alpha+s}\big\}.
\end{align*}
Since $\alpha+1/\alpha>2$, we can shrink $r,\,s>0$ so that
\[
a_t^d\sum_{t^\gamma\le m\le c't} \tQ_m(t)\ll t^{d/\alpha-\alpha+r+2s}(t^\gamma)^{2-(d+1)/\alpha+s}+o(1).
\]

If $d<\alpha^2$, then $\gamma=0$ and we see immediately that $a_t^d\sum_{1\le m\le c't} \tQ_m(t)=o(1)$ for $r,\,s>0$ sufficiently small.

If $d\ge \alpha^2$, then the choice of $\gamma>\gamma_0$ ensures that
$d/\alpha-\alpha+\gamma(2-(d+1)/\alpha)<0$ and the result again follows.
\end{proof}

This completes the proof of the lemma.
\end{proof}

\subsection{Removing the coboundary $H\circ F_t-H$}
\label{sec:pf2}

\begin{pfof}{Theorem~\ref{thm:abstract}}
We apply Lemma~\ref{lem:vague}. (Uniformity in a parameter $\gamma$ is not required here.)
Let $\zeta_t(x)=W_{N_t(x)}(y)-qa_t$ and $a(t)=a_t^d/g(q)$. Then assumption~\eqref{eq:vague} is satisfied by Lemma~\ref{lem:abstract}.  
Let $H_1=-H$ and $H_2=H$. Then $H_1$ and $H_2$ satisfy property \ref{itm:H} in Appendix~\ref{app:vague} by \ref{itm:N4}.
By Lemma~\ref{lem:vague},
\[
\lim_{t\to\infty}a_t^d 
\mu^\tau (x\in A\cap F_t^{-1}B:\,
\zeta_t(x)+H_1(x)+H_2(F_tx)\in K) 
= \mu^\tau(A)\mu^\tau(B)|K|.
\]
But $\zeta_t(x)+H_1(x)+H_2(F_tx)=w_t(x)-qa_t$, so the result is proved.
\end{pfof}

\section{Preliminaries on regular variation}
\label{sec:rv}

In this section, we recall some basic definitions concerning regularly varying $\R^m$-valued random variables and we show how they are verified for the examples in this paper.

\begin{defn} \label{defn:rv}
An $\R^m$-valued random variable $X$ is
    \emph{regularly varying} with index $\beta\in\R$ if 
    there exists a Borel probability measure $\sigma$
    on
    $\bbS^{m-1}$, called the \emph{spectral measure}, such that
\[
        \lim_{t \to \infty}
        \frac{\PP(|X| > \lambda t, \ X / |X| \in E)}{\PP(|X| > t)}
        = \lambda^{\beta} \sigma(E)
\]
    for all $\lambda > 0$ and all Borel sets $E \subset \bbS^{m-1}$ with $\sigma(\partial E) = 0$.
\end{defn}

Let $X$ be integrable but not square integrable with $\E X=0$.
We suppose moreover that $X$ is regularly varying with index $\beta=-\alpha$ where $\alpha\in(1,2]$.
Taking $E=\bbS^{m-1}$, we see that $|X|$ is a scalar regularly varying
function with index $-\alpha$.
Hence there exists a slowly varying function $\ell:[0,\infty)\to(0,\infty)$
such that $\PP(|X|>t)=t^{\beta}\ell(t)$.
Define $\tilde\ell$ and $a_n$ as in Section~\ref{sec:abstract}.

For $\alpha\in(1,2)$, we have the following stable limit theorem~\cite{Rvaceva54}.
Suppose that $\E X=0$. If $X_1,X_2,\dots$ are independent copies of $X$, then 
$a_n^{-1}(X_1+\dots+X_n)\to_d X_\alpha$, where
$X_\alpha$ has characteristic function
\begin{equation} \label{eq:X}
\E\, e^{is\cdot X_\alpha}= \exp\Big\{-\int_{\bbS^{m-1}}|s\cdot x|^\alpha\Big(1-i\sgn(s\cdot x)
\tan\frac{\pi\alpha}{2}\Big)\cos\frac{\pi\alpha}{2}\,\Gamma(1-\alpha)\,d\sigma(x)\Big\}
\end{equation}
for $s\in\R^m$. 

For $\alpha=2$, the limiting random variable $X_2$ is a multivariate normal distribution with 
characteristic function
\begin{equation} \label{eq:X2}
\E\, e^{is\cdot X_\alpha}= \exp\Big\{-\int_{\bbS^{m-1}}|s\cdot x|^2
	\,d\sigma(x)\Big\} \quad\text{for $s\in\R^m$}.
\end{equation}

\begin{defn} \label{defn:nondeg}
	A regularly varying random variable $X$ with index $-\alpha$, 
	$\alpha\in(1,2]$, is \emph{nondegenerate} if
\[
\int_{\bbS^{m-1}}|s\cdot x|^\alpha\,d\sigma(x)>0 
\quad\text{for all $s\in\bbS^{m-1}$.}
\]
\end{defn}

Now we return to the setting of Section~\ref{sec:abstract} with roof function
$\tau:Y\to\R^+$ and observable $W:Y\to\R^d$.
We describe a general framework which encompasses the examples in this paper and where regular variation and nondegeneracy of the joint observable 
$\Psi=(W,\ttau):Y\to\R^{d+1}$ can be easily verified.

An intrinsic feature of our examples is
that the roof function $\tau:Y\to\R^+$ lies in $L^1$ but not $L^2$, and is regularly varying with index $-\alpha$, $\alpha\in(1,2]$. Moreover, there is a finite decomposition $Y=\dot\bigcup_{i=1}^k Y_i$ into sets of positive measure,
constants $c_i>0$, $i=1,\dots,k$, with $\sum c_i=1$,
and a slowly varying function $\ell$, such that 
\[
\mu(\tau 1_{Y_i}>t) \sim c_i \ell(t) t^{-\alpha} 
\quad\text{as $t\to\infty$}, \quad i=1,\dots,k.
\]
Furthermore, 
there exist constants $\omega_i\in\R^d$ and $Z\in L^p(Y,\R^d)$ for some $p>\alpha$ such that
\begin{equation} \label{eq:WZ}
W=\omega\tau+Z, \qquad \omega|_{Y_i}\equiv \omega_i.
\end{equation} 
Define $\tomega_i=(\omega_i,1)\in\R^{d+1}$ and
$\homega_i=\tomega_i/|\tomega_i|$, $i=1,\dots,k$.
Combining sets $Y_i$ with common values of $\omega_i$, we may suppose that $\homega_1,\dots,\homega_k$ are distinct.

\begin{prop} \label{prop:jointrv}
The joint observable $\Psi=(W,\ttau):Y\to\R^{d+1}$ is regularly varying with index $-\alpha$ and spectral measure
\[
\sigma_{d+1}=N^{-1}\sum_{i=1}^k c_i|\tomega_i|^\alpha\delta_{\homega_i}
\quad\text{where} \quad N=\sum_{i=1}^k c_i |\tomega_i|^\alpha.
\]
Moreover, $\Psi$ is nondegenerate if and only if the vectors $\tomega_1,\dots,\tomega_k$ span $\R^{d+1}$.
\end{prop}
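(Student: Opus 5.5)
The plan is to compare $\Psi$ with the simpler random vector $\Phi=\tomega\tau$, where $\tomega|_{Y_i}\equiv\tomega_i$. The remainder is
\[
R=\Psi-\Phi=(Z,-\btau),
\]
which lies in $L^p$ with $p>\alpha$. Since $|\Phi|=|\tomega_i|\tau$ on $Y_i$, the hypothesis on $\tau$ gives
\[
\mu(|\Phi|>t,\ \Phi/|\Phi|\in E)=\sum_{i:\homega_i\in E}\mu\big(\tau1_{Y_i}>t/|\tomega_i|\big)\sim \sum_{i:\homega_i\in E}c_i|\tomega_i|^\alpha\,\ell(t)t^{-\alpha}.
\]
Here we use slow variation of $\ell$ to replace $\ell(t/|\tomega_i|)$ by $\ell(t)$. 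It follows that $\mu(|\Phi|>t)\sim N\ell(t)t^{-\alpha}$. Taking the ratio and replacing $t$ by $\lambda t$ gives $\lambda^{-\alpha}\sigma_{d+1}(E)$ for every Borel $E$; no boundary condition is needed, because $\sigma_{d+1}$ is atomic with atoms $\homega_i$ and $\Phi/|\Phi|$ takes only the values $\homega_i$. Hence $\Phi$ is regularly varying with the stated spectral measure.

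Next I transfer this to $\Psi$ with a standard perturbation argument. By Markov's inequality, $\mu(|R|>\eps t)\le \eps^{-p}t^{-p}\|R\|_p^p=o(\ell(t)t^{-\alpha})$, using Potter bounds and $p>\alpha$. On the event $\{|R|\le\eps t,\ |\Phi|>\lambda t\}$, the two vectors are close in size, $|\Psi|\in[(1-\eps/\lambda)|\Phi|,(1+\eps/\lambda)|\Phi|]$, and the direction $\Psi/|\Psi|$ lies within $O(\eps/\lambda)$ of $\Phi/|\Phi|$. So for $E$ with $\sigma_{d+1}(\partial E)=0$ I sandwich
\[
\mu(|\Phi|>(\lambda+\eps) t,\ \Phi/|\Phi|\in E^{-\delta})-o(\cdot)\le \mu(|\Psi|>\lambda t,\ \Psi/|\Psi|\in E)\le \mu(|\Phi|>(\lambda-\eps)t,\ \Phi/|\Phi|\in E^{\delta})+o(\cdot).
\]
Here $E^{\pm\delta}$ are the inner and outer $\delta$-neighbourhoods. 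Since the atoms $\homega_i$ are not on $\partial E$, for small $\delta$ the sets $E^{\pm\delta}$ contain the same atoms as $E$. Letting $t\to\infty$ and then $\eps\to0$ gives $\mu(|\Psi|>\lambda t,\Psi/|\Psi|\in E)\sim \lambda^{-\alpha}N\sigma_{d+1}(E)\ell(t)t^{-\alpha}$. The case $E=\bbS^d,\ \lambda=1$ supplies the normalisation, and the claimed regular variation follows. The main care needed is this $\eps$--$\delta$ bookkeeping; everything else is routine.

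For nondegeneracy, write $\int|s\cdot x|^\alpha d\sigma_{d+1}(x)=N^{-1}\sum_i c_i|\tomega_i|^\alpha|s\cdot\homega_i|^\alpha=N^{-1}\sum_i c_i|s\cdot\tomega_i|^\alpha$. Since every $c_i>0$, this sum vanishes for a given $s\in\bbS^d$ if and only if $s\perp\tomega_i$ for all $i$. Such a unit vector $s$ exists exactly when $\tomega_1,\dots,\tomega_k$ fail to span $\R^{d+1}$, which gives the equivalence.
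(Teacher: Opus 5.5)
Your proof is correct and follows the paper's route. It uses the same splitting $\Psi=\tomega\tau+(Z,-\btau)$, the same tail computation on each $Y_i$ via slow variation of $\ell$, and the same atomic-sum argument for nondegeneracy. The one addition is that you carry out the transfer from $\tomega\tau$ to $\Psi$ explicitly (Markov's inequality with Potter's bounds, then the $\eps$--$\delta$ sandwich using that no atom $\homega_i$ lies on $\partial E$), a step the paper dismisses as ``easily seen''.
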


\begin{proof} Write $\Psi=X+\tZ$ where
$X=\tomega \tau$, 
$\tomega|_{Y_i}\equiv \tomega_i$, $\tZ=(Z,-\btau)$.
Since $\tZ\in L^p$ with $p>\alpha$, it is easily seen that $\Psi$ is regularly varying if and only if $X$ is regularly varying, in which case they have the same index and spectral measure. Hence it suffices to verify the desired properties for $X$ instead of $\Psi$.

First, we show that $X$ is regularly varying with the given spectral measure $\sigma_{d+1}$. Fix $j\in\{1,\dots,k\}$ and let $E\subset\bbS^d$ be a Borel set containing $\homega_j$ such that $\homega_i\not\in E$ for $i\neq j$.
Since the vectors $\homega_i$ are distinct, 
it suffices to show that
\begin{equation} \label{eq:X1}
        \lim_{t \to \infty}
        \frac{\mu(|X| > \lambda t, \ X / |X| \in E)}{\mu(|X| > t)}
        = \lambda^{-\alpha}N^{-1} c_j|\tomega_j|^\alpha.
\end{equation} 
Now, $X/|X|=\sum_i 1_{Y_i}\homega_i$ and hence
$X(y)/|X(y)|\in E$ if and only if $y\in Y_j$.
It follows that
\begin{align*}
\mu(|X| > \lambda t, \ X / |X| \in E) & =
\mu(y\in Y_j:|X(y)| > \lambda t)
 =\mu(y\in Y_j:|\tomega_j|\tau(y) > \lambda t)
\\ &\sim c_j \ell(\lambda t/|\tomega_j|)|\tomega_j|^\alpha \lambda^{-\alpha}t^{-\alpha}
\sim c_j \ell(t)|\tomega_j|^\alpha \lambda^{-\alpha}t^{-\alpha}
\end{align*}
as $t\to\infty$ for all $\lambda>0$.
Similarly,
\begin{align*}
\mu(|X| > t) & = \sum_i \mu(y\in Y_i: |X(y)|>t)
 = \sum_i \mu(y\in Y_i: |\tomega_i|\tau(y)>t)
 \\ &\sim \sum_i c_i\ell(t/|\tomega_i|)|\tomega_i|^\alpha t^{-\alpha}
 \sim \sum_i c_i\ell(t)|\tomega_i|^\alpha t^{-\alpha}
=N\ell(t) t^{-\alpha},
\end{align*}
and~\eqref{eq:X1} follows.

It remains to verify the statement about nondegeneracy.
We have seen that $\sigma_{d+1}=\sum_{i=1}^k e_i\delta_{\homega_i}$ where $e_i>0$.
Hence $\int_{\bbS^d}|b\cdot x|^\alpha\,d\sigma_{d+1}(x)=
\sum_{i=1}^k e_i |b\cdot\homega_i|^\alpha$. Since $e_i>0$,
degeneracy reduces to the condition that 
$b\cdot\homega_1=\cdots=b\cdot\homega_k=0$ for some $b\in\bbS^d$.
Equivalently, $\homega_1,\dots,\homega_k$ do not span $\R^{d+1}$.
\end{proof}

\begin{rmk} Similarly, $W:Y\to\R^d$ is
regularly varying with index $-\alpha$ and spectral measure
\(
\sigma_d=\big(\sum_{i=1}^k c_i |\omega_i|^\alpha\big)^{-1}\sum_{i=1}^k c_i|\omega_i|^\alpha\delta_{\omega_i/|\omega_i|}.
\)
\end{rmk}

\section{Verification of \ref{itm:N1}--\ref{itm:N4} for Gibbs-Markov semiflows}
\label{sec:GM}

In this section, we consider hypotheses \ref{itm:N1}--\ref{itm:N4} from Section~\ref{sec:abstract} for the case of Gibbs-Markov semiflows. 
In Subsection~\ref{sec:GMflows}, we recall the notions of Gibbs-Markov maps and semiflows.
In Subsection~\ref{sec:NGM}, we
show that hypotheses \ref{itm:N1}, \ref{itm:N2}, \ref{itm:N4} hold for Gibbs-Markov semiflows under a regularity assumption on $\Psi$.
Hypothesis~\ref{itm:N3} is vacuously satisfied for $d<\alpha^2$ and is relaxed somewhat for Gibbs-Markov semiflows with $d\ge\alpha^2$ in Subsection~\ref{sec:relax}. An aperiodicity assumption required for \ref{itm:N1} is shown to be typical in Subsection~\ref{sec:aperiodic}.

\subsection{Gibbs-Markov maps and semiflows}
\label{sec:GMflows}

We begin by recalling the notion of Gibbs-Markov map.
Standard references include~\cite{AD01,ADU93}.
Let $(Y,\mu)$ be a probability space with an at
most countable partition $\beta$ and let $F:Y\to Y$ be a measure-preserving map. Define the \emph{separation time} $s(y,y')$ to be the least integer $n\ge0$ such that $F^ny$ and $F^ny'$ lie in distinct partition elements. It is assumed that 
$s(y,y')=\infty$ if and only if $y=y'$, so that $d_\theta(y,y')=\theta^{s(y,y')}$ is a metric for $\theta\in(0,1)$. 
Let $\xi=\frac{d\mu}{d\mu\circ F}:Y\to\R$. We say that $F$ is a  \emph{Gibbs-Markov map} if 
\begin{itemize}

\parskip=-2pt
\item[(i)] $F$ restricts to a measure-preserving bijection from $b$ onto a union of partition elements for each $b\in\beta$,
\item[(ii)]  $\inf_{b\in\beta}\mu(Fb)>0$,
\item[(iii)] There are constants $C>0$ and $\theta\in(0,1)$ such that
$|\log \xi(y)-\log \xi(y')|\le C d_\theta(y,y')$ for all $y,y'\in b$, $b\in\beta$.
\end{itemize}
The map $(F,Y,\mu)$ is mixing (and hence ergodic) if and only if for every $b_1,b_2\in\beta$ there exists $N\ge1$ such that $F^nb_1\cap b_2\neq\emptyset$ for all $n\ge N$.

Let $(F,Y,\mu)$ be a mixing Gibbs-Markov map and $\tau:Y\to\R^+$ an integrable roof function. 
We call $(F_t,Y^\tau,\mu^\tau)$ a \emph{Gibbs-Markov semiflow}.

\subsection{Verification of \ref{itm:N1}, \ref{itm:N2} and \ref{itm:N4}}
\label{sec:NGM}

As in Section~\ref{sec:abstract}, we are interested in observables $\Psi:Y\to\R^{d+1}$. We require the following regularity condition: there exist $\theta\in(0,1)$ and $C>0$ such that
\begin{equation} \label{eq:Psireg}
|\Psi(y)-\Psi(y')|\le C (\infb |\Psi|+1)d_\theta(y,y')
\quad\text{for all $y,y'\in b$, $\,b\in\beta$.}
\end{equation}
Increasing $\theta$ if necessary, we can assume that $\theta$ in the definition of Gibbs-Markov map and in~\eqref{eq:Psireg} coincide.

To formulate local limit theorems in the context of dynamical systems, we require one further definition. 

\begin{defn} \label{defn:aperiodic} 
Let $(Y,\mu)$ be a probability space and
let $F:Y\to Y$ be a measure-preserving map.
An observable $\Psi:Y\to\R^{d+1}$ is \emph{aperiodic} if 
the functional equation $\phi\circ F=\lambda e^{is\cdot \Psi}\phi$
has no measurable solutions 
$\phi:Y\to \bbS^1$ for all
$s\in\R^{d+1}\setminus\{0\}$, $\lambda\in\bbS^1$.
\end{defn}

\begin{lemma} \label{lem:GM}
Let $(F,Y,\mu)$ be a mixing Gibbs-Markov map.
Let $\Psi:Y\to\R^{d+1}$ be an integrable observable with $\int_Y\Psi\,d\mu=0$
and satisfying~\eqref{eq:Psireg}. 

Suppose that
$\Psi$ is regularly varying with index $-\alpha$,  $\alpha\in(1,2]$, and 
spectral measure $\sigma$ as in Definition~\ref{defn:rv}.
Then
\begin{itemize}
\item[(a)]
$a_n^{-1}\Psi_n\to_d X_\alpha$, where
$X_\alpha$ is as in~\eqref{eq:X} and~\eqref{eq:X2}.
\item[(b)]
Suppose further that $\Psi$ is nondegenerate. Then 
hypothesis \ref{itm:N2} is satisfied.
\item[(c)]
Suppose further that $\Psi$ is nondegenerate and aperiodic.
Let $\cA_0$ be the collection of measurable subsets $A_0\subset Y$ with $\mu(\partial A_0)=0$.
Then hypothesis \ref{itm:N1} is satisfied with
$g_{d+1}$ being the density for the random vector $X_\alpha$ 
in part~(a).
\end{itemize}
\end{lemma}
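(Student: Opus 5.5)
The plan is to use the standard spectral (Nagaev--Guivarc'h) method for the twisted transfer operators $P_s v = P(e^{is\cdot\Psi}v)$, $s\in\R^{d+1}$. Here $P$ is the transfer operator of $F$, acting on the Banach space $\cB_\theta$ of bounded $d_\theta$-Lipschitz functions on $Y$.

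First, using~\eqref{eq:Psireg} and the Gibbs-Markov property, we show that $s\mapsto P_s$ is continuous on $\cB_\theta$ with the modulus
\[
\|P_s-P_0\|\ll \int_Y |e^{is\cdot\Psi}-1|\,d\mu + |s|\sum_b\mu(b)\,\infb|\Psi|\,\mathbf 1\{\cdots\}.
\]
The estimate is done partition element by partition element; regularity~\eqref{eq:Psireg} controls the Lipschitz seminorm of $e^{is\cdot\Psi}$ on each $b$ by $\min\{2,|s|(\infb|\Psi|+1)\}$. Since $\Psi\in L^1$ is regularly varying with $\alpha>1$, this gives $\|P_s-P_0\|\ll |s|^{\alpha}\tilde\ell(1/|s|)\cdot |s|^{-\epsilon}$, which is in particular continuous. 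For small $|s|$ there is then a leading eigenvalue $\lambda(s)$ with spectral projection $\Pi_s$. The key expansion
\[
1-\lambda(s)=\int_Y(1-e^{is\cdot\Psi})\,d\mu\,(1+o(1))
\]
is proved as in Aaronson--Denker~\cite{AD01}. The multivariate regular variation (Definition~\ref{defn:rv}) evaluates the integral as the exponent in~\eqref{eq:X} (respectively~\eqref{eq:X2} for $\alpha=2$, where truncated second moments give $\tilde\ell$), at scale $|s|^\alpha\ell(1/|s|)$. Since
\[
\E_\mu e^{is\cdot\Psi_n/a_n}=\int P^n_{s/a_n}1\,d\mu=\lambda(s/a_n)^n(1+o(1)),
\]
and $a_n^\alpha\sim n\tilde\ell(a_n)$, the characteristic functions converge. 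This proves (a).

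For (c), we Fourier-expand $\mu(A_0\cap F^{-n}B_0,\ \Psi_n\in za_n+K)$. Approximating $1_K$ by functions whose Fourier transforms have compact support, and $A_0,B_0$ by finite unions of cylinders (which is allowed since $\mu(\partial A_0)=0$), we reduce to the integral
\[
\int \hat\xi(s)e^{-is\cdot za_n}\int_Y 1_{B_0}\circ F^n\,P^n_s 1_{A_0}\,d\mu\,ds .
\]
We split this integral into three pieces:
\begin{itemize}
\item[(i)] $|s|\le\delta$: here the eigenvalue expansion, after the substitution $s\mapsto s/a_n$, yields $a_n^{-(d+1)}g_{d+1}(z)\mu(A_0)\mu(B_0)$, uniformly in $|z|\le L$. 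Nondegeneracy (Definition~\ref{defn:nondeg}) gives $|\lambda(s)|\le 1-c|s|^\alpha\ell(1/|s|)$, so that dominated convergence applies.
\item[(ii)] $\delta\le|s|\le R$: aperiodicity (Definition~\ref{defn:aperiodic}) together with quasicompactness shows that the spectral radius of $P_s$ is below $1$ uniformly on this compact range, so this piece decays exponentially.
\item[(iii)] $|s|>R$: this range is absent because $\hat\xi$ has compact support.
\end{itemize}

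For (b), the local large deviation bound \ref{itm:N2}, I follow the approach of the stable LLD results for Gibbs-Markov maps (in the style of Gou\"ezel / Melbourne--Terhesiu). We decompose according to the number of ``big jumps'' among $\Psi\circ F^j$, $j<n$, i.e.\ those with $|\Psi\circ F^j|>|z|/C$. The dominant contribution has one big jump. Bounded distortion and the Markov structure give a factor $n\,\mu(|\Psi|\approx|z|)\ll n\tilde\ell(|z|)|z|^{-\alpha}$, times a local estimate $\ll a_n^{-(d+1)}$ for the remaining sum. That local estimate comes from the LLT machinery of (c) with only nondegeneracy: a smoothed bound $\mu(\Psi_n\in B_h(x))\ll a_n^{-(d+1)}$ uniformly in $x$, which follows from integrability of $|\lambda(s)|^n$ near $0$ and a crude bound away from $0$ using a positive smoothing kernel. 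Configurations with no big jump are handled by Fuk--Nagaev/Chebyshev truncation estimates, which produce the extra factor $(|z|/a_n)^r$. Configurations with two or more big jumps are of lower order.

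The main obstacle is (b). Uniformity in $z$ and $n$ without aperiodicity requires care, as does the truncated-operator estimate for the no-big-jump event. That estimate needs a truncated twisted operator $P(1_{\{|\Psi|\le|z|/C\}}e^{is\cdot\Psi}\,\cdot)$ and bounds on its moment generating behaviour. I would first try a Chebyshev bound on $e^{\epsilon\, u\cdot\Psi_n/|z|}$ restricted to truncated jumps, using spectral perturbation of the real-twisted truncated operator.
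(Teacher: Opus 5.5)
\textbf{Parts (a) and (c).} These follow essentially the paper's route: spectral perturbation of $R(s)v=R(e^{is\cdot\Psi}v)$ on $\cF_\theta$, the eigenvalue bound from nondegeneracy (Proposition~\ref{prop:R}), aperiodicity to handle $\eps\le|s|\le M$ (Proposition~\ref{prop:ap}), Breiman-type smoothing, and approximation of $A_0$ by cylinder sets (Proposition~\ref{prop:A}). There are three small slips.
\begin{itemize}
\item $\|P_s-P_0\|$ is of exact order $|s|$, not $o(|s|)$, since $\partial_qP_s|_{s=0}v=P(i\Psi_qv)\neq0$. This is harmless, because only the Lipschitz bound is used.
\item At $\alpha=2$ the eigenvalue bound must read $|\lambda(s)|\le\exp\{-c|s|^2\tilde\ell(1/|s|)\}$ with $\tilde\ell$, not $\ell$. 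Otherwise $|\lambda(s/a_n)|^n$ is not dominated after rescaling by $a_n^2\sim n\tilde\ell(a_n)$.
\item The Fourier integrand should be $1_{B_0}P_s^n1_{A_0}$, not $1_{B_0}\circ F^n\,P_s^n1_{A_0}$. In fact no regularity of $B_0$ is needed, since the second test function may be taken in $L^1$.
\end{itemize}

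\textbf{The gap: part (b).} You leave (b) as a programme, and its key steps are not routine under \eqref{eq:Psireg}.

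First, the one-big-jump factorisation $n\,\mu(|\Psi|>|z|/C)\times a_n^{-(d+1)}$ needs a decoupling lemma that you do not supply. For a jump at time $j\ge n/2$ it is fine: bound the contribution by $\int_E\sup_xR^j1_{\{\Psi_j\in B_h(x)\}}\,d\mu$ and use $\sup_x|R^j1_{\{\Psi_j\in B_h(x)\}}|_\infty\ll a_j^{-(d+1)}$. For $j<n/2$, however, the long block lies in the future. Conditioning on the partition element $b$ of the jump does not freeze its value, because \eqref{eq:Psireg} allows $\Psi$ to oscillate on $b$ by an amount $\asymp\infb|\Psi|\asymp|z|$. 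The natural decoupling writes the smoothed probability through
\[
\int_Y R(s)^{n-j-1}R(s)\big(1_E\,R(s)^j1\big)\,d\mu ,
\]
with $E$ a union of partition elements. This costs $\|R(s)(1_E\,\cdot)\|\ll\mu(E)(1+|s|^r|z|^r)$. That cost, not the no-big-jump term, is where the factor $(1+(|z|/a_n)^r)$ in (N2) comes from. So your claim that the one-jump term is bounded without this factor is unjustified, and you have misidentified its source.

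Second, the no-big-jump term needs a Fuk--Nagaev inequality for truncated sums of the dependent process, uniform in the truncation level $|z|/C$. It then needs a further decoupling to insert the local bound. You only indicate how you would try to get this.

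Third, the ``crude bound away from $0$'' fails without aperiodicity, since $\|R(s)^n\|$ is merely bounded there. You must choose the positive kernel with $\hat k$ supported in $|s|<\eps$, e.g.\ $k=|\check\varphi|^2$ with $\varphi$ supported in a small ball, so that only small frequencies ever appear.

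\textbf{The paper's route for (b).} The paper avoids all of this. It deduces (b) from the analytic LLD of Lemma~\ref{lem:absLLD}, extending~\cite[Theorem~3.2]{MT22}. That lemma uses only the spectral gap at $s=0$ and regularity of $s\mapsto R(s)$ on $\Pi_\eps$, so it needs no aperiodicity, truncation or big jumps. The key condition is
\[
\|\partial_qR(s+h)-\partial_qR(s)\|\ll|h|^{\alpha-1}\tilde\ell(1/|h|)\big(1+|s/h|^r\big).
\]
The argument runs as follows.
\begin{itemize}
\item Integrate by parts once in $s$ against $e^{-is\cdot x}$, gaining a factor $n/|x|$.
\item Use this modulus of continuity with a shift $h\approx1/|x|$, gaining $\tilde\ell(|x|)|x|^{-(\alpha-1)}$.
\item The factor $1+|sx|^r$, integrated against $e^{-cn|s|^\alpha\tilde\ell(1/|s|)}$, gives $1+(|x|/a_n)^r$.
\end{itemize}
Theorem~\ref{thm:LLD} verifies the modulus from \eqref{eq:Psireg}. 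It raises $\theta$ above $\theta_0^r$ so that $D_{b,\theta}e^{is\cdot\Psi}\ll|s|^r(\infb|\Psi|^r+1)$, and uses the truncated-moment bound $\int_Y(|\Psi|^{1+r}+1)\min\{h(|\Psi|+1),1\}\,d\mu\ll h^{\alpha-1-r}\ell(1/h)$ for $r<\alpha-1$. Your big-jump scheme might be completable, but only after proving the two missing lemmas above; the Fourier route is the direct one.
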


\begin{proof}
Part~(a) is standard and can be found in various forms in the literature, e.g.~\cite{AD01,AD01b,Gouezel04} and~\cite[Theorem~4.2]{CFKM20}.
We omit the details since the arguments are simpler than those for part~(c).
Parts~(b) and~(c) follow from Theorems~\ref{thm:LLD} and~\ref{thm:LLT} in Appendix~\ref{app:N1N2}.
\end{proof}

Let $Y^\tau$ be the suspension over $Y$ with integrable roof function $\tau:Y\to\R^+$.
As in Section~\ref{subsec:abstract},
we define $\cA$ to be the collection of rectangles $A=A_0\times I$ in $Y^\tau$ where $A_0\in\cA_0$ and $I\subset [0,\inf_{A_0}\tau]$ is an interval.

\begin{prop} \label{prop:N4}
Suppose that $(F,Y,\mu)$ is Gibbs-Markov and that $H:Y^\tau\to\R^d$ has the property that $H|_A:A\to\R^d$ is bounded and uniformly continuous on $A$ for each $A\in\cA$. Then $H$ satisfies hypothesis \ref{itm:N4}.
\end{prop}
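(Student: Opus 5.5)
Fix $A=A_0\times I\in\cA$ with $A_0\in\cA_0$, so $\mu(\partial A_0)=0$, and fix $\delta>0$. The second requirement of \ref{itm:N4}, that $H(A)$ is bounded, is immediate from the assumption that $H|_A$ is bounded. For the first requirement, the idea is to cut $A$ into small product pieces $A_j=C_j\times I_j$. Each piece should have small diameter in a metric on $A$ with respect to which $H|_A$ is uniformly continuous. Uniform continuity then makes $\diam H(A_j)$ small. The only subtlety is to ensure that each $C_j$ still lies in $\cA_0$, i.e.\ $\mu(\partial C_j)=0$, and that the discarded part has measure less than $\delta$.

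The metric on $A$ will be the product metric $d_\theta\big((y,u),(y',u')\big)+|u-u'|$. We assume this is the metric in which uniform continuity is meant, since it is the natural one for a Gibbs-Markov semiflow. Uniform continuity gives $\eta>0$ with $|H(x)-H(x')|<\delta$ whenever the distance from $x$ to $x'$ is less than $\eta$. The steps are as follows.
\begin{enumerate}
\item Choose $N$ with $\theta^N<\eta/2$.
\item Let $\beta_N=\bigvee_{i=0}^{N-1}F^{-i}\beta$ be the partition into $N$-cylinders. Any two points of a single cylinder $c\in\beta_N$ have separation time at least $N$, hence $d_\theta$-distance at most $\theta^N<\eta/2$.
\item Since $\beta_N$ is countable, choose finitely many cylinders $c_1,\dots,c_p$ with $\mu\big(A_0\setminus\bigcup_i c_i\big)<\delta\btau/(2|I|)$.
\item Split $I$ into finitely many disjoint subintervals $I_1,\dots,I_q$, each of length less than $\eta/2$.
\item Set $A_{ij}=(A_0\cap c_i)\times I_j$.
\end{enumerate}
These sets are disjoint and contained in $A$. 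Each has diameter less than $\eta$ in the product metric, so $\diam H(A_{ij})<\delta$. The uncovered part of $A$ has $\mu^\tau$-measure at most $|I|\,\mu\big(A_0\setminus\bigcup_i c_i\big)/\btau<\delta$. Each $I_j\subset I\subset[0,\inf_{A_0}\tau]\subset[0,\inf_{A_0\cap c_i}\tau]$, so the sets $A_{ij}$ are of rectangle form.

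The main obstacle is checking that $A_0\cap c_i\in\cA_0$, i.e.\ $\mu(\partial(A_0\cap c_i))=0$. We have $\partial(A_0\cap c_i)\subset\partial A_0\cup\partial c_i$ and $\mu(\partial A_0)=0$ by hypothesis, so it suffices that $\mu(\partial c_i)=0$. For the topology induced by $d_\theta$, this is in fact trivial: a cylinder $c\in\beta_N$ is both open and closed. Indeed, $y'$ lies in the same $N$-cylinder as $y$ exactly when $s(y,y')\ge N$, i.e.\ when $d_\theta(y,y')\le\theta^N$. So $c$ is a $d_\theta$-ball, and its complement is a union of other such balls. Hence $\partial c_i=\emptyset$.

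If $\cA_0$ is instead defined via boundaries in some other ambient topology, this step would have to be replaced by a direct appeal to that topology. The plan there would be to approximate $A_0$ from inside by finite unions of $\mu$-continuity sets that are small in the chosen metric, using regularity of $\mu$. I would write the proof in the $d_\theta$ topology, in which everything closes up as above.
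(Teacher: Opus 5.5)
Your proof is correct and follows essentially the same route as the paper. You take $N$ from uniform continuity (in the same $d_\theta$-plus-Euclidean sense the paper uses), intersect $A_0$ with finitely many $N$-cylinders carrying all but a small amount of mass, cut $I$ into short subintervals, and use that cylinders are clopen so that $\partial(A_0\cap c_i)\subset\partial A_0$. One cosmetic point, shared with the paper: the pointwise bounds $|H(x)-H(x')|<\delta$ only give $\diam H(A_{ij})\le\delta$, so running the argument with $\delta/2$ gives the strict inequality.
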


\begin{proof}
Let $A=A_0\times I\in\cA$ and
$\delta>0$.
By uniform continuity of $H|_A$, there exists $N\ge1$ such that $|H(y,u)-H(y',u')|<\delta$
for all $y,y'\in A_0$, $u,u'\in I$ with $s(y,y')>N$ and $|u-u'|<1/N$.

Write $Y$ as a disjoint union of $N$-cylinders $C_1,C_2,\dots$ Since $\sum_{k=1}^\infty \mu(C_k){=1}$, there
exists $r\ge1$ such that
\[
\mu(Y\setminus {\textstyle\bigcup_{k=1}^r }C_k) < \btau \delta/|I|.
\]
Also, we can partition $I$ into a finite number of intervals $J_1,\dots, J_s$ with $|J_\ell|<1/N$.

Take $\{A_1,\dots,A_n\}=\{(A_0\cap C_k)\times J_\ell:1\le k\le r,\,1\le \ell\le s\}$.
By construction, $A_j\subset A$ and $\diam H(A_j)<\delta$ for each $j$.
Cylinder sets $C_k$ are open and closed, so $\partial C_k=\emptyset$.
Hence $\partial (A_0\cap C_k)\subset\partial A_0$ and it follows that $A_j\in\cA$ for each $j$.
Finally,
\begin{align*}
\mu^\tau\Big(A\setminus\bigcup_{j=1}^n A_j\Big)
 & =\mu^\tau\Big(A_0\times I\setminus \bigcup_{k=1}^r (A_0\cap  C_k)\times I\Big)
\\ & =\mu\Big(A_0\setminus \bigcup_{k=1}^r A_0\cap C_k\Big)|I|/\btau
 \le \mu\Big(Y\setminus \bigcup_{k=1}^r C_k\Big)|I|/\btau<\delta,
\end{align*}
completing the verification of \ref{itm:N4}.
\end{proof}

\subsection{Verification of \ref{itm:N3}}
\label{sec:relax}

Hypothesis \ref{itm:N3} is only required when $d\ge\alpha^2$.
In this subsection, we discuss how to verify this hypothesis. 
We continue to assume that $\Psi$ satisfies~\eqref{eq:Psireg}.

We assume the decomposition $W=\omega\tau+Z=\sum_i \omega_i\tau1_{Y_i}+Z$ in~\eqref{eq:WZ} with $\omega_i\in\R^d$ for $i=1,\dots,k$ and
$Z\in L^p$ for some $p>\alpha$.

\begin{prop} \label{prop:relax}
Suppose that $d\ge\alpha^2$ and define $\gamma_0$ as in \ref{itm:N3}.
Suppose that $Z\in L^2\cap L^p$ for some 
\[
p> \frac{2d}{\alpha}\,\frac{d+1-2\alpha}{d+\alpha^2-4\alpha+2}\,.\;
\footnote{Note that $\frac{d+1-2\alpha}{d+\alpha^2-4\alpha+2}<1$, so it suffices that $p>\frac{2d}{\alpha}$.}
\]
Assume that for some $\gamma\in(\gamma_0,1)$, there exists $c>0$ such that
\begin{equation} \label{eq:relax}
\mu\big(\min_{1\le m\le t^\gamma}|(\omega\tau)_m|/\tau_m < c\big) =o(a_t^{-d})
\quad\text{as $t\to\infty$}.
\end{equation}
Then \ref{itm:N3} holds.
\end{prop}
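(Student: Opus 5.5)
Fix $q\in\R^d$ and bounded $K_1\subset\R^{d+1}$, $K_2\subset\R$. The sum in \ref{itm:N3} runs over $n\le t^\gamma$. Each summand is at most $\btau^{-1}|K_2|$ times $\mu\{y:\Psi_n(y)\in(qa_t,t-n\btau)+K_1,\ \tau_{n+1}(y)>t-\sup K_2\}$, after integrating out $u$. On this event $W_n$ lies within $O(1)$ of $qa_t$, so $|W_n|\ll a_t$. The roof sum satisfies $\tau_{n+1}\ge t-O(1)$, since $u\in K_2$ is bounded and $\tau_n\le u+t<\tau_{n+1}$. Since $n\le t^\gamma=o(t)$, this means the excursion up to time $n+1$ is of size comparable to $t$ while the observable only moves by $a_t\ll t$. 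The plan is to show that this incompatibility is rare, using \eqref{eq:relax} to control the pure $\omega\tau$ part and moment bounds on $Z$ to control the remainder.

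\emph{Step 1: first-return splitting.} Write $W_m=(\omega\tau)_m+Z_m$. I will work with $m=n$ or $n+1$ (one extra step changes $W$ by $W\circ F^n$, handled the same way), or simply treat the event $\tau_n\ge t/2$ versus $\tau\circ F^n\ge t/2$ separately. Outside the exceptional set $E_t=\{\min_{1\le m\le t^\gamma}|(\omega\tau)_m|/\tau_m<c\}$, which has measure $o(a_t^{-d})$ by \eqref{eq:relax}, we have $|(\omega\tau)_m|\ge c\tau_m\gg t$. Hence on the event in question, off $E_t$, $|Z_m|\ge c\,t/2$ for $t$ large, because $|W_m|\ll a_t=o(t)$.

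\emph{Step 2: bound $\mu(\max_{m\le t^\gamma+1}|Z_m|\ge ct/2)$.} The key difficulty is that the sum over $n\le t^\gamma$ adds up to $t^\gamma$ terms. I would avoid this by bounding the union of the events directly, not summing: the events in the sum are disjoint in $n$ for fixed $(y,u)$, since the lap number is unique. So the total is at most $\mu^\tau$ of a single set, which is contained in $E_t\cup\{\max_{m\le t^\gamma+1}|Z_m|\ge ct/2\}$ up to the $u$-integration. For the second set I use $Z\in L^2\cap L^p$ and a Rosenthal/Burkholder-type maximal inequality for Gibbs-Markov maps. Subtracting the mean $m\int Z$, which is $O(t^\gamma)=o(t)$, this gives $\|\max_{m\le N}|Z_m-m\int Z|\|_p\ll N^{1/2}+N^{1/p}$ with $N=t^\gamma$. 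I would obtain this via martingale approximation, since the transfer operator has a spectral gap on the Gibbs-Markov class and \eqref{eq:Psireg} gives regularity of $Z$. Markov's inequality then gives a bound $\ll t^{-p}(t^{\gamma p/2}+t^\gamma)$.

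\emph{Step 3: exponent check (main obstacle).} We need $t^{-p(1-\gamma/2)}=o(a_t^{-d})=o(t^{-d/\alpha+\epsilon})$, i.e. $p(1-\gamma/2)>d/\alpha$. We may choose $\gamma$ just above $\gamma_0=(d-\alpha^2)/(d+1-2\alpha)$, so $1-\gamma_0/2=(d+\alpha^2-4\alpha+2)/(2(d+1-2\alpha))$. This is exactly the stated threshold $p>\frac{2d}{\alpha}\frac{d+1-2\alpha}{d+\alpha^2-4\alpha+2}$, taking $\gamma$ close enough to $\gamma_0$ while still satisfying \eqref{eq:relax}. If \eqref{eq:relax} is given only for one $\gamma$, I would note that it is monotone: it holds for smaller $\gamma'\in(\gamma_0,\gamma)$ too, since the minimum is over fewer $m$. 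The $t^{-p+\gamma}$ term is also $o(a_t^{-d})$, since $p>2d/\alpha\cdot(\dots)$ ensures $p-\gamma>d/\alpha$.

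The delicate point I expect is making the maximal moment inequality rigorous for $Z$ under only \eqref{eq:Psireg}, rather than boundedness. If that fails, I would fall back on a Doob/martingale–coboundary decomposition $Z=m+\chi\circ F-\chi$ with $m,\chi\in L^p$ and apply Burkholder to $m$.
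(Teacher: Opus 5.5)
Your overall route is the paper's. You remove the exceptional set of \eqref{eq:relax} and reduce the remainder to $\{\max_{m\le t^\gamma}|\tZ_m|\gg t\}$. You use disjointness of the lap events $\{\tau_n\le u+t<\tau_{n+1}\}$ so that no factor $t^\gamma$ is lost. You finish with a maximal $L^p$ bound $\ll t^{\gamma/2}$, Markov's inequality, and the same exponent computation. For the maximal bound the paper cites standard Gibbs--Markov moment estimates, and your martingale--coboundary argument is the usual proof of them. Your remark that \eqref{eq:relax} persists for every smaller $\gamma'\in(\gamma_0,\gamma)$ correctly justifies the paper's ``shrink $\gamma$'' step.

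One step fails as written, however. From the event you extract only $|W_n|\ll a_t$ and $\tau_{n+1}\ge t-O(1)$, and you propose to treat $m=n+1$ ``the same way''. It is not the same, because the event constrains $W_n$, not $W_{n+1}=W_n+W\circ F^n$. Consider the case $\tau_n<t/2\le\tau\circ F^n$, a single long excursion in the last step with $W\circ F^n\approx\omega\,\tau\circ F^n$. There $W_{n+1}$ is of order $t$, $|(\omega\tau)_{n+1}|\ge c\tau_{n+1}$ can hold, and $Z_{n+1}$ can be small. So the configuration lies neither in the exceptional set of \eqref{eq:relax} nor in $\{\max|\tZ_m|\gg t\}$. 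This one-big-jump regime is exactly what \ref{itm:N3} is meant to exclude, and nothing else in your proposal controls it.

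The missing observation is that the second coordinate of $\Psi_n$ is $\tau_n-n\btau$. Hence $\Psi_n\in(qa_t,t-n\btau)+K_1$ already forces $\tau_n=t+O(1)$; this is the condition $\tau_m\approx t$ in the paper's sets $I_m$, and it makes the bad case vacuous. With it, take $m=n$ only. Off the exceptional set, $|(\omega\tau)_n|\ge c\tau_n\ge ct/2$. Since $|W_n|\ll a_t$ and $n\le t^\gamma=o(t)$, this gives $|\tZ_n|\gg t$, and the rest of your argument goes through unchanged.
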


\begin{proof}
Let
$S_t=\sum_{1\le m\le t^\gamma}\mu^\tau(I_m)$ where
\[
I_m=
\Big\{(y,u)\in Y^\tau:|W_m(y)|\ll a_t,\,
\tau_m(y)\approx t,\,\tau_m(y)\le u+t<\tau_{m+1}(y),\,|u|\ll 1\Big\}.
\]
It suffices to show that $S_t=o(a_t^{-d})$.

Write $W=\omega\tau+Z=\omega\tau-\int_Y\omega\tau\,d\mu+\tZ$ where
$\tZ=Z-\int_Y Z\,d\mu$.
Then $\tZ_m=W_m-(\omega\tau)_m+m\int_Y\omega\tau\,d\mu$.
If $|(\omega\tau)_m|\ge c\tau_m$, then  
the conditions $|W_m|\ll a_t$, $\tau_m\approx t$ and $m\ll t^\gamma$ imply that $|\tZ_m|\approx t$ since $\gamma<1$. Hence $I_m\subset I_m'\cup I_m''$ where
\begin{align*}
I_m' & =
\Big\{(y,u)\in Y^\tau:|(\omega\tau)_m|< c\tau_m
,\,\tau_m(y)\le u+t<\tau_{m+1}(y),\,|u|\ll 1\Big\},
\\
I_m'' & =
\Big\{(y,u)\in Y^\tau: |\tZ_m(y)|\approx t
,\,\tau_m(y)\le u+t<\tau_{m+1}(y),\,|u|\ll 1\Big\}.
\end{align*}

The sets $\{\tau_m(y)\le u+t<\tau_{m+1}(y)\}$ are disjoint, so
\begin{align*}
\sum_{1\le m\le t^\gamma}\mu^\tau(I_m')
& 	\le \mu^\tau\Big(\min_{1\le m\le t^\gamma}|(\omega\tau)_m|/\tau_m < c,\,|u|\ll1\Big)
\\ & \ll
	\mu\Big(\min_{1\le m\le t^\gamma}|(\omega\tau)_m|/\tau_m < c\Big)
=o(a_t^{-d}).
\end{align*}

Similarly,
\[
\sum_{1\le m\le t^\gamma}\mu^\tau(I_m'')
\ll \mu\Big(\max_{1\le m\le t^\gamma}|\tZ_m|\gg t\Big).
\]
It follows from~\eqref{eq:Psireg} that $\tZ$ is piecewise H\"older.
Since  $\tZ\in L^2$,
standard moment estimates~\cite{M09b} for Gibbs-Markov maps yield
$\big|\max_{1\le m\le t}|\tZ_m|\,\big|_p\ll t^{1/2}$.
By the definitions
\[
p(1-\tfrac12 \gamma_0)=\tfrac12 p\, \frac{d+\alpha^2-4\alpha+2}{d+1-2\alpha}>\frac{d}{\alpha}.
\]
Hence we can shrink $\gamma>\gamma_0$ so that by Markov's inequality,
\[
\sum_{1\le m\le t^\gamma}\mu^\tau(I_m'')
\ll (t^\gamma)^{p/2}/t^p = t^{-p(1-\frac12\gamma)}=o(a_t^{-d}),
\]
completing the proof.
\end{proof}

We conjecture that the condition~\eqref{eq:relax} is typically satisfied, though it seems not so easy to prove.
Certainly it holds if there is a direction $e\in\bbS^{d-1}$ such that $\omega_1\cdot e,\dots,\omega_k\cdot e$ all have the same sign, which is an open and nonempty condition.

\subsection{Aperiodicity is typical}
\label{sec:aperiodic}

Let $(F,Y,\mu)$ be Gibbs-Markov.
We recall the notion of good asymptotics from~\cite{FMT07}.

\begin{defn}
Let $p_0\in Y$ be a fixed point for $F$ and let $p_n\in Y$, $n\ge1$, be a sequence of periodic points $F^np_n=p_n$.
The sequence of periodic points has \emph{good asymptotics} for $\Psi:Y\to\R^{d+1}$ if 
\begin{itemize}
\item
\(
\Psi_n(p_n)=n\Psi(p_0)+\kappa+J_n\gamma^n+o(\gamma^n)\)
as $n\to\infty$,
where $\gamma\in(0,1)$, $\kappa,\,J_n\in\R^{d+1}$.
\item For any $b\in \bbS^d$, it holds that
$b\cdot J_{n}=E_{n}\cos(n\theta+\theta_{n})$ where $E_{n}$ is a bounded sequence of real numbers with $\liminf_{n\to\infty}|E_{n}|>0$, and either (i) $\theta=\theta_{n}=0$ for all $n$, or
(ii) $\theta\in(0,\pi)$ and $\theta_{n}\in(\tilde\theta-\pi/12,\tilde\theta+\pi/12)$ for some $\tilde\theta$.
\end{itemize}
\end{defn}

\begin{prop} \label{prop:good} Assume that $\Psi:Y\to\R^{d+1}$ satisfies~\eqref{eq:Psireg}.
Let $p_n$, $n\ge1$, be a sequence of periodic points with good asymptotics for $\Psi$. Then $\Psi$ is aperiodic.
\end{prop}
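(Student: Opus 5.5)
We argue by contradiction, following the strategy of~\cite{FMT07}. Suppose $\Psi$ is not aperiodic, so there exist $s\in\R^{d+1}\setminus\{0\}$, $\lambda\in\bbS^1$ and a measurable $\phi:Y\to\bbS^1$ with $\phi\circ F=\lambda e^{is\cdot\Psi}\phi$. The first step is a regularity upgrade: using the Gibbs-Markov structure and the regularity condition~\eqref{eq:Psireg}, we show that $\phi$ has a version that is $d_\theta$-Lipschitz on each partition element (indeed globally, after a standard argument). We would do this in the usual way. Write $\phi=\lambda^{-n}e^{-is\cdot\Psi_n}\phi\circ F^n$ and apply the transfer operator $L$ of $F$ twisted by $e^{-is\cdot\Psi}$; $\phi$ is then an eigenfunction of the twisted operator $L_s$ with eigenvalue $\bar\lambda$ (modulus one). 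Iterating, $\phi=\lambda^n L_s^n\phi$. The estimate~\eqref{eq:Psireg}, combined with bounded distortion, gives a Lasota--Yorke inequality for $L_s$ on the Lipschitz space, with the factor $\inf_b|\Psi|+1$ absorbed by the exponential tails of $\mu(b)$ weights exactly as in~\cite{AD01}. Approximating $\phi$ in $L^1$ by Lipschitz functions and passing to a limit then shows $\phi$ is Lipschitz. We regard this step as a citation-level adaptation of~\cite{AD01,FMT07}.

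Once $\phi$ is continuous, evaluate the cohomological equation along periodic orbits. For $F^np=p$, iterating gives $\phi(p)=\lambda^n e^{is\cdot\Psi_n(p)}\phi(p)$. Since $|\phi|=1$, this yields $\lambda^n e^{is\cdot\Psi_n(p)}=1$. For $n=1$ at $p_0$ we get $\lambda e^{is\cdot\Psi(p_0)}=1$. Hence, for the orbits $p_n$,
\[
e^{is\cdot(\Psi_n(p_n)-n\Psi(p_0))}=1\quad\text{for all }n\ge1,
\]
so $s\cdot(\Psi_n(p_n)-n\Psi(p_0))\in2\pi\Z$. By the good asymptotics, this quantity equals $s\cdot\kappa+s\cdot J_n\gamma^n+o(\gamma^n)$, which converges to $s\cdot\kappa$. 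A convergent sequence in $2\pi\Z$ is eventually constant, so $s\cdot\kappa\in 2\pi\Z$ and, for large $n$, $s\cdot J_n\gamma^n+o(\gamma^n)=0$ exactly. Dividing by $\gamma^n$ gives $s\cdot J_n\to0$.

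Writing $b=s/|s|\in\bbS^d$, we have $b\cdot J_n=E_n\cos(n\theta+\theta_n)$, so $E_n\cos(n\theta+\theta_n)\to0$ with $\liminf|E_n|>0$. This forces $\cos(n\theta+\theta_n)\to0$. In case~(i) the cosine is identically $1$, which is a contradiction. In case~(ii) we use the standard observation from~\cite{FMT07}: for $\theta\in(0,\pi)$, the angles $n\theta+\tilde\theta$ mod $\pi$ cannot stay within $\pi/12$ of $\pi/2$ for all large $n$. Consecutive values $n\theta$ and $(n+1)\theta$ would then both lie near $\pi/2-\tilde\theta$ mod $\pi$, forcing $\theta$ to be within $\pi/6$ of $0$ mod $\pi$. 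Iterating over $k$-step differences then pushes multiples of $\theta$ out of this window, so $|\cos(n\theta+\theta_n)|$ is bounded below along a subsequence, again a contradiction. Hence $\Psi$ is aperiodic.

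The main obstacle is the first step, namely the regularity of the measurable solution $\phi$ given only the weak, unbounded-constant Lipschitz condition~\eqref{eq:Psireg}. A secondary issue is that we need continuity at the specific periodic points, so global Lipschitz regularity on cylinders (not just almost everywhere) is required. We would handle this by choosing the Lipschitz version and noting that the identity $\phi\circ F=\lambda e^{is\cdot\Psi}\phi$, holding almost everywhere between continuous functions on cylinders of positive measure, holds everywhere.
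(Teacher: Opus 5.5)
Your proof is correct, and from the evaluation at periodic points onward it coincides with the paper's. You obtain $e^{is\cdot(\Psi_n(p_n)-n\Psi(p_0))}=1$, use discreteness of $2\pi\Z$ to get $s\cdot\kappa\in 2\pi\Z$ and $s\cdot J_n\to0$, and contradict the form of $b\cdot J_n$ for $b=s/|s|$. Your case~(ii) argument fills in what the paper asserts in one sentence: every $k\theta$ would have to stay within $\pi/6$ of $\pi\Z$, which fails at the least $k$ with $k\min\{\theta,\pi-\theta\}>\pi/6$. You also note that the a.e.\ identity extends to every point of each partition element because cylinders have positive measure; the paper leaves this implicit.

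The genuine difference is the regularity step. The paper gets a version of $\phi$ continuous on partition elements directly from the Liv\v{s}ic regularity theorem for Gibbs--Markov maps \cite[Theorem~1.1]{Gouezel06}. You argue spectrally instead. The function $\phi\in L^\infty$ is an eigenfunction of the twisted transfer operator with eigenvalue of modulus one, and $R(s)$ satisfies $|R(s)^nv|_\theta\le\theta^n|v|_\theta+C(1+|s|)|v|_\infty$. Since $\lambda^nR(s)^n$ fixes $\phi$ and contracts $L^1$, applying it with $n$ large to $L^\infty$-bounded locally constant approximants of $\phi$ gives a sequence bounded in $\cF_\theta$ and converging to $\phi$ in $L^1$. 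The limit is then Lipschitz on cylinders. Your route is self-contained and uses only the operator machinery of Appendix~\ref{app:N1N2}; the paper's citation is shorter and rests on a more general theorem.

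There are two slips in your sketch.

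\textbf{Absorbing the factor in \eqref{eq:Psireg}.} The factor $\inf_b|\Psi|+1$ is not absorbed by exponential tails; there are none, since $\Psi$ is regularly varying. It is absorbed by integrability. For $n$-cylinders $a$, the inverse-branch weights are $\ll\mu(a)$ by bounded distortion and big images, and $\sum_a\mu(a)\inf_{F^ja}|\Psi|\le|\Psi|_1$. So the twist contributes $\ll|s|(|\Psi|_1+1)|v|_\infty$. This uses the standing integrability of $\Psi$ in Section~\ref{sec:GM}. Alternatively, only an $r$-th moment is needed if you use $\min\{|s||\Psi(y)-\Psi(y')|,2\}\le2|s|^r|\Psi(y)-\Psi(y')|^r$, as in the proof of Theorem~\ref{thm:LLD}.

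\textbf{Sign of the twist.} With the paper's conventions one has $R(e^{is\cdot\Psi}\phi)=\bar\lambda\phi$. Twisting by $e^{-is\cdot\Psi}$ makes $\bar\phi$ the eigenfunction instead, which is harmless.
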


\begin{proof}
Suppose that there exist $\phi:Y\to\bbS^1$ measurable, $\lambda\in\bbS^1$ and $s\in\R^{d+1}$ such that $\phi\circ F=\lambda e^{is\cdot\Psi}\phi$.
A Liv\v{s}ic regularity result for Gibbs-Markov maps~\cite[Theorem~1.1]{Gouezel06} (see also~\cite{BruinHollandNicol05}) guarantees that $\phi$ has a version that is continuous on partition elements.
By the definition of aperiodicity, Definition~\ref{defn:aperiodic},
we are required to show that $s=0$.

Note that $\phi\circ F^n=\lambda^n e^{is\cdot\Psi_n}\phi$.
In particular, we can evaluate $\phi$ at  $p_n=F^np_n$ to obtain
$\lambda^n e^{is\cdot\Psi_n(p_n)}=1$. 
Similarly, 
$\lambda e^{is\cdot\Psi(p_0)}=1$, and so 
$e^{is\cdot\{\Psi_n(p_n)-n\Psi(p_0)\}}=1$.
By good asymptotics,
\[
s\cdot (\kappa+J_n\gamma^n+o(\gamma^n))\equiv 0 \bmod 2\pi.
\]
Letting $n\to\infty$ yields $s\cdot\kappa=0\bmod 2\pi$ and 
hence
\[
s\cdot (J_n\gamma^n+o(\gamma^n))\equiv 0 \bmod 2\pi.
\]
For $n$ sufficiently large, we obtain 
$s\cdot (J_n\gamma^n+o(\gamma^n))\equiv 0$ and hence
$s\cdot J_n\to0$.
If $s\neq0$, then write
$s\cdot J_n=E_n\cos(n\theta+\theta_n)$ as in the definition of good asymptotics.
In case (i), we have $s\cdot J_n=E_n\to0$ contradicting the $\liminf$ condition on $E_n$. In case (ii), there is a subsequence $n_k$ with $\cos(n_k\theta+\theta_{n_k})$ bounded away from zero leading to the same contradiction.
Hence $s=0$ completing the proof.
\end{proof}

As shown in~\cite{FMT07}, good asymptotics holds for a $C^2$ open and $C^r$ dense of functions $\Psi:Y\to\R^{d+1}$.
We follow~\cite[Appendix~A]{MTtoral18} for an explanation specific to the Gibbs-Markov setting.
For convenience, suppose that $F:Y\to Y$ is full-branched (so $Fb=Y$ for each $b\in\beta$) and fix two partition elements $b_1$, $b_2$. We suppose that $b_0=b_1\cup b_2$ is a Riemannian manifold and that $\Psi|_{b_0}$ is $C^r$ for some $r\ge2$.
Starting with a fixed point $p_0\in b_1$ and a transverse homoclinic point 
in $b_2$, we can construct a sequence of
 periodic points $p_n\in b_1$, $p_n\to p_0$, with orbits lying in $b_1\cup b_2$.
By the construction,  good asymptotics holds except that possibly $\liminf_{n\to\infty}|E_n|=0$. By~\cite{FMT07}, $\liminf |E_n|$ depends continuously on $\Psi$ in the $C^2$ topology and is nonzero for a $C^r$ dense set of $\Psi$. Hence we obtain good asymptotics, and by Proposition~\ref{prop:good} aperiodicity, for a $C^2$ open and $C^r$ dense set of functions $\Psi$.

\section{Verification of \ref{itm:N1}--\ref{itm:N4} for Gibbs-Markov flows}
\label{sec:GM2}

In this section, we consider hypotheses \ref{itm:N1}--\ref{itm:N4} from Section~\ref{sec:abstract} for the case of Gibbs-Markov flows. 

Let $(Y,d)$ be a bounded metric space with Borel probability measure $\mu$
and let $F:Y\to Y$ be a measure-preserving transformation.
Let $(\bF,\bY,\bmu)$ be a Gibbs-Markov map
with partition $\beta=\{\bY_j\}$ and associated constant
$\theta\in(0,1)$.

We suppose that there is a measure-preserving semiconjugacy $\bar\pi:Y\to\bY$,
so $\bar\pi\circ F=\bF\circ\bar\pi$
and $\bar\pi_*\mu=\bmu$.
The separation time on $\bY$ lifts to a separation time on $Y$ given by
$s(y,y')=s(\bar\pi y,\bar\pi y')$ for $y,y'\in Y$.
Suppose that there exists $C>0$, $\theta\in(0,1)$ such that
\[
d(F^ny,F^ny')\le C(\theta^n+\theta^{s(y,y')-n})
\quad\text{for all $y,y'\in Y$, $n\ge1$}.
\]
Then we call $(F,Y,\mu)$ a \emph{two-sided Gibbs-Markov map}.

\begin{rmk}  The two-sided Gibbs-Markov maps that we have in mind are essentially the base of a two-sided Young tower as in~\cite{Young98} (except that we do not require full-branches for $\bF$). We have followed~\cite[Section~7.1]{BMT21} in omitting much of the structure often associated with Young towers, mentioning only those properties required in the sequel.
In particular, we have not mentioned a product structure, 
and
we have not made explicit the quotienting procedure (along local stable leaves) that passes from $F$ to $\bF$.

Of course, our class of two-sided Gibbs-Markov maps trivially includes the one-sided Gibbs-Markov maps considered in Section~\ref{sec:GM}. However,  the definition is flexible enough to include genuinely two-sided maps.
\end{rmk}

We consider \emph{Gibbs-Markov flows} $(F_t,Y^\tau,\mu^\tau)$, namely suspensions over a mixing two-sided Gibbs-Markov map $(F,Y,\mu)$ with integrable roof function $\tau:Y\to\R^+$.

Let $\cA_0$ be the collection of measurable subsets $A_0\subset Y$ with $\mu(\partial A_0)=0$ and let
$\cA$ consist of rectangles $A=A_0\times I\subset Y^\tau$.

\begin{lemma} \label{lem:GM2}
Let $(F_t,Y^\tau,\mu^\tau)$ be a Gibbs-Markov flow.
Given $w:Y^\tau\to\R^d$ integrable with $\int_{Y^\tau}w\,d\mu^\tau=0$,
define
$\Psi=(W,\ttau):Y\to\R^{d+1}$ and $H:Y^\tau\to\R^{d+1}$ as in Section~\ref{sec:abstract}.
Assume that $\Psi:Y\to\R^{d+1}$ is regularly varying with index $-\alpha$,  $\alpha\in(1,2]$. Suppose also
that $\Psi$ admits a decomposition of the form
\begin{equation} \label{eq:chi}
\Psi=\bPsi\circ\bar\pi+\chi\circ F-\chi,
\end{equation}
where $\bPsi:\bY\to\R^{d+1}$ satisfies~\eqref{eq:Psireg} and
$\chi\in L^\infty(Y,\R^{d+1})$. Then
\begin{itemize}
\item[(a)]
$a_n^{-1}\Psi_n\to_d X_\alpha$, where
$X_\alpha$ is as in~\eqref{eq:X} and~\eqref{eq:X2}.
\item[(b)]
Suppose further that $\Psi$ is nondegenerate. Then hypothesis~\ref{itm:N2} is satisfied.
\item[(c)]
Suppose moreover that $\Psi$ is nondegenerate and aperiodic, and that
$\chi|_A:A\to\R^{d+1}$ is uniformly continuous
for each $A\in\cA$.
Then hypothesis \ref{itm:N1} is satisfied with
$g_{d+1}$ being the density for the random vector $X_\alpha$
in part~(a).
\end{itemize}
\end{lemma}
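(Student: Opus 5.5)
The plan is to reduce everything to the one-sided Gibbs-Markov map $(\bF,\bY,\bmu)$ and the observable $\bPsi$, and then invoke Lemma~\ref{lem:GM}. The reduction rests on the decomposition~\eqref{eq:chi}, which telescopes to
\[
\Psi_n=\bPsi_n\circ\bar\pi+\chi\circ F^n-\chi, \qquad |\chi\circ F^n-\chi|\le 2|\chi|_\infty.
\]
So $\Psi_n$ and $\bPsi_n\circ\bar\pi$ differ by a uniformly bounded amount. Since $\chi$ is bounded and $\Psi$ is regularly varying with index $-\alpha$, $\bPsi\circ\bar\pi=\Psi-\chi\circ F+\chi$ is regularly varying with the same index and spectral measure. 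Because $\bar\pi_*\mu=\bmu$, the same holds for $\bPsi$ on $(\bY,\bmu)$, and likewise $\int\bPsi\,d\bmu=\int\Psi\,d\mu=0$. Nondegeneracy depends only on the spectral measure, so it transfers as well. For aperiodicity, a solution $\bar\phi\circ\bF=\lambda e^{is\cdot\bPsi}\bar\phi$ lifts to $\phi=e^{is\cdot\chi}\,\bar\phi\circ\bar\pi$, which solves the corresponding equation for $\Psi$; aperiodicity of $\Psi$ therefore implies aperiodicity of $\bPsi$.

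\textbf{Part (a).} Lemma~\ref{lem:GM}(a) gives $a_n^{-1}\bPsi_n\to_d X_\alpha$ under $\bmu$. This law is the same as that of $a_n^{-1}\bPsi_n\circ\bar\pi$ under $\mu$, and the bounded error $a_n^{-1}(\chi\circ F^n-\chi)$ tends to zero, so (a) follows.

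\textbf{Part (b).} If $\Psi_n\in B_h(z)$ then $\bPsi_n\circ\bar\pi\in B_{h+2|\chi|_\infty}(z)$. Lemma~\ref{lem:GM}(b) holds for every $h$, so applying it to $\bPsi$ with radius $h+2|\chi|_\infty$ gives \ref{itm:N2} directly.

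\textbf{Part (c).} This is the main step, because the sets $A_0,B_0\in\cA_0$ live on $Y$ and not on $\bY$, so Lemma~\ref{lem:GM}(c) cannot be quoted directly. I would use the standard approximation along stable directions. Fix $\eps>0$ and an integer $k$. Then
\[
A_0\cap F^{-n}B_0\cap\{\Psi_n\in za_n+K\}
\]
is computed by replacing $B_0$ with $F^{-k}$-images and $A_0$ with $F^{k}$-preimages of sets that are unions of cylinders. More precisely, set $n'=n-2k$ and write the event in terms of $F^{k}y$. The contraction $d(F^ny,F^ny')\le C(\theta^n+\theta^{s(y,y')-n})$, together with $\mu(\partial A_0)=\mu(\partial B_0)=0$, lets me sandwich $F^{-k}A_0$ and $F^{k}B_0$ between unions of $2k$-cylinders, which are measurable with respect to $\bar\pi$, up to errors of measure $o(1)$ as $k\to\infty$. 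On cylinder sets the problem becomes one about $\bPsi_{n'}$ on $\bY$, which is covered by Lemma~\ref{lem:GM}(c). The pieces $\Psi_k$ at the two ends are fixed, and $\chi$ at the endpoints can be treated as nearly constant on each small cylinder by the assumed uniform continuity of $\chi|_A$. Both are absorbed by shifting $z$ slightly and enlarging or shrinking $K$ to $K^{\pm\delta}$; uniformity over $|z|\le L$ in \ref{itm:N1} handles the shift. Letting $n\to\infty$, then $\delta\to0$ (using $|\partial K|=0$), then $k\to\infty$ yields \ref{itm:N1}, with $g_{d+1}$ the density of $X_\alpha$. Since $\mu(\partial A_0)=0$ does not by itself give good cylinder approximations, I would first carry out the bookkeeping using uniform continuity of $\chi$ and of the indicator approximations on the two-sided space.
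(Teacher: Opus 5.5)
Your parts (a) and (b) match the paper, as does the transfer of regular variation, nondegeneracy and aperiodicity from $\Psi$ to $\bPsi$; your lift $\phi=e^{is\cdot\chi}\,\bar\phi\circ\bar\pi$ is correct. The problem is (c). The paper does not argue there with sets and a fixed $k$; it invokes Theorem~\ref{thm:LLTtwo}. The step you pass over quickly is exactly the work that theorem does.

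First, the bookkeeping is off. The contraction estimate says $F^k$ maps a lifted $2k$-cylinder $\{y':s(y,y')\ge 2k\}$ to a set of diameter at most $2C\theta^k$. Combined with $\mu(\partial A_0)=\mu(\partial B_0)=0$ (which, contrary to your last sentence, is exactly enough), this shows that $F^{-k}A_0$ \emph{and} $F^{-k}B_0$ are well approximated by unions of lifted $2k$-cylinders. It does not apply to $F^kB_0$: sets of the form $F^{-k}\bar\pi^{-1}(\cdot)$ are $\bar\pi$-saturated and cannot approximate a general $B_0$. By invariance,
\[
\mu\big(A_0\cap F^{-n}B_0\cap\{\Psi_n\in E\}\big)=\mu\big(F^{-k}A_0\cap F^{-n}(F^{-k}B_0)\cap\{\Psi_n\circ F^k\in E\}\big).
\]
So the time stays $n$. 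On $\bY$ you must handle $\bPsi_n\circ\bF^k=\bPsi_n-\bPsi_k+\bPsi_k\circ\bF^n$, plus $\chi$ at times $k$ and $n+k$, not $\bPsi_{n-2k}$.

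Second, and more seriously, the end corrections $\bPsi_k$ and $\bPsi_k\circ\bF^n$ are fixed but unbounded. Each is nearly constant on an individual $2k$-cylinder, but the variation bound depends on $\sum_{j<k}(\inf_{b_j}|\bPsi|+1)$ for that cylinder, and the values are unbounded across cylinders. So they can be absorbed into $z\mapsto z+c/a_n$ and $K^{\pm\delta}$ only on finitely many cylinders. Your outer approximation is an infinite union of cylinders and is not covered. Lemma~\ref{lem:GM}(c) is a limit statement for fixed sets and fixed $K$. It gives no bound of the form $C\bmu(T)a_n^{-(d+1)}$ on the leftover set $T$ where the correction is large. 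For the far-end correction this would require a mixing LLT upper bound with an $L^1$ far-end weight, which is not in the statement.

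The paper avoids this by working first with dynamically H\"older test functions:
\begin{itemize}
\item it replaces $\zeta_1\circ F^k$ and $\zeta_2\circ F^{n+k}$ by $\bar\pi$-measurable approximations with error $O(\gamma^k)$, taking $k=q\log n$;
\item it pushes the near end through $R^k$;
\item it puts $e^{is\cdot\bPsi_k}$ into the far-end test function of Lemma~\ref{lem:LLT}, which needs only $\sup_s|\zeta_2(s)|_1$;
\item it then approximates sets and removes $\chi$ by Lemma~\ref{lem:vague}.
\end{itemize}

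Your black-box route can be repaired as follows.
\begin{itemize}
\item Drop $\chi$ and work with $\bPsi_n\circ\bar\pi$.
\item Use only inner approximations by finitely many good cylinders to get $\liminf$ bounds for the pairs $(A_0,B_0)$, $(Y\setminus A_0,Y)$ and $(A_0,Y\setminus B_0)$.
\item Get the matching $\limsup$ by subtracting these from the total $\bmu(\bPsi_n\in za_n+K)$, which Lemma~\ref{lem:GM}(c) with $A_0=B_0=\bY$ gives exactly.
\item Restore $\chi$ using its boundedness and uniform continuity via Lemma~\ref{lem:vague}.
\end{itemize}
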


\begin{proof}
Since $\chi$ is in $L^\infty$ and $\bar\pi$ is a measure-preserving semiconjugacy, it is immediate that $\bPsi$ is regularly varying function on $(\bY,\bmu)$ with the same index $-\alpha$  and spectral measure $\sigma$ as $\Psi$.
Moreover, $\bPsi$ is nondegenerate (resp.\ aperiodic) if and only if $\Psi$ is.

Since $\bPsi$ satisfies~\eqref{eq:Psireg}, 
Lemma~\ref{lem:GM}(a,b) applies to $\bPsi:\bY\to\R^{d+1}$ and hence to $\bPsi\circ\bar\pi:Y\to\R^{d+1}$. Since $\Psi_n=\bPsi_n\circ \bar\pi+\chi\circ F^n-\chi$,
it follows that parts~(a) and~(b) are satisfied for $\Psi:Y\to\R^{d+1}$.

Part~(c) follows by Theorem~\ref{thm:LLTtwo}.
\end{proof}
 
\begin{prop} \label{prop:N42}
Suppose that $(F,Y,\mu)$ is a two-sided Gibbs-Markov map and that $H:Y^\tau\to\R^d$ has the property that $H|_A:A\to\R^d$ is bounded and uniformly continuous on $A$ for each $A\in\cA$. Then $H$ satisfies hypothesis \ref{itm:N4}.
\end{prop}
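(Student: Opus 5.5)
The plan is to repeat the proof of Proposition~\ref{prop:N4} almost word for word, with one change: the $N$-cylinders of a one-sided Gibbs-Markov map are replaced by their lifts under $\bar\pi$. The only facts needed about cylinders are that they partition $Y$ into countably many pieces, that they have empty boundary (so intersecting with them preserves membership in $\cA_0$), and that points in the same $N$-cylinder are close for large $N$. The last point is where the two-sided structure enters.

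First fix $A=A_0\times I\in\cA$ and $\delta>0$. By uniform continuity of $H|_A$, there is $\eta>0$ such that $|H(y,u)-H(y',u')|<\delta$ whenever $y,y'\in A_0$, $u,u'\in I$, $d(y,y')<\eta$ and $|u-u'|<\eta$. Note that $H$ is uniformly continuous with respect to the metric $d$ on $Y$, not the separation time, so closeness has to be controlled through $d$.

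For $N\ge1$, let the $N$-cylinders in $Y$ be the sets $C=\bar\pi^{-1}\bar C$, where $\bar C$ runs over the $N$-cylinders of $\bF$ (intersections $\bigcap_{j=0}^{N-1}\bF^{-j}\bY_{i_j}$). If $y,y'\in C$, then $s(y,y')\ge N$. The contraction estimate with $n=\lfloor N/2\rfloor$ gives $d(F^ny,F^ny')\le C(\theta^n+\theta^{N-n})\le 2C\theta^{N/2}$. This controls $F^ny$ rather than $y$ itself, which is the main obstacle.

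To handle it, I would replace the cylinders by the sets $F^{-n}$-pulled back, i.e.\ use $D=F^n(C)$-type sets. More simply, I would take the partition $\{F^{-n}(\cdot)\}$ in reverse: use the sets $E=F^{n}\big(\bar\pi^{-1}\bar C\big)$ for $2n$-cylinders $\bar C$. Since $F$ is measure-preserving, and is invertible in the two-sided setting (as in~\cite{Young98}), these sets again partition $Y$ modulo null sets, and points of $E$ are within $2C\theta^{n}$ of each other. So the images $F^n$ of $2n$-cylinders have diameter at most $2C\theta^n<\eta$ for $n$ large.

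If invertibility is not available, I would instead argue directly with measure. Since $\mu$ is a Borel probability measure on the bounded metric space $Y$, I would approximate $A_0$ from inside by finitely many small-diameter pieces. To keep boundaries null, I would intersect $A_0$ with balls $B_\rho(x)$ whose radii $\rho$ are chosen so that $\mu(\partial B_\rho(x))=0$; all but countably many $\rho$ work. I would then disjointify these pieces using a countable dense set, which exists because $\mu$ is Borel on the relevant separable support. The boundary of each resulting piece lies in $\partial A_0$ together with finitely many null sphere boundaries, so the pieces lie in $\cA_0$. I expect this route to be the cleaner fallback.

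With the partition $\{E_k\}$ in hand, the rest is as in Proposition~\ref{prop:N4}. Choose $r$ so that $\mu\big(Y\setminus\bigcup_{k\le r}E_k\big)<\btau\delta/|I|$. Split $I$ into intervals $J_\ell$ of length less than $\eta$. Set $A_j=(A_0\cap E_k)\times J_\ell$. These are disjoint, lie in $A$, belong to $\cA$ because their boundaries are null, satisfy $\diam H(A_j)<\delta$, and cover $A$ up to measure $\delta$. Boundedness of $H(A)$ is assumed in the statement.
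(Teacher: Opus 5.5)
\textbf{Verdict: there is a gap, but your own ideas repair it.}

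Your diagnosis is correct and worth making. The lifted cylinders $\bar\pi^{-1}\bar C$ are unions of whole fibres $\bar\pi^{-1}(\bar y)$, and the contraction estimate says nothing about their $d$-diameter at time $0$. So the paper's remark that the proof is ``almost identical'' to Proposition~\ref{prop:N4} cannot mean reusing those sets literally. However, neither of your two fixes is complete as written.

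In the primary route, the step that fails is the last one: ``$A_0\cap E_k\in\cA_0$ because their boundaries are null''. In Proposition~\ref{prop:N4} this came for free, because cylinders are clopen for $d_\theta$. For $E=F^n(\bar\pi^{-1}\bar C)$ in the metric $d$, the definition of a two-sided Gibbs--Markov map gives no topological information. It assumes no continuity of $F$ or $\bar\pi$, and no invertibility of $F$, which you also need for the $E$'s to be disjoint and measurable. So $\mu(\partial E)=0$ is unsupported.

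In the fallback route, the ball construction itself is sound. You choose radii with $\mu$-null spheres, disjointify, and use $\partial P_i\subset\partial A_0\cup\bigcup_{j\le i}\{d(\cdot,x_j)=\rho_j\}$. But it rests on a countable set $\{x_i\}$ whose small balls cover $\mu$-almost all of $Y$. You simply assert this (``separable support''). Separability is not a hypothesis, and it is not automatic for a Borel probability measure on an arbitrary bounded metric space. It is exactly the kind of hidden topological assumption that part~(d) of the remark after (N4) says the formulation is meant to avoid.

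\textbf{The repair.} Use the contraction estimate only to produce separability, and use the balls to produce the pieces.
\begin{itemize}
\item For each $n\ge1$ and each of the countably many $2n$-cylinders $\bar C$ with $\bar\pi^{-1}\bar C\ne\emptyset$, pick $x_{n,\bar C}\in F^n(\bar\pi^{-1}\bar C)$. Let $D_n$ be the set of these points.
\item For $y,y'\in\bar\pi^{-1}\bar C$ you have $s(y,y')\ge 2n$, so $d(F^ny,F^ny')\le 2C\theta^n$. Hence the closed set $N_n=\{x: d(x,D_n)\le 2C\theta^n\}$ contains $F^n(Y)$.
\item Therefore $F^{-n}N_n=Y$, and $\mu(N_n)=\mu(F^{-n}N_n)=1$.
\item So $\mu$ is carried by $\bigcap_n N_n\subset\overline{\bigcup_n D_n}$, a closed separable set.
\end{itemize}
Your ball argument now runs with $\{x_i\}=\bigcup_n D_n$. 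It needs no invertibility and no boundary properties of dynamically defined sets. The rest (splitting $I$, the measure estimate, boundedness of $H(A)$) is as in Proposition~\ref{prop:N4}.

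\textbf{Comparison with the paper.} The paper's proof only refers back to Proposition~\ref{prop:N4}. The natural reading is your primary route, with cylinders replaced by sets of the form $\{F^ny':s(y,y')>2n\}$ as in the proof of Theorem~\ref{thm:LLTtwo}. The repaired argument is more robust. It uses only uniform continuity of $H|_A$ and the separability extracted above, and it sidesteps the null-boundary question for cylinder-type sets in the metric $d$, which the paper's one-line proof does not address.
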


\begin{proof}
This is almost identical to the proof of Proposition~\ref{prop:N4}.
\end{proof}

\begin{rmk} As in Section~\ref{sec:GM}, aperiodicity is typical and is a $C^2$ open $C^r$ dense condition. To see this, 
suppose that $\phi\circ F=\lambda e^{is\cdot \Psi}\phi$ with $\phi$ measurable.
Then $\tilde\phi\circ F=\lambda e^{is\cdot\bPsi\circ\bpi}\tilde\phi$
where $\tilde\phi=\phi\,e^{-is\cdot\chi}$.
Again, $\tilde\phi$ has a continuous version, and therefore so does $\phi$. Hence the existence of periodic orbits with good asymptotics for $\Psi$ rules out aperiodicity by the same argument as in Section~\ref{sec:GM}.
Provided $\Psi$ is $C^r$, $r\ge2$, on two partition elements of $Y$, good asymptotics holds for a $C^2$ and $C^r$ dense set of $\Psi$.
\end{rmk}

\section{Stable MLLT for Young tower maps}
\label{sec:YT}

The abstract stable MLLT for suspension flows in Section~\ref{sec:abstract} was implicitly formulated in the ``fully nonlattice situation'' where the joint observable $\Psi$ satisfies a nonlattice discrete time MLLT. This result was applied to $\R^d$-valued observables $w$  for flows under an aperiodicity assumption that is generically satisfied.

In the case of maps, the return time function (now denoted  by $R$ instead of $\tau$) is integer-valued. Consequently, the aperiodicity assumption in Definition~\ref{defn:aperiodic} automatically fails; similarly, when $w$ is $\Z^d$-valued.
In this section, we consider the main cases beyond the fully nonlattice case: fully lattice where $R$ is integer-valued and $w$ is $\Z^d$-valued, as well as hybrid situations where $R$ is integer-valued and $w$ is nonlattice.
(Similarly, we can deal with the hybrid case where $\tau$ is nonlattice and $w$ is lattice.)
We note that~\cite{DolgopyatNandori20} covers more general situations that are omitted here. 

In Subsection~\ref{sec:absYT}, we prove an abstract stable MLLT for maps where the observable $w$ is $\Z^d$-valued, and in Subsection~\ref{sec:hybrid} we consider the abstract hybrid case where $w$ is $\R^d$-valued. These two subsections represent the analogues for maps of the abstract flow results in Section~\ref{sec:abstract}. In Subsection~\ref{sec:GMmap}, we discuss how to verify hypotheses for Young tower maps (where the first return to the base is a Gibbs-Markov map);
this is the analogue of Sections~\ref{sec:GM} and~\ref{sec:GM2}.

\subsection{Abstract stable MLLT for tower maps and $\Z^d$-valued observables}
\label{sec:absYT}

In this subsection, we state an abstract fully lattice MLLT which is the analogue of the fully nonlattice MLLT stated in Theorem~\ref{thm:abstract}.

Let $(Y,\mu)$ be a probability space and let $F:Y\to Y$ be
a measure-preserving transformation.
Let $R:Y\to\Z^+$ be an integrable return time function and set $\bR=\int_Y R\,d\mu$.
Form the discrete suspension (tower)
\[
\Delta=\{(y,j)\in Y\times\Z:0\le j\le R(y)\}/\sim
\quad\text{where}\quad (y,R(y))\sim (Fy,0)\},
\]
with probability measure $\mu_\Delta=(\mu\times{\rm{counting}})/\bR$.
Define the measure-preserving map
$f_\Delta:\Delta\to \Delta$ given by
$f_\Delta(y,j)=(y,j+1)$ computed modulo identifications.

For $d\ge1$, let $w:\Delta\to\Z^d$ be integrable
with $\bw=\int_{\Delta}w\,d\mu_\Delta$.
Define 
\[
W:Y\to\Z^d, \qquad \Psi=(W,R):Y\to\Z^{d+1},
\]
where
\(
W(y)=\sum_{j=0}^{R(y)-1}w(y,j).
\)
Note that
$\int_Y\Psi\,d\mu=(\bw\bR,\bR)$.
We define $\hPsi^{(n)}=\Psi_n-\big[n\int_Y\Psi\,d\mu\big]$.

Let $\alpha\in(1,2]$, $\ell$, $\tilde\ell$, $a_n$ be as before. Fix a collection $\cA_0$ of measurable subsets of $Y$ and
 define $\cA=\{A_0\times\{j\}:A_0\in\cA_0,\,0\le j\le \inf_{A_0}R-1\}$.

Our main hypotheses become:
\begin{description}
\item[(L1)\label{itm:L1}] \textbf{Joint stable MLLT for $\Psi=(W,R)$.}
Let $A_0,B_0\in \cA_0$ and let $L>0$. Then 
\begin{align*}
\lim_{n\to\infty}
\sup_{\genfrac{}{}{0pt}{}{z\in\R^{d+1}}{|z|\le L}}
\Big|a_n^{d+1}\mu\big(y\in A_0\cap F^{-n}B_0:\, \hPsi^{(n)} & (y)= [za_n] \big)
\\[-2.75ex] &
- g_{d+1}(z)\mu(A_0)\mu(B_0)\Big|=0,
\end{align*}
where $g_{d+1}$ is a continuous density for a $(d+1)$-dimensional random vector.
\item[(L2)\label{itm:L2}] \textbf{Joint LLD for $\Psi=(W,R)$.}
There is a constant $C>0$ such that
\[
\mu\big(\hPsi^{(n)} =z\big)\le C \frac{n}{a_n^{d+1}}\, \frac{\tilde\ell(|z|)}{|z|^\alpha}
\quad\text{for all $n\ge1$, $z\in\Z^{d+1}$,}
\]
\item[(L3)\label{itm:L3}] \textbf{Extra assumption for $d\ge\alpha^2$.}
When $d\ge\alpha^2$, set $\gamma_0=(d-\alpha^2)/(d+1-2\alpha)\in[0,1)$.
We require that there exists $\gamma>\gamma_0$ such that
for any $q\in\R^d$, $k\in\Z$,
\begin{align*}
\sum_{1\le m\le n^\gamma}\mu\big(\Psi_m= ([qa_n]+[n\bw], n+k)\big)
=o(a_n^{-d}).
\end{align*}
\item[(L4)\label{itm:L4}] \textbf{Coboundary}
This hypothesis is unchanged from \ref{itm:N4} except for the simpler definition of $\cA$ and with
$H:\Delta\to\Z^d$ given by $H(y,j)=\sum_{k=0}^{j-1}w(y,k)$.
\end{description}

We can now state our stable MLLT for $w:\Delta\to\Z^d$.
Define the $d$-dimensional density
\begin{equation} \label{eq:gdL}
g(q)=\bR^{d/\alpha}g_d(q\bR^{1/\alpha}) 
\quad\text{where}\quad
g_d(q)=\int_{-\infty}^\infty g_{d+1}(q+\bw y,y)\,dy.
\end{equation}

 \begin{thm}[Stable MLLT for $w$]
 \label{thm:abstract-lattice}
Assume \ref{itm:L1}--\ref{itm:L4}.
 Let $A, B\in\cA$ and let $q\in\R^d$. Then 
\[
  \lim_{n\to\infty}a_n^d\mu_\Delta\Big(x\in A\cap f_\Delta^{-n}B:\, w_n(x)-[n\bw]= [qa_n]\Big)
  = g(q)\mu_\Delta(A)\mu_\Delta(B).
 \]
\end{thm}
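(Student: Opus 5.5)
We follow the proof of Theorem~\ref{thm:abstract}. First we prove a lattice analogue of Lemma~\ref{lem:abstract}, namely that
\[
a_n^d\mu_\Delta\big(x\in A\cap f_\Delta^{-n}B:\,W_{N_n(x)}(y)-[n\bw]=[qa_n]\big)\to g(q)\mu_\Delta(A)\mu_\Delta(B),
\]
where $N_n(y,j)$ is the lap number, defined by $R_{N_n}(y)\le j+n<R_{N_n+1}(y)$. We then remove the coboundary $w_n=W_{N_n}+H\circ f_\Delta^n-H$ using \ref{itm:L4}. Since $H$ is $\Z^d$-valued, \ref{itm:L4} says that after refining $A$ and $B$ into pieces $A_i,B_j\in\cA$ up to small measure, $H$ is constant on each piece, with values $h_i$ and $h'_j$. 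The event therefore becomes $W_{N_n}-[n\bw]=[qa_n]-h'_j+h_i$. The shift by the bounded lattice vector $h_i-h'_j$ is absorbed by the uniformity in $z$ and the continuity of $g$, so the result follows by summing over $i,j$ and letting $\delta\to0$. This discrete step replaces the appeal to Lemma~\ref{lem:vague}.

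For the main lemma, write $A=A_0\times\{i\}$ and $B=B_0\times\{j\}$. Then
\[
\bR\,\omega_n=\sum_{m\ge0}\mu\big(y\in A_0\cap F^{-m}B_0:\,W_m(y)-[n\bw]=[qa_n],\ R_m(y)=n+i-j\big).
\]
Hypothesis \ref{itm:L3} discards the terms with $m\le n^\gamma$, taking $\gamma=0$ when $d<\alpha^2$. In terms of $\hPsi^{(m)}=\Psi_m-[m(\bw\bR,\bR)]$, the remaining condition is
\[
\hPsi^{(m)}=\big([qa_n]+[n\bw]-[m\bw\bR],\ n+i-j-m\bR\big)+O(1).
\]
The $O(1)$ rounding discrepancies are handled by observing that the integer brackets differ by a bounded lattice vector; \ref{itm:L1} is uniform over $|z|\le L$ and $g_{d+1}$ is uniformly continuous, so a shift of $O(1)$ in the lattice point changes $z$ by $O(a_m^{-1})$ and costs nothing. (Strictly, the target must be an exact lattice point; we choose $z$ so that $[za_m]$ equals it exactly.)

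Split the sum into $|m-n/\bR|\le La_n$ and the complement. In the central range we have $a_n\sim\bR^{1/\alpha}a_m$. Set $m=n/\bR-k$. The second coordinate is then $\approx k\bR$, and the first is $qa_n+\bw(n-m\bR)\approx qa_n+\bw\bR k$. Hence \ref{itm:L1} gives the term
\[
a_m^{-(d+1)}g_{d+1}\Big(\frac{qa_n+\bw\bR k}{a_m},\frac{k\bR}{a_m}\Big)\mu(A_0)\mu(B_0).
\]
Summing over $k$ is a Riemann sum with step $\bR/a_m$ in $y=k\bR/a_m$, giving
\[
a_m^{-d}\bR^{-1}\int g_{d+1}(q\bR^{1/\alpha}+\bw y,y)\,dy=a_m^{-d}\bR^{-1}g_d(q\bR^{1/\alpha}).
\]
This explains the shear $q+\bw y$ in \eqref{eq:gdL}. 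Converting $a_m^{-d}=\bR^{d/\alpha}a_n^{-d}$ and using $\mu(A_0)\mu(B_0)=\bR^2\mu_\Delta(A)\mu_\Delta(B)$ produces exactly $g(q)\mu_\Delta(A)\mu_\Delta(B)$.

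The tail range $|m-n/\bR|>La_n$, $m>n^\gamma$, is controlled by \ref{itm:L2}. There $|z|\approx|n-m\bR|$, because the last coordinate dominates up to a constant, and the first coordinate is at most $O(|n-m\bR|+a_n)$. We then repeat Propositions~\ref{prop:1}, \ref{prop:2} and~\ref{prop:3} verbatim (with $r=0$, since \ref{itm:L2} has no correction term), using Karamata and Potter bounds. The main obstacle is the careful bookkeeping of the centring: matching $[n\bw]$ and $[qa_n]$ against $[m\int\Psi]$ so that \ref{itm:L1} applies at a genuine lattice point with $z$ bounded, and verifying that the first coordinate does not spoil the lower bound $|z|\gg|n-m\bR|$ in the tail. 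The latter is fine because $|\bw\bR k|\ll|k|$.
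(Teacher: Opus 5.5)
Your proposal is correct and follows essentially the same route as the paper: a lattice analogue of Lemma~\ref{lem:abstract} in which \ref{itm:L3} discards $m\le n^\gamma$, \ref{itm:L1} plus a Riemann sum over $|m-n/\bR|\le La_n$ produces the sheared integral defining $g_d$ in \eqref{eq:gdL}, and \ref{itm:L2} with Propositions~\ref{prop:1}--\ref{prop:3} (taking $r=0$) controls the tails. The one difference is the coboundary step. You use that $H$ is $\Z^d$-valued, so \ref{itm:L4} with $\delta<1$ makes $H$ constant on the refined pieces; this is a clean discrete substitute for the paper's appeal to Subsection~\ref{sec:pf2} and Lemma~\ref{lem:vague}. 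Like the paper's sketch, it tacitly uses the lemma for targets $[qa_n]+c$ with $c\in\Z^d$ fixed, which is harmless for \ref{itm:L1} and \ref{itm:L2} but formally requires \ref{itm:L3} for such shifted targets as well.
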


\begin{proof}
This is similar to the proof of Theorem~\ref{thm:abstract}, so we only sketch the details. (Generally, the arguments are simpler, except that we have to take care of centring.)

Write $x=(y,j)\in \Delta$. Then
$f_\Delta^nx=(F^{N_n}y,j+n-R_{N_n}(y))$ where
$N_n=N_n(x)$ is such that
$R_{N_n}(y)\le j+n<R_{N_n+1}(y)$.
Note that
\[
w_n(x)=W_{N_n(x)}(y)+H(f_\Delta^nx)-H(x).
\]

By the argument in Subsection~\ref{sec:pf2}, it suffices to prove
the following analogue of Lemma~\ref{lem:abstract}.
Let $A=A_0\times\{i\},\,
B=B_0\times\{j\}\in\cA$ and let $q\in\R^d$. 
We must show that
\begin{equation} \label{eq:absYT}
\lim_{n\to\infty} a_n^d \mu_\Delta\Big(x\in A\cap f_\Delta^{-n}B: W_{N_n(x)}(y)=[qa_n]+[n\bw]\Big)=g(q)\mu_\Delta(A)\mu_\Delta(B).
\end{equation}

Define
\[
Q_m(n)=\mu \Big(y\in A_0\cap F^{-m}B_0:
\hPsi^{(m)}(y)=([qa_n]+[n\bw]-[m\bw\bR],n-[m\bR]+k)\Big),
\]
where $k=i-j$.
Arguing as before, for $L>0$,
\[
\bR \mu_\Delta\Big(x\in A\cap f_\Delta^{-n}B: W_{N_n(x)}(y)=[qa_n]+[n\bw]\Big)=S_1(n,L)+S_2(n,L)+o(a_n^{-d})
\]
where
\[
S_1(n,L)= \sum_{\stackrel{m>n^\gamma}{|m-n/\bR|\le La_n}}Q_m(n), \qquad
S_2(n,L)= \sum_{\stackrel{m>n^\gamma}{|m-n/\bR|> La_n}}Q_m(n).
\]
Here, $\gamma\in(\gamma_0,1)$ is as in \ref{itm:L3} for $d\ge\alpha^2$ and $\gamma=0$ otherwise.

For the range of $m$ in $S_1$, we use \ref{itm:L1} to show uniform convergence
\[
a_m^{d+1}Q_m(n)=
g_{d+1}(z_{m,n})\mu(A_0)\mu(B_0)+o(1) =\bR^2
g_{d+1}(z_{m,n})\mu_\Delta(A)\mu_\Delta(B)+o(1) ,
\]
where $z_{m,n}=\Big(\dfrac{qa_n+n\bw-m\bw\bR}{a_m},\dfrac{n-m\bR}{a_m}\Big)$.
Continuing as in the proof of~\eqref{eq:S1}, we obtain that
	\begin{align*}
		& a_n^d  S_1(n,L)
		\\ &\; =\bR^{2+(d+1)/\alpha} a_n^{-1}
\sum_{|j|\le La_n}
		g_{d+1}\big(q\bR^{1/\alpha}+\bw j\bR^{1+1/\alpha}a_n^{-1},j\bR^{1+1/\alpha} a_n^{-1}\big)\mu_\Delta(A)\mu_\Delta(B)+o(1). 
\end{align*}
	Approximating Riemann integrals by Riemann sums with increments of width $\bR^{1+1/\alpha} a_n^{-1}$ yields
\begin{align*}
	\bR^{1+1/\alpha} a_n^{-1}\sum_{|j|\le La_n}
	g_{d+1}\big(q\bR^{1/\alpha}+\bw & j\bR^{1+1/\alpha} a_n^{-1},j\bR^{1+1/\alpha} a_n^{-1}\big)
 \\ & \to  
	\int_{-L\bR^{1+1/\alpha}}^{L\bR^{1+1/\alpha}} g_{d+1}(q\bR^{1/\alpha}+\bw y,y)\,dy
\end{align*}
as $n\to\infty$.
Hence
\[
\lim_{L\to\infty}\lim_{n\to\infty} a_n^d S_1(n,L)=
 \bR^{1+d/\alpha} g_d(q\bR^{1/\alpha})\mu_\Delta(A)\mu_\Delta(B)=
 \bR g(q)\mu_\Delta(A)\mu_\Delta(B).
\]

For the range of $m$ in $S_2$, we have 
\[
|([qa_n]+[n\bw]-[m\bw\bR],n-[m\bR]+k)|\approx |n-m\bR|.
\]
Proceeding as in the proof of~\eqref{eq:S2}, we obtain
that
\[
\lim_{L\to\infty}\limsup_{n\to\infty} a_n^d S_2(n,L)=0.
\]
This completes the proof of~\eqref{eq:absYT}.
\end{proof}

\begin{rmk} \label{rmk:g}
In \ref{itm:N1}, $g_{d+1}$ is the limiting density corresponding to the observable
$\Psi=(W,\ttau)$ which is centred.
Hence, $g_d$ as defined in~\eqref{eq:gd} is the limiting density corresponding to the induced observable $W=\sum_{j=0}^{\tau-1}w\circ f_\Delta^j$.

The definition of $g_d$ in~\eqref{eq:gdL} takes into account the fact that $w$ and $W$ are no longer centred. Indeed, define the induced centred observable 
$Z=\sum_{j=0}^{R-1}(w-\bw)\circ f_\Delta^j$.
In \ref{itm:L1}, $g_{d+1}$ is the limiting density corresponding to the observable
$\Psi-\bPsi$ where $\Psi=(W,R)=(Z+\bw R,R)$,
In particular, $g_d$ is the limiting density corresponding to $Z$.

Hence we see that in both \ref{itm:N1} and \ref{itm:L1}, $g_d$ is the limiting density corresponding to the induced centred observable.
\end{rmk}

\begin{rmk} More generally, we can consider $W:Y\to\R^d$ cohomologous to a lattice-valued observable where the coboundary satisfies conditions like in \ref{itm:L4}.
\end{rmk}

\subsection{Abstract stable MLLT for tower maps and $\R^d$-valued observables}
\label{sec:hybrid}

There is one further case that we highlight in this paper:
namely, the hybrid nonlattice-lattice case where $R:Y\to\Z^+$ defines a tower map $f_\Delta:\Delta\to \Delta$ and
$w:\Delta\to\R^d$ is an integrable mean zero observable.
Accordingly, define
\[
\Psi=(W,R):Y\to\R^d\times\Z, \qquad
H:\Delta\to\R^d, 
\]
given by
\[
W(y)=\sum_{j=0}^{R(y)-1}w(y,j), \qquad
H(y,j)=\sum_{k=0}^{j-1}w(y,k).
\]
We define $\cA_0$ and $\cA$ as in Subsection~\ref{sec:absYT}.
The corresponding hypotheses (NL1)--(NL4) are obvious modifications of \ref{itm:N1}--\ref{itm:N4} and \ref{itm:L1}--\ref{itm:L4}. For example, (NL1) takes the form
\begin{description}
\item[(NL1)\label{itm:NL1}] \textbf{Joint stable MLLT for $\Psi=(W,R)$.}
Let $A_0,B_0\in \cA_0$. Let $K\subset\R^d$ be bounded with $|\partial K|=0$ and let $L>0$. Then 
\begin{align*}
\lim_{n\to\infty}
\sup_{\genfrac{}{}{0pt}{}{z\in\R^d,z'\in\R}{|z|+|z'|\le L}}
\Big|a_n^{d+1}  \mu\big(y\in A_0\cap F^{-n}B_0:\, 
  \hPsi^{(n)}(y&) \in (za_n,[z'a_n])+K\times\{0\} \big)
\\ & - g_{d+1}(z,z')\mu(A_0)\mu(B_0)|K|\Big|=0,
\end{align*}
where $g_{d+1}$ is a continuous density for a $(d+1)$-dimensional random vector.
\end{description}

 \begin{thm}[Stable MLLT for $w$]
 \label{thm:abstract-hybrid}
Assume (NL1)--(NL4).
 Let $A, B\in\cA$.
Let $K\subset\R^d$ be bounded with $|\partial K|=0$ and let $q\in\R^d$. 
Then 
\[
  \lim_{n\to\infty}a_n^d\mu_\Delta\Big(x\in A\cap f_\Delta^{-n}B:\, w_n(x)\in qa_n +K\Big)
  = g(q)\mu_\Delta(A)\mu_\Delta(B)|K|. 
\]
where $g(q)$ is defined as in~\eqref{eq:gdL} (with $\bw=0$).
\qed
\end{thm}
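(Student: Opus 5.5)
The plan is to combine the proofs of Theorem~\ref{thm:abstract} and Theorem~\ref{thm:abstract-lattice}: a nonlattice local limit in the $W$-coordinate, handled as in Section~\ref{sec:abstract}, paired with a lattice local limit in the $R$-coordinate, handled as in Subsection~\ref{sec:absYT}. Write $x=(y,i)\in A=A_0\times\{i\}$ and $B=B_0\times\{j\}$, and let $N_n(x)$ be the lap number, so that $R_{N_n}(y)\le i+n<R_{N_n+1}(y)$. Then
\[
w_n(x)=W_{N_n(x)}(y)+H(f_\Delta^nx)-H(x).
\]
Since (NL4) plays the role of \ref{itm:N4}, the coboundary is removed exactly as in Subsection~\ref{sec:pf2}, by applying Lemma~\ref{lem:vague} with $H_1=-H$ and $H_2=H$. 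It therefore suffices to prove the analogue of Lemma~\ref{lem:abstract}:
\[
a_n^d\mu_\Delta\big(x\in A\cap f_\Delta^{-n}B:\,W_{N_n(x)}(y)\in qa_n+K\big)\to g(q)\mu_\Delta(A)\mu_\Delta(B)|K|.
\]

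To prove this, decompose according to $N_n=m$. The event $f_\Delta^nx\in B$ with $N_n=m$ becomes $F^my\in B_0$ together with $R_m(y)=n+i-j$. Hence
\[
\bR\,\mu_\Delta(\cdots)=\sum_{m\ge0}Q_m(n),\qquad
Q_m(n)=\mu\big(y\in A_0\cap F^{-m}B_0:\,\hPsi^{(m)}(y)\in (qa_n,\,n+k-[m\bR])+K\times\{0\}\big),
\]
where $k=i-j$ and $\bw=0$. The proof then splits into three ranges of $m$.
\begin{itemize}
\item Discard $m\le n^\gamma$ using (NL3), taking $\gamma=0$ when $d<\alpha^2$.
\item In the central range $|m-n/\bR|\le La_n$, apply (NL1) with $z=qa_n/a_m$ and $z'=(n+k-[m\bR])/a_m$. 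These are bounded, and $a_n\sim\bR^{1/\alpha}a_m$. This gives $a_m^{d+1}Q_m(n)=g_{d+1}(z_{m,n})\mu(A_0)\mu(B_0)|K|+o(1)$ uniformly.
\item The remaining range $|m-n/\bR|>La_n$ is treated below.
\end{itemize}
In the central range, the factor $|K|$ now appears explicitly and the lattice coordinate contributes no Lebesgue factor; this matches $\mu_\Delta(B)=\mu(B_0)/\bR$. The Riemann-sum computation of~\eqref{eq:S1} then applies verbatim, with increments of width $\bR^{1+1/\alpha}a_n^{-1}$, and yields $\bR\, g(q)\mu_\Delta(A)\mu_\Delta(B)|K|$, with $g$ given by~\eqref{eq:gdL} with $\bw=0$.

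For the outer range $|m-n/\bR|>La_n$, I would use (NL2), in the form $\mu(\hPsi^{(m)}\in B_h(z)\times\{z'\})\ll \frac{m}{a_m^{d+1}}\frac{\tilde\ell(|z|)}{|z|^\alpha}(1+(|z|/a_m)^r)$ with $|z|\approx|n-m\bR|$. This reproduces the bound $\tQ_m$, and Propositions~\ref{prop:1}, \ref{prop:2} and~\ref{prop:3} apply unchanged to give $\lim_L\limsup_n a_n^dS_2=0$.

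The main obstacle is bookkeeping rather than anything new. The care needed is in the integer-part centring $[m\bR]$ versus $m\bR$, and in checking that the uniformity in (NL1) over $|z|+|z'|\le L$ covers the shifted arguments. Both shifts are $O(1)$, which is $o(a_m)$, so continuity of $g_{d+1}$ absorbs them.
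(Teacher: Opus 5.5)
Your proposal is correct and takes the route the paper intends. The paper states Theorem~\ref{thm:abstract-hybrid} without a separate proof, as the evident merger of the proofs of Theorems~\ref{thm:abstract} and~\ref{thm:abstract-lattice}, and your argument is exactly that merger: the lap-number decomposition with $R_m=n+k$, discarding $m\le n^\gamma$ via (NL3), the (NL1) Riemann-sum computation of~\eqref{eq:S1} in the central range, the (NL2) bound feeding Propositions~\ref{prop:1}--\ref{prop:3} in the outer range, and removal of the coboundary by Lemma~\ref{lem:vague}.
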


\subsection{Verification of \ref{itm:L1}--\ref{itm:L4} for Young tower maps}
\label{sec:GMmap}

Let $(F,Y,\mu)$ be a (possibly two-sided) mixing Gibbs-Markov map with partition $\beta$ and associated constant $\theta\in(0,1)$.
As in Subsections~\ref{sec:absYT} and~\ref{sec:hybrid},
we consider towers $(f_\Delta,\Delta,\mu_\Delta)$ 
with integrable return time $R:Y\to\Z^+$.
In addition, we assume that $R$ is constant on elements of $\beta$.
Then $f_\Delta$ is called a \emph{Young tower map}.
The situation for Young tower maps is similar to, but simpler than, that for the Gibbs-Markov (semi)flows considered in Sections~\ref{sec:GM} and~\ref{sec:GM2}.

The aperiodicity condition in Definition~\ref{defn:aperiodic} is modified to the condition
that $\phi\circ F=\lambda e^{is\cdot\Psi}\phi$ has no measurable solutions $\phi:Y\to\bbS^1$ for all 
$s\in S\setminus\{0\}$, 
$\lambda\in\bbS^1$.
Here, we take $S= [-\pi,\pi]^{d+1}$ in the lattice case $w:\Delta\to\Z^d$, and 
$S= \R^d\times[-\pi,\pi]$ in the hybrid case $w:\Delta\to\R^d$.

We have the following analogues of Lemma~\ref{lem:GM}
and Proposition~\ref{prop:N4}. We omit the proofs which go the same way as before. For brevity, we focus on the fully lattice case.

\begin{lemma} \label{lem:YT}
Let $(F,Y,\mu)$ be a mixing Gibbs-Markov map and let
$\Psi:Y\to\Z^{d+1}$ be an integrable observable that is constant on partition elements.

Suppose that
$\Psi$ is aperiodic, nondegenerate, regularly varying with index $-\alpha$,  $\alpha\in(1,2]$. 
Then
\begin{itemize}
\item[(a)]
$a_n^{-1}\big(\Psi_n-n\int_Y\Psi\,d\mu\big)\to_d X_\alpha$, where
$X_\alpha$ is as in~\eqref{eq:X} and~\eqref{eq:X2}.
\item[(b)]
Hypothesis \ref{itm:L2} is satisfied.
\item[(c)]
Define $\cA_0$ to be the collection of measurable subsets $A_0\subset Y$ with $\mu(\partial A_0)=0$.
Then hypothesis \ref{itm:L1} is satisfied with
$g_{d+1}$ being the density for the random vector $X_\alpha$
in part~(a).\qed
\end{itemize}
\end{lemma}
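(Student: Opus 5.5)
The plan is to follow the proof of Lemma~\ref{lem:GM}, replacing nonlattice Fourier analysis with lattice Fourier analysis on the torus $[-\pi,\pi]^{d+1}$. The main tool is the family of twisted transfer operators $P_s v=P(e^{is\cdot\Psi}v)$, $s\in[-\pi,\pi]^{d+1}$, acting on the Banach space of bounded $d_\theta$-Lipschitz functions on $Y$. Since $\Psi$ is constant on partition elements, the regularity condition~\eqref{eq:Psireg} holds trivially (the left-hand side vanishes on each $b\in\beta$). The standard Gibbs-Markov estimates then give a uniform Lasota--Yorke inequality and continuity of $s\mapsto P_s$ in operator norm.

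\textbf{Step 1: spectrum near $s=0$.} Centre $\Psi$ by subtracting $\int_Y\Psi\,d\mu$. Regular variation with index $-\alpha$ gives, as in~\cite{AD01,Gouezel04}, a leading simple eigenvalue
\[
\lambda(s)=1-\Big(\int_{\bbS^d}|s\cdot x|^\alpha\big(1-i\sgn(s\cdot x)\tan\tfrac{\pi\alpha}{2}\big)\cos\tfrac{\pi\alpha}{2}\Gamma(1-\alpha)\,d\sigma(x)\Big)\ell^*(1/|s|)(1+o(1)),
\]
with the Gaussian analogue~\eqref{eq:X2} when $\alpha=2$. Here $\ell^*$ is the slowly varying function matching the normalisation $a_n$. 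Part~(a) then follows from the standard Nagaev--Guivarc'h argument, applied to $e^{-is\cdot n\int\Psi}\E(e^{is\cdot\Psi_n})=\int P_s^n1\,d\mu$ at $s/a_n$. Nondegeneracy (Definition~\ref{defn:nondeg}) makes the real part of the bracket bounded below by $c|s|^\alpha$, so $|\lambda(s)|\le 1-c|s|^\alpha\ell^*(1/|s|)$ near $0$.

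\textbf{Step 2: aperiodicity away from $0$.} The lattice version of aperiodicity is that $P_s$ has no eigenvalue on $\bbS^1$ for $s\in[-\pi,\pi]^{d+1}\setminus\{0\}$, since such an eigenfunction would solve $\phi\circ F=\lambda e^{is\cdot\Psi}\phi$. Together with quasicompactness and compactness of the torus, this gives $\|P_s^n\|\le C\rho^n$ uniformly on $\{|s|\ge\delta\}$ for some $\rho<1$.

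\textbf{Step 3: part~(c), hypothesis~\ref{itm:L1}.} Write
\[
\mu(A_0\cap F^{-n}B_0\cap\{\hPsi^{(n)}=N\})=(2\pi)^{-(d+1)}\int_{[-\pi,\pi]^{d+1}}e^{-is\cdot N}e^{-is\cdot\{n\int\Psi\}}\int_Y 1_{B_0}P_s^n(1_{A_0})\,d\mu\,ds .
\]
First handle cylinder sets $A_0,B_0$, for which $1_{A_0}$ lies in the Banach space. Split the integral into $|s|<\delta$ and $|s|\ge\delta$. The second region is exponentially small by Step~2. On the first, substitute $s\mapsto s/a_n$ and apply dominated convergence, using the bound of Step~1 together with Potter's bounds. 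This gives $a_n^{-(d+1)}\big(g_{d+1}(z)\mu(A_0)\mu(B_0)+o(1)\big)$ uniformly in $|z|\le L$ with $N=[za_n]$. Then pass to general $A_0,B_0$ with $\mu(\partial A_0)=\mu(\partial B_0)=0$ by approximating them from inside and outside by finite unions of cylinders. The upper bound on the error coming from the symmetric difference uses part~(b) in its uniform form $\mu(\hPsi^{(n)}=N)\ll a_n^{-(d+1)}$.

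\textbf{Step 4: part~(b), hypothesis~\ref{itm:L2}.} This is the main obstacle. Following Theorem~\ref{thm:LLD} in Appendix~\ref{app:N1N2}, decompose the sum $\Psi$ according to whether $|\Psi\circ F^j|$ is large compared with $|z|$. The contribution where a single big jump of size $\approx|z|$ occurs gives $n\mu(|\Psi|\approx|z|)\cdot a_n^{-(d+1)}$, i.e.\ the bound $n\tilde\ell(|z|)|z|^{-\alpha}a_n^{-(d+1)}$. To control this I would use a Markov/Gibbs-Markov decoupling, conditioning on the partition element where the jump occurs and using bounded distortion. The remaining truncated contribution is handled by a Fourier estimate on $P_s$ restricted to the truncated observable, with repeated integration by parts in $s$ to gain decay in $|z|$. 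In the lattice case these integrations by parts should be easier than in the flow case, because there is no smoothing kernel $K$ and the $(|z|/a_n)^r$ correction is unnecessary. I expect that obtaining the sharp $|z|^{-\alpha}$ decay, rather than a weaker power, from the Fourier side for $d+1$-dimensional lattice observables is where the technical difficulty lies. If that fails, I would instead adapt the direct argument of Theorem~\ref{thm:LLD} verbatim, replacing the ball $B_h(z)$ by the point $\{z\}$.
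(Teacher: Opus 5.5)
Your treatment of (a), and of (c) when $1_{A_0}\in\cF_\theta$, follows the paper's omitted proof, which goes through Theorem~\ref{thm:LLT} with $m_1=0$, $m_2=d+1$ and then Corollary~\ref{cor:LLT}. The passage to general $A_0$, however, fails as you justify it. The uniform bound $\mu(\hPsi^{(n)}=N)\ll a_n^{-(d+1)}$ controls the whole event, not its intersection with $A_0\triangle A'$. It therefore gives an error of order $a_n^{-(d+1)}$, the same order as the main term, with no factor $\mu(A_0\triangle A')$, so it cannot make the error small. Use instead the monotone sandwich of Corollary~\ref{cor:LLT}. Write $q_n(E)=a_n^{d+1}\mu(E\cap F^{-n}B_0\cap\{\hPsi^{(n)}=[za_n]\})$ and take $A_1\subset A_0\subset A_2$, where $A_1$ is a finite union of cylinders in $\Int A_0$ and $A_2$ is the \emph{complement} of a finite union of cylinders in $\Int(Y\setminus A_0)$. (A finite union of cylinders containing $A_0$ with measure close to $\mu(A_0)$ need not exist when $A_0$ meets infinitely many partition elements.) Then $1_{A_1},1_{A_2}\in\cF_\theta$, $\mu(A_2\setminus A_1)<\eps$, and $q_n(A_1)\le q_n(A_0)\le q_n(A_2)$, so your result for $A_1,A_2$ pins down the limit. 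Approximating $B_0$ is unnecessary: in $\int_Y 1_{B_0}P_s^n1_{A_0}\,d\mu$ only $1_{A_0}$ needs to lie in $\cF_\theta$ (compare Lemma~\ref{lem:LLT}, where $\zeta_2\in L^1$ suffices).

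For (b), Theorem~\ref{thm:LLD} is not a big-jump/truncation argument. It is the spectral argument of Lemma~\ref{lem:absLLD} from \cite{MT22}: one integration by parts in $s$, combined with the H\"older-type modulus (2)(iv) of $\partial_qR(s)$ and a shift by $h\approx 1/|z|$. Repeated integration by parts is unavailable for the untruncated operator, since $\Psi\notin L^2$ makes $s\mapsto R(s)$ only $C^1$. Your truncation scheme is a different and much heavier route, and you leave its key decoupling estimate for the big jump unproved. None of it is needed. With $\tPsi=\Psi-\int_Y\Psi\,d\mu$ one has $\hPsi^{(n)}=\tPsi_n+\{n\int_Y\Psi\,d\mu\}$, hence $\{\hPsi^{(n)}=z\}\subset\{\tPsi_n\in\Pi_1(z)\}$. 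Theorem~\ref{thm:LLD} then applies directly to $\tPsi$, for which~\eqref{eq:Psireg0} holds trivially. Moreover, the factor $1+(|z|/a_n)^r$ is absent from \ref{itm:L2} not because there is no smoothing kernel. In the proof of Theorem~\ref{thm:LLD} that factor comes only from the term $S$, which contains $D_{b,\theta}e^{is\cdot\Psi}$, and this vanishes identically when $\Psi$ is constant on each $b\in\beta$. Condition (2)(iv) then holds without the factor $(1+|s/h|^r)$, which is the original hypothesis of \cite{MT22}, and this gives \ref{itm:L2} exactly as stated. Your fallback applied verbatim would keep the correction factor. That is harmless for Theorem~\ref{thm:abstract-lattice}, but it is weaker than \ref{itm:L2} as stated.
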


\begin{prop} \label{prop:L4}
Suppose that $(F,Y,\mu)$ is Gibbs-Markov and that $H:\Delta\to\Z^d$ has the property that $H|_A:A\to\Z^d$ is bounded and uniformly continuous on $A$ for each $A\in\cA$. Then $H$ satisfies hypothesis \ref{itm:L4}. \qed
\end{prop}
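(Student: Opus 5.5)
The plan is to copy the proof of Proposition~\ref{prop:N4} almost verbatim, adapting it to the discrete setting where $\cA$ consists of sets $A=A_0\times\{j\}$ with $A_0\in\cA_0$ and $0\le j\le\inf_{A_0}R-1$. In this setting the $\Delta$-measure of $A$ is $\mu(A_0)/\bR$, and there is no interval $I$ to subdivide. So the only task is to split $A_0$ into finitely many pieces on which $H$ varies by less than $\delta$, up to an exceptional set of small measure.

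First, fix $A=A_0\times\{j\}\in\cA$ and $\delta>0$. Since $H|_A$ is uniformly continuous, with respect to the metric $d_\theta$ transported to $A\cong A_0$, there is $N\ge1$ such that $|H(y,j)-H(y',j)|<\delta$ whenever $y,y'\in A_0$ and $s(y,y')>N$. (In the $\Z^d$-valued case one could even take $\delta<1$, so that $H$ is constant on such pieces, but this is not needed.)

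Next, enumerate the $N$-cylinders $C_1,C_2,\dots$ of $Y$. These are countably many, disjoint, and cover $Y$ up to a null set. Choose $r$ so that $\mu(Y\setminus\bigcup_{k\le r}C_k)<\bR\delta$, and set $A_k=(A_0\cap C_k)\times\{j\}$ for $k\le r$. Any two points of $A_0\cap C_k$ have separation time greater than $N$ (after adjusting $N$ by one if needed), so $\diam H(A_k)<\delta$. The leftover set satisfies $\mu_\Delta(A\setminus\bigcup_k A_k)\le\mu(Y\setminus\bigcup_{k\le r}C_k)/\bR<\delta$.

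It remains to check that each $A_k$ lies in $\cA$. Cylinders are open and closed, so $\partial(A_0\cap C_k)\subset\partial A_0$, which has measure zero. Moreover $\inf_{A_0\cap C_k}R\ge\inf_{A_0}R>j$, so the level $j$ is admissible. The boundedness of $H(A)$ is part of the hypothesis. Taken together, these verify \ref{itm:L4}.

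The only point needing any care is the two-sided case. There the separation time is pulled back via $\bar\pi$, and one must use that cylinders in $Y$ (preimages of cylinders in $\bY$) still have null boundary and are clopen for the relevant topology. This is exactly as in Proposition~\ref{prop:N42}, so no step should present a real obstacle.
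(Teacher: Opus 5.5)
Your proposal is correct and follows the paper's intended route. The paper omits this proof, saying it goes the same way as Proposition~\ref{prop:N4}, and your adaptation is exactly that argument: you drop the subdivision of $I$, use $\mu_\Delta(A_0\times\{j\})=\mu(A_0)/\bR$, and check that level $j$ stays admissible on $A_0\cap C_k$. Your passing to $(N+1)$-cylinders, so that the separation time is strictly greater than $N$, also fixes a small imprecision in the original argument.
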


\begin{rmk} \label{rmk:relax}
The condition $d<\alpha^2$ can be relaxed as in Subsection~\ref{sec:relax}
with $S_t$ replaced by 
$S_n=\sum_{1\le m\le n^\gamma}\mu\big\{W_m=n\bw+O(a_n),\,R_m=n-k\big\}$ for $k$ fixed.
However, when $\bw{\neq0}$ in the fully lattice case, the assumption on $(\omega R)_n$ needs to be modified as follows.
 Define $Z=W-\omega R\in L^p$ and $\tZ=Z-\int_Y Z\,d\mu$.
 Then proceeding as before yields $\tZ_m=n\bw-(\omega R)_m+o(n)$ and we have to deal with the extra term $n\bw$. Assume either that there is a constant $c>|\bw|$ such that $|(\omega R)_m| \ge cR_m$ for all $m$, or alternatively that there is a constant $c\in(0,|\bw|)$ such that
$|(\omega R)_m|\le cR_m$ for all $m$. 
 Then \ref{itm:L3} holds provided $Z=W-\omega R\in L^p$ for some 
$p$ sufficiently large as before.
\end{rmk}

\begin{rmk} \label{rmk:aperiodic}
A sufficient condition for aperiodicity can be formulated as follows.
Suppose that $\phi:Y\to\bbS^1$ is measurable and satisfies $\phi\circ F=\lambda e^{is\cdot\Psi}\phi$ for some $s\in\R^{d+1}$ and $\lambda\in\bbS^1$.
By Liv\v{s}ic regularity~\cite[Theorem~1.1]{Gouezel06}, $\phi$ has a continuous version.
Let $y_1,y_2,\ldots\in \bigcup_{b\in\beta}b$ be fixed points for $F$.
Then $(\Psi(y_i)-\Psi(y_j))\cdot s\equiv0\bmod 2\pi$ for all $i,j$.
Form a $(d+1)\times(d+1)$ matrix $A\in \Z^{(d+1)\times (d+1)}$ with each row given by a vector of the form $\Psi(y_i)-\Psi(y_j)$ for some $i,j$. Then $As\equiv0\bmod (2\pi\Z)^{d+1}$. If $\det A=\pm1$, then inverting yields that $s\equiv0\bmod (2\pi\Z)^{d+1}$ and hence $\Psi$ is aperiodic.
\end{rmk}

\section{Stable MLLT for nonuniformly expanding maps and semiflows}
\label{sec:NUE}

In this section, we obtain stable MLLTs for nonuniformly expanding maps and semiflows. The maps and semiflows are defined respectively in Subsections~\ref{sec:NUEmap} and~\ref{sec:NUEflow}.
Stable MLLTs for such maps and semiflows are formulated in 
Subsection~\ref{sec:NUELLT}.

\subsection{Nonuniformly expanding maps}
\label{sec:NUEmap}

Let $T:M\to M$ be a measurable map defined on a bounded metric space
$(M,d)$.
Let $Y\subset M$ be a measurable subset supporting a Borel probability measure $\mu$. We suppose that $R:Y\to\Z^+$ is a measurable function satisfying $T^{R(y)}y\in Y$ for all $y\in Y$.
Define the induced map $F=T^R:Y\to Y$. 

We assume that 
\begin{itemize}
\item $(F,Y,\mu)$ is a mixing Gibbs-Markov map with partition $\beta$ and constant $\theta\in(0,1)$;
\item  $R$ is constant on elements of $\beta$;
\item  There is a constant $C>0$ such that $d(T^jy,T^jy')\le C d_\theta(y,y')$ for all $y,y'\in b$, $b\in\beta$, $0\le j\le R|_b$.
\end{itemize}

Using $F:Y\to Y$ and $R:Y\to\Z^+$, we define a Young tower
$(f_\Delta,\Delta,\mu_\Delta)$ as in Section~\ref{sec:GMmap}, and
the semiconjugacy
\[
\pi:\Delta\to M, \qquad \pi(y,j)=T^jy,
\]
between $f_\Delta$ and $T$.
Then $\mu_M=\pi_*\mu_\Delta$ is an ergodic
$T$-invariant probability measure on $M$. 
We say that
$(T,M,\mu_M)$ is a \emph{nonuniformly expanding map modelled by the Young tower $(f_\Delta,\Delta,\mu_\Delta)$}.

\subsection{Nonuniformly expanding flows}
\label{sec:NUEflow}

Now let $T_t:M\to M$ be a semiflow with bounded $C^\eta$ roof function
$r:X\to\R^+$ for some $\eta\in(0,1]$
and measurable Poincar\'e map $T=T_r:X\to X$ defined on some cross-section
$X\subset M$.
We suppose that $\inf_X r>0$ and 
that 
for any $K>0$ there exists $C>0$ such that
\[
d(T_tx,T_tx')\le Cd(x,x')
\quad\text{for $x,x'\in M$, $0\le t\le K$}.
\]

We require 
that the Poincar\'e map $T:X\to X$ is a nonuniformly expanding map
as in Section~\ref{sec:NUEmap}.
In particular, there is an induced mixing Gibbs-Markov map $(F,Y,\mu)$ where
$Y\subset X$,  $F=T^R:Y\to Y$ 
and $R:Y\to\Z^+$ is constant on partition elements.

Define the induced roof function
\[
\tau:Y\to\R^+, \qquad \tau=\sum_{j=0}^{R-1}r\circ T^j,
\]
and form the suspension flow $(F_t,Y^\tau,\mu^\tau)$ using
$(F,Y,\mu)$ and $\tau:Y\to\R^+$.
Then
\[
\pi:Y^\tau\to M, \qquad \pi(y,u)=T_uy,
\]
defines a semiconjugacy between $F_t$ and $T_t$, and 
$\mu_M=\pi_*\mu^\tau$ is an ergodic $T_t$-invariant probability measure on $M$.
We call $(T_t,M,\mu_M)$ a \emph{nonuniformly expanding semiflow modelled by the suspension semiflow $(F_t,Y^\tau,\mu^\tau)$}.

\subsection{Stable MLLT for nonuniformly expanding maps and flows}
\label{sec:NUELLT}

Let $(T_t,M,\mu_M)$ be a nonuniformly expanding semiflow as in Section~\ref{sec:NUEflow}.

Fix $\eta\in(0,1]$ and let $v:M\to\R^d$ be a bounded $C^\eta$ observable with $\int_M v \,d\mu_M=0$.
Define the lifted observable $w=v\circ\pi:Y^\tau\to\R^d$ and define
\[
\Psi=(W,\ttau):Y\to\R^{d+1}, \qquad
H:Y^\tau\to\R^d,
\] 
as in Section~\ref{subsec:abstract}.

\begin{lemma} \label{lem:NUEflow}
\begin{itemize}
\item[(a)] $\Psi$ satisfies~\eqref{eq:Psireg} with $\theta_1=\theta^\eta$ instead of $\theta$;
\item[(b)]  $H|_A$ is uniformly continuous for all 
subsets $A=A_0\times I\subset Y^\tau$ where $A_0\subset Y$ is measurable and
$I\subset [0,\inf_{A_0}\tau]$ is an interval.
\end{itemize}
\end{lemma}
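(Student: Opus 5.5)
Both parts come from the expansion/contraction estimates built into the definitions, combined with the H\"older property of $v$ and $r$. Throughout, $y,y'\in b$ for some $b\in\beta$, and we use $d(T^jy,T^jy')\le Cd_\theta(y,y')$ for $0\le j\le R|_b$, together with the Lipschitz property of the semiflow on bounded time intervals, $d(T_tx,T_tx')\le C_K d(x,x')$ for $0\le t\le K$, where $K=\sup r$.

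\emph{Part (a).} First treat $\ttau$. Since $R$ is constant on $b$, $\tau(y)-\tau(y')=\sum_{j=0}^{R-1}\big(r(T^jy)-r(T^jy')\big)$. Each term is bounded by $|r|_{C^\eta}C^\eta d_\theta(y,y')^\eta=O(\theta_1^{s(y,y')})$. Summing gives $|\tau(y)-\tau(y')|\ll R|_b\,d_{\theta_1}(y,y')$. Since $\inf r>0$, we have $R\le \tau/\inf r$, so $R|_b\ll\inf_b\tau+1\ll \inf_b|\Psi|+1$ (here $|\Psi|\ge|\ttau|\ge\tau-\btau$).

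Next treat $W$. Write $W(y)=\sum_{j=0}^{R-1}\int_0^{r(T^jy)}v(T_sT^jy)\,ds$. For each $j$, compare the integrals at $y$ and $y'$ in two steps:
\begin{itemize}
\item[(i)] The difference of the common part $\int_0^{\min r}$ is at most $r_{\max}|v|_{C^\eta}\big(C_Kd(T^jy,T^jy')\big)^\eta\ll d_{\theta_1}(y,y')$.
\item[(ii)] The mismatch in upper limits is at most $|v|_\infty|r(T^jy)-r(T^jy')|\ll d_{\theta_1}(y,y')$.
\end{itemize}
Summing over $j$ gives again $|W(y)-W(y')|\ll R|_b\,d_{\theta_1}(y,y')\ll(\inf_b|\Psi|+1)d_{\theta_1}(y,y')$. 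This proves~\eqref{eq:Psireg} with $\theta_1$.

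\emph{Part (b).} Let $A=A_0\times I$ with $I\subset[0,\inf_{A_0}\tau]$ a (bounded) interval, and write $H(y,u)=\int_0^u v(T_sy)\,ds$. Boundedness is clear: $|H|\le|v|_\infty|I|$. For $(y,u),(y',u')\in A$ we split
\[
|H(y,u)-H(y',u')|\le|v|_\infty|u-u'|+\int_0^{|I|}|v(T_sy)-v(T_sy')|\,ds .
\]
The first term is small when $|u-u'|$ is small. The second term is the main point, because $T_s$ is not Lipschitz uniformly in $s$ on unbounded ranges; however, here $s$ ranges only up to $\sup I<\infty$. Since $u\le\inf_{A_0}\tau$, the relevant times stay within the first excursion. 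For $y,y'$ with $s(y,y')\ge1$, hence in the same $b$, we decompose $T_sy=T_{s-\tau_j}T^jy$ with $j\le R|_b$. The pieces where the lap indices of $y$ and $y'$ disagree have total length $\ll\sum_j|r(T^jy)-r(T^jy')|$, and these contribute $O(|v|_\infty)$ times that length. The remaining pieces are controlled by the Lipschitz bound on $[0,r_{\max}]$ and H\"older continuity of $v$. Altogether we get $\ll |I|\,d_{\theta_1}(y,y')$ up to constants, uniformly on $A$.

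The main obstacle is that $A_0$ need not lie in a single partition element, so points of $A_0$ with separation time $0$ are not covered by the above estimate. Uniform continuity here is meant with respect to $d_\theta$ in the $y$-coordinate together with the usual metric in $u$. Since $d_\theta(y,y')=1$ whenever $s(y,y')=0$, such pairs are never $\delta$-close for $\delta<1$, so they impose no constraint. This is exactly the form of uniform continuity used in the proof of Proposition~\ref{prop:N4}, so the estimate above suffices.
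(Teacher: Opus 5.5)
Your part (a) follows the paper's argument. The paper packages your steps (i)--(ii) as the statement that $V=\int_0^r v\circ T_u\,du$ is $C^\eta$ on $X$, writes $W=\sum_{j<R}V\circ T^j$, and then uses $R|_b\le(\inf r)^{-1}\inf_b\tau$ exactly as you do.

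In part (b) you take a longer route than the paper, which does not decompose into Poincar\'e laps at all. The Lipschitz hypothesis on the semiflow is assumed for every $K>0$. Even a bound on $[0,\sup r]$ alone would extend to $[0,\sup I]$ by the semigroup property, with constant $C^{\lceil \sup I/\sup r\rceil}$. So taking $K=\sup I$ gives directly
\[
|H(y,u)-H(y',u)|\le K|v|_\eta\sup_{0\le t\le K}d(T_ty,T_ty')^\eta\ll d(y,y')^\eta
\quad\text{for all } y,y'\in A_0,
\]
together with $|H(y,u)-H(y,u')|\le|v|_\infty|u-u'|$. This is uniform continuity in the ambient metric $d$, with no restriction to a single partition element, so your ``main obstacle'' never arises. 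It implies the separation-time form used in the proof of Proposition~\ref{prop:N4}, because $d(y,y')\le Cd_\theta(y,y')$ for $y,y'$ in a common partition element.

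Your version proves only the $d_\theta$-form, which is exactly what Proposition~\ref{prop:N4} uses. Your observation that pairs with $s(y,y')=0$ impose no constraint is correct. Your route stays in the symbolic metric, but at the cost of lap bookkeeping that is not needed.

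If you keep your route, one step must be repaired. Write $r_j=\sum_{i<j}r\circ T^i$. On a piece where $y$ and $y'$ are in the same lap $j$ at the same global time $s$, the local flow times $s-r_j(y)$ and $s-r_j(y')$ differ. A Lipschitz bound in space alone does not compare $T_{s-r_j(y)}T^jy$ with $T_{s-r_j(y')}T^jy'$, since no regularity in time is assumed. Instead, shift the integration variable lap by lap as in part (a), and charge the shifts $|r_j(y)-r_j(y')|\le\sum_{i<j}|r(T^iy)-r(T^iy')|$ to the endpoint terms. At most $\sup I/\inf r+1$ laps occur before time $\sup I$, so this still gives $O_I(d_{\theta_1}(y,y'))$. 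Also, $|I|$ should read $\sup I$ in your bounds when $I$ does not start at $0$.
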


\begin{proof}
(a) Define
\[
V:X\to\R^d, \qquad  V=\int_0^r v\circ T_u\,du.
\]
For $x,x'\in X$ with $r(x')\le r(x)$,
\begin{align*}
|V(x)-V(x')| & \le \int_0^{r(x)}|v(T_ux)-v(T_ux')|\,du \;+\;|v|_\infty |r(x)-r(x')|
\\
& \le |v|_\eta\int_0^{|r|_\infty}d(T_ux,T_ux')^\eta\,du \;+\;|v|_\infty |r|_\eta\,d(x,x')^\eta
\\
& \le C^\eta |v|_\eta |r|_\infty\,d(x,x')^\eta \;+\;|v|_\infty |r|_\eta\,d(x,x')^\eta
\ll d(x,x')^\eta.
\end{align*}
Hence $V$ is $C^\eta$.

Next, notice that
$W=\int_0^\tau v\circ T_u\,du=\sum_{j=0}^{R-1}V\circ T^j$. Hence for $y,y'\in b$, $b\in\beta$, 
\[
|W(y)-W(y')|\le |V|_\eta \sum_{j=0}^{R(y)-1}d(T^jy,T^jy')^\eta
\ll (R|_b) d_{\theta_1}(y,y').
\]
where $\theta_1=\theta^\eta$.
Replacing $V$ by $r$ yields
\[
|\ttau(y)-\ttau(y')|= |\tau(y)-\tau(y')|\ll (R|_b) d_{\theta_1}(y,y'),
\]
so $|\Psi(y)-\Psi(y')|\ll(R|_b) d_{\theta_1}(y,y')$.
But $\tau\ge(\inf r)R$ so $R|_b\ll (\inf r)^{-1} \infb\tau$.
Hence
\[
|\Psi(y)-\Psi(y')|\ll 
(\infb\tau) d_{\theta_1}(y,y')
=(\infb\ttau+\btau) d_{\theta_1}(y,y')
\ll (\infb|\Psi|+1) d_{\theta_1}(y,y'),
\]
verifying~\eqref{eq:Psireg}.

\vspace{1ex} \noindent (b) 
Note that $H(y,u)=\int_0^uv(T_ty)\,dt$.
Set $K=\sup I$.
Then $|H(y,u)|\le K|v|_\infty$ so $H|_A$ is bounded.

For $(y,u),\,(y',u)\in A$,
\begin{align*}
|H(y,u)-H(y',u)|
& \le K\sup_{t\in[0,K]}|v(T_ty)-v(T_ty')|
\\ & \le K|v|_\eta \sup_{t\in[0,K]}d(T_ty,T_ty')^\eta
\ll d(y,y')^\eta.
\end{align*}
Also, $|H(y,u)-H(y,u')|\le |v|_\infty\, |u-u'|$.
 Hence $H|_A$ is uniformly continuous.
\end{proof}

\begin{thm} \label{thm:NUEflow}
Let $(T_t,M,\mu_M)$ be a nonuniformly expanding semiflow.
Let $v:M\to\R^d$ be a bounded H\"older observable with $\int_M v\,d\mu_M=0$.

Suppose that
$\Psi$ is aperiodic, nondegenerate, regularly varying with index $-\alpha$,  $\alpha\in(1,2]$ and spectral measure $\sigma$. Let $g_{d+1}$ be the density for the corresponding stable law $X_\alpha$ in~\eqref{eq:X} and~\eqref{eq:X2} and
define $g$ as in~\eqref{eq:gd}.
 Assume that $d<\alpha^2$.

Let $\cA$ be the collection of measurable subsets $A\subset Y$ 
where
$\mu(\partial A)=0$.

Then $v$ satisfies a stable MLLT: setting $v_t=\int_0^t v\circ T_s\,ds$,
\[
  \lim_{t\to\infty}a_t^d\mu_M\Big(x\in A\cap T_t^{-1} B:\, v_t(x)\in qa_t+K\Big)
  = g(q)\mu_M(A)\mu_M(B)|K|
 \]
 for all $A, B\in\cA$, all
 $K\subset\R^d$ bounded with $|\partial K|=0$ and all $q\in\R^d$. 
\end{thm}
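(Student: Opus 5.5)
The plan is to deduce the result from Theorem~\ref{thm:abstract}, applied to the suspension semiflow $(F_t,Y^\tau,\mu^\tau)$ that models $T_t$, and then push it down to $M$ through the semiconjugacy $\pi$. Set $w=v\circ\pi$. Because $\pi\circ F_t=T_t\circ\pi$, we have $w_t=v_t\circ\pi$. Since also $\mu_M=\pi_*\mu^\tau$, for $A,B\subset M$ we get
\[
\mu_M\big(x\in A\cap T_t^{-1}B:\,v_t(x)\in qa_t+K\big)
=\mu^\tau\big(x\in \pi^{-1}A\cap F_t^{-1}\pi^{-1}B:\,w_t(x)\in qa_t+K\big).
\]
The statement takes $\cA$ as subsets of $Y$, so I read $A,B$ as sets in the model. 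The natural interpretation is rectangles $A_0\times I$ with $\mu(\partial A_0)=0$, identified with their images under $\pi$. The goal is then exactly the conclusion of Theorem~\ref{thm:abstract} for such rectangles. If general sets are wanted, a final approximation step is needed (see the last paragraph).

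The main work is checking \ref{itm:N1}--\ref{itm:N4}.
\begin{itemize}
\item By Lemma~\ref{lem:NUEflow}(a), $\Psi$ satisfies~\eqref{eq:Psireg} with $\theta_1=\theta^\eta$. The map $F$ is mixing Gibbs-Markov and the same metric works with $\theta_1$.
\item Integrability and $\int_Y\Psi\,d\mu=0$ follow from $\int v\,d\mu_M=0$ and the definition of $\ttau$.
\item Together with the assumed regular variation, nondegeneracy and aperiodicity, Lemma~\ref{lem:GM}(b,c) gives \ref{itm:N2}, and \ref{itm:N1} with $\cA_0=\{A_0:\mu(\partial A_0)=0\}$. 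The density $g_{d+1}$ is that of $X_\alpha$ from Lemma~\ref{lem:GM}(a).
\item Since $d<\alpha^2$, \ref{itm:N3} is vacuous.
\item By Lemma~\ref{lem:NUEflow}(b), $H|_A$ is bounded and uniformly continuous for every rectangle $A$, so Proposition~\ref{prop:N4} gives \ref{itm:N4}.
\end{itemize}
Theorem~\ref{thm:abstract} now yields the limit $g(q)\mu^\tau(A)\mu^\tau(B)|K|$ with $g$ as in~\eqref{eq:gd}.

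To return to $M$, I use $\mu^\tau(\pi^{-1}E)=\mu_M(E)$ and translate the limit. The step I expect to be the main obstacle is matching the classes of sets. A rectangle $A_0\times I$ need not coincide with the full preimage $\pi^{-1}(\pi(A_0\times I))$, because $\pi$ need not be injective.

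I would handle this in one of two ways.
\begin{itemize}
\item State the result for the model sets $A=A_0\times I$ (which is what the statement's $\cA$ suggests) and note that the $\pi$-images carry the same measures.
\item For sets in $M$, approximate $1_{\pi^{-1}E}$ from above and below by finite unions of rectangles in $\cA$, with error $\delta$ in $\mu^\tau$-measure. This needs a uniform a priori bound $a_t^d\mu^\tau(\,\cdot\,\cap F_t^{-1}B:\,w_t\in qa_t+K)\ll\mu^\tau(\cdot)$ to control the remainder. The bound can be obtained from the same decomposition as in Lemma~\ref{lem:abstract}, using \ref{itm:N2} together with the upper bound in \ref{itm:N1}. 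The approximation is then concluded by letting $\delta\to0$.
\end{itemize}
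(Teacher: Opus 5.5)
Your proposal is correct and follows the paper's proof. You pass to $w=v\circ\pi$ on $Y^\tau$ via the measure-preserving semiconjugacy, obtain \ref{itm:N1} and \ref{itm:N2} from Lemma~\ref{lem:NUEflow}(a) and Lemma~\ref{lem:GM}, and \ref{itm:N4} from Lemma~\ref{lem:NUEflow}(b) and Proposition~\ref{prop:N4}, note that \ref{itm:N3} is vacuous since $d<\alpha^2$, and conclude by Theorem~\ref{thm:abstract}. The paper simply reads $A,B$ as model rectangles and does not attempt your optional passage to general sets in $M$; if you pursue it, sandwiching from both sides by finite disjoint unions of rectangles already gives the limit by monotonicity, whereas the proposed a priori bound would not follow from \ref{itm:N2}, which has no factor $\mu(E)$ localising in $y$.
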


\begin{proof}
Since $\pi:Y^\tau\to M$ is a measure-preserving semiconjugacy, it suffices to prove the MLLT for $w=v\circ\pi:Y^\tau\to\R^d$.

By Theorem~\ref{thm:abstract}, we reduce to verifying hypotheses \ref{itm:N1}--\ref{itm:N4}
for $\Psi:Y\to\R^{d+1}$ and $H:Y^\tau\to\R^d$.

By Lemma~\ref{lem:NUEflow}(a), $\Psi$ satisfies~\eqref{eq:Psireg}, so \ref{itm:N1} and \ref{itm:N2} follow from Lemma~\ref{lem:GM}.
Hypothesis~\ref{itm:N4} follows from
Lemma~\ref{lem:NUEflow}(b) and Proposition~\ref{prop:N4}.
Hypothesis~\ref{itm:N3} is vacuously satisfied since $d<\alpha^2$.
\end{proof}

\begin{rmk}  \label{rmk:N3}
The assumption that $d<\alpha^2$ can be relaxed as in Section~\ref{sec:relax}.
\end{rmk}

\begin{rmk} \label{rmk:NUEmap}
The same result and proof holds (with the obvious modifications) for nonuniformly expanding maps $(T,M,\mu_M)$ and mean zero bounded H\"older observables $v:M\to\R^d$.
The conclusion is that
\[
  \lim_{n\to\infty}a_n^d\mu_M\Big(x\in A\cap T^{-n}B:\, v_n(x)\in qa_n +K\Big)
  = g(q)\mu_M(A)\mu_M(B)|K|. 
\]

Similarly, the result holds for 
nonuniformly expanding maps $(T,M,\mu_M)$ and H\"older observables $v:M\to\Z^d$,
 but H\"older means that $v$ is bounded and constant on $T^jb$ for all $b\in\beta$ and $0\le j<R|_b$.
The conclusion is that
\[
  \lim_{n\to\infty}a_n^d\mu_M\Big(x\in A\cap T^{-n}B:\, v_n(x)-[n\bv]= [qa_n]\Big)
  = g(q)\mu_M(A)\mu_M(B).
 \]
\end{rmk}

%
%
%

\section{Applications to intermittent dynamical systems}
\label{sec:PM}

In this section, we consider applications to intermittent dynamical systems.
We consider intermittent maps in Subsection~\ref{sec:LSV} and invertible intermittent solenoids in Subsection~\ref{sec:sol}. In Subsections~\ref{sec:intflow}, we consider intermittent semiflows and flows.

\subsection{Intermittent maps}
\label{sec:LSV}

We begin by considering a class of Pomeau-Manneville intermittent maps~\cite{PomeauManneville80}.
These are maps $T:M\to M$, where $M=[0,1]$, that are uniformly expanding except for the presence of neutral fixed points.  
To satisfy nondegeneracy of $\Psi$ in Section~\ref{sec:NUELLT} (recall Definition~\ref{defn:nondeg} and Proposition~\ref{prop:jointrv}), we require the existence of at least $d+1$ neutral fixed points. To fix ideas, we consider the map $(T,M,\mu_M)$ defined in~\eqref{eq:CFKM}.
(More general classes of Thaler maps~\cite{Thaler80} with two (or more) neutral fixed points are considered in~\cite[Section~5.3]{CMsub} and work the same way.)

The analysis of this example proceeds along familiar lines.
(For further details, see~\cite[Section~6.2]{CFKM20}.)
Set $Y=[\frac13,\frac23]$ and let $R:Y\to\Z^+$ be the first return time
$R(y)=\inf\{n{\ge1}:T^ny\in Y\}$. 
Then the first return map $F=T^R:Y\to Y$ is Gibbs-Markov with partition $\beta$ given by maximal intervals on which the return time is constant. There exists a unique absolutely continuous invariant probability measure $\mu$ on $Y$ and moreover $F$ is mixing. 
Since $F'\ge3$, it is easily  verified that $|y-y'|\ll d_\theta(y,y')$ for all $\theta\in[\tfrac13,1)$.
Since $T'\ge1$, it is immediate that $|T^jy-T^jy'|\le |Fy-Fy'|\ll d_\theta(y,y')$ for
$y,y'\in b$, $b\in\beta$, $0\le j\le R|_b$.
Hence as discussed in Section~\ref{sec:NUEmap}, 
$(T,M,\mu_M)$ is a nonuniformly expanding map modelled by the Young tower $(f_\Delta,\Delta,\mu_\Delta)$ defined in Section~\ref{sec:absYT}.

The sets $Y_0=(\frac13,\frac12)$, $Y_1=(\frac12,\frac23)$ are unions of partition elements and
there is a constant $c>0$ such that $\mu(R1_{Y_i}>t)\sim ct^{-\alpha}$ as $t\to\infty$ for $i=0,1$.

Now let $v:M\to\R$ be an $\eta$-H\"older observable, $\eta\in(0,1]$,
with $\int_M v\,d\mu_M=0$.
(We focus on the hybrid case where the dynamics is discrete but observables are nonlattice. The fully lattice case where $v$ takes values in $\Z$ and is constant on a neighbourhood of each neutral fixed point and on $T^j b$ for $b\in\beta$ and $0\le j<R|_b$, is simpler.)
Define the induced observable
\(
W:Y\to\R$, $W=\sum_{j=0}^{R-1}v\circ T^j.
\)

\begin{prop} \label{prop:LSV}
Define $\omega:Y\to\R$, $\omega|_{Y_i}=v(i)$, $i=0,1$.
Then $W=\omega R+Z$ where $Z\in L^p$ for some $p>\alpha$.
Moreover, $Z\in L^\infty$ provided that $\eta\in(1/\alpha,1]$.
\end{prop}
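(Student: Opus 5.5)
We decompose $W$ on each partition element according to where the excursion goes. Fix $b\in\beta$ with $b\subset Y_0$ (the case $Y_1$ is symmetric) and write $R=R|_b$. For $y\in b$, the point $Ty$ lies in $[0,\frac13)$. The orbit $T^jy$, $1\le j\le R-1$, stays in $[0,\frac13)$ and decreases toward $0$ before escaping back to $Y$. Hence
\[
Z(y)=W(y)-v(0)R=\big(v(y)-v(0)\big)+\sum_{j=1}^{R-1}\big(v(T^jy)-v(0)\big),
\]
and by H\"older continuity
\[
|Z(y)|\le 2|v|_\infty+|v|_\eta\sum_{j=1}^{R-1}|T^jy|^\eta .
\]
The key input is the standard asymptotics for the orbit near the neutral fixed point, as in~\cite[Section~6.2]{CFKM20}. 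Near $0$ we have $Tx=x+3^{1/\alpha}x^{1+1/\alpha}$. Consequently, if $x_k$ denotes the point of $[0,\frac13)$ that takes $k$ steps to escape, then $x_k\approx k^{-\alpha}$.

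We order the excursion backwards from its exit. Then $T^jy$ is comparable to $x_{R-j}$, so $|T^jy|\ll (R-j)^{-\alpha}$, and
\[
|Z|\ll 1+\sum_{k=1}^{R}k^{-\alpha\eta}.
\]

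We now split into cases according to $\alpha\eta$.

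\textbf{Case $\alpha\eta>1$, i.e.\ $\eta>1/\alpha$.} The series converges, so $Z$ is bounded uniformly in $b$. This gives $Z\in L^\infty$.

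\textbf{Case $\alpha\eta<1$.} We get $|Z|\ll R^{1-\alpha\eta}$. Since $\mu(R>t)\ll t^{-\alpha}$, it follows that $R\in L^{q}$ for all $q<\alpha$. Therefore $Z\in L^p$ whenever $p(1-\alpha\eta)<\alpha$, that is, for all $p<\alpha/(1-\alpha\eta)$. Since $\eta>0$ we have $\alpha/(1-\alpha\eta)>\alpha$, so some $p>\alpha$ works.

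\textbf{Case $\alpha\eta=1$.} We get $|Z|\ll \log R$, which lies in every $L^p$.

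The same argument with $1-x$ and the fixed point $1$ handles $Y_1$. Together the three cases yield the proposition.

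The main point requiring care is the uniform estimate $|T^jy|\ll (R-j)^{-\alpha}$ for all $y\in b$ and all $b$. I would obtain it from the standard comparison of the recursion $x\mapsto x+cx^{1+1/\alpha}$ with the ODE $\dot x=cx^{1+1/\alpha}$. That comparison gives $x_k\sim Ck^{-\alpha}$ for the preimage sequence $x_k$ of $\frac13$ under the left branch. The estimate then follows because $T^jy\in[x_{R-j},x_{R-j-1})$.
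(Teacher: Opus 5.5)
Your proposal is correct and follows the paper's route: the H\"older bound $|Z|\ll 1+|v|_\eta\sum_j (T^jy)^\eta$, the asymptotics $T^jy\approx (R-j)^{-\alpha}$, and summation to $1+R^{1-\alpha\eta}$; the only differences are that you treat $\eta=1/\alpha$ with a $\log R$ bound where the paper simply shrinks $\eta$, and that you write out the $L^p$ integrability step. A small slip in your description: during an excursion the orbit moves \emph{away} from the neutral fixed point (it increases from near $0$ toward $\frac13$), and branches with $R=1$ never enter $[0,\frac13)$; neither point affects your estimate, which already uses $T^jy\in[x_{R-j},x_{R-j-1})$ and separates off the bounded $j=0$ term.
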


\begin{proof}
Shrinking $\eta$ if necessary, we can suppose that $\eta\neq 1/\alpha$.
We recall the standard calculation (see for example~\cite{Gouezel04}).
For $y\in Y_0$,  
\[
|Z(y)|=|W(y)-v(0)R(y)|
=\Big|\sum_{j=0}^{R(y)-1}(v(T^j y)-v(0))\Big|
\le |v|_\eta \sum_{j=0}^{R(y)-1}(T^j y)^\eta.
\]
We have $T^jy\approx (R(y)-j)^{-\alpha}$, so
\[
|Z(y)|\ll \sum_{j=1}^{R(y)}j^{-\eta\alpha}\ll 
  1+ R(y)^{1-\eta\alpha}. 
\]
Similarly for $y\in Y_1$, and the result follows.
\end{proof}

\begin{pfof}{Theorem~\ref{thm:CFKM}}
By Proposition~\ref{prop:LSV}, we can apply Proposition~\ref{prop:jointrv}
to deduce that $\Psi=(W,R)$ is regularly varying and nondegenerate. 
By assumption, $d=1<\alpha^2$ and $\Psi$ is aperiodic.
Hence we have verified the assumptions of Theorem~\ref{thm:NUEflow}
(more precisely, Remark~\ref{rmk:NUEmap}).
In this way, we obtain a stable MLLT relative to intervals $A$, 
$B\subset Y$.

For intermittent maps, it is well-known how to enlarge $Y$ to contain $[\eps,1-\eps]$ for $\eps>0$ arbitrarily small. Hence we can take 
$A$ and $B$ to be arbitrary intervals in $M$ that are bounded away from $0$ and $1$. The result is now a consequence of Lemma~\ref{lem:vague}.~
\end{pfof}

\paragraph{Thaler maps}
More generally, we can consider intermittent maps 
with $k$ neutral fixed points $x_i$, $i=1,\dots,k$, all with same exponent
$1+1/\alpha$ where $\alpha\in(1,2]$, and observables $v$ with values in $\R^d$.
For examples where there are $k$ full branches each containing one neutral fixed point, see~\cite[Section~7]{Sera20},~\cite[Section~4.1]{CMT26} or~\cite[Example~5.12]{CMsub}. (The range of $\alpha$ in these papers is different from here, but that does not affect the arguments.)

The treatment is almost identical to that for~\eqref{eq:CFKM} with obvious modifications when $d\ge2$. In particular, there is a mixing Gibbs-Markov first return map $F=T^R$ and we have a decomposition $W=\sum_{i=1}^k \omega_iR_i+Z$ where $\omega_i=v(x_i)$ and each $R_i$ is regularly varying with index $-\alpha$. Moreover, $Z$ satisfies Proposition~\ref{prop:LSV}.

To obtain nondegeneracy, we require that the vectors $\tilde\omega_i=(\omega_i,1)$ span $\R^{d+1}$, so in particular $k\ge d+1$. Also, if $d\ge\alpha^2$ then we can invoke Proposition~\ref{prop:relax} to ensure that (NL3) is satisfied.
It suffices that (i) each $\omega_i$ has positive component in say the first coordinate, so~\eqref{eq:relax} is satisfied; and (ii) $\eta$ is sufficiently close to $1$ so, by Proposition~\ref{prop:LSV}, the constraint on $p$ in Proposition~\ref{prop:relax} is satisfied.

\subsection{Intermittent solenoids}
\label{sec:sol}

A class of nonuniformly hyperbolic diffeomorphisms can be constructed
as in~\cite{AlvesAzevedo16,AlvesPinheiro08,MV16}.
 along the lines of Smale-Williams solenoids~\cite{Smale67,Williams67}.

For example, consider a Thaler map $\bT:[0,1]\to[0,1]$ with two full orientation-preserving branches 
on $[0,\frac12]$ and $[\frac12,1]$ and neutral fixed points at $0$ and $1$. 
Modify $\bT$ in a neighbourhood of $\frac12$ so that $\bT$ is $C^2$ on $(0,1)$ working~$\bmod\, 1$.
Define $M=[0,1]\times\D$ where $\D$ is a $(\dim M-1)$-dimensional disk
and define $T_0:M\to M$ by $T_0(x_1,x_2)=(\bT x_1,\tfrac12 x_2)$.
Perturb to obtain an embedding $T:M\to M$ of the form 
$T(x_1,x_2)=(\bT x_1,g(x_1,x_2))$ where $g:M\to\D$ is $C^2$ such that 
$|\partial_{x_2}g(x_1,x_2)|\le\frac34$.

Let $\bY\subset [0,1]$ be a suitable inducing set for $\bT$ with first return mixing Gibbs-Markov map $\bF=\bT^R:\bY\to\bY$.  Let
$Y=\bY\times[0,1]$. Then
$F=T^R:Y\to Y$ is a two-sided Gibbs-Markov map 
and the measure-preserving semiconjugacy $\bpi:Y\to\bY$ is simply projection onto the first coordinate.

Following Section~\ref{sec:YT}, we can use $(F,Y,\mu)$ and the first return time $R$ to build a two-sided Young tower map $(f_\Delta,\Delta,\mu_\Delta)$ which models a nonuniformly hyperbolic map $(T,M,\mu_M)$.

Let $v:M\to\R^d$ be H\"older with $\int_M v\,d\mu_M=0$ and define the lifted observable
$w=v\circ \pi:\Delta\to\R^d$.
Define $\Psi=(W,R):Y\to\R^d\times\Z$ as in Section~\ref{sec:YT}. 
For typical $v$, we obtain that $\Psi$ is regularly varying, nondegenerate, and aperiodic.

By Section~\ref{sec:hybrid}, to prove a stable MLLT for $v$, it suffices to verify hypotheses~\ref{itm:NL1}--(NL4).
Here the exponential contraction along stable leaves is crucial.
By the standard Sinai argument, we can write
$\Psi=\bPsi\circ\bpi+\chi\circ F-\chi$ where $\bPsi:\bY\to\R^d\times\Z$ satisfies~\eqref{eq:Psireg} and $\chi:Y\to\R^d\times\Z$ is uniformly continuous.
Hypotheses~\ref{itm:NL1}, (NL2) and (NL4) follow readily.
As usual, hypothesis~(NL3) is vacuously satisfied if $d<\alpha^2$ and is also satisfied if for instance $v(0)$ and $v(1)$ both have positive first component and $\eta$ is sufficiently close to $1$.

The construction can be modified so that stable leaves are not exponentially contracting (see for instance~\cite[Example~4.2]{MV16}).
If we are in the fully lattice case $v:M\to\Z^d$, then $\Psi$ is automatically constant on stable leaves and we can take $\chi=0$. More generally, it suffices that $v\circ\pi=\hat w+\chi\circ f_\Delta-\chi$ where $\hat w$ takes values in $\Z^d$ and $\chi:\Delta\to\R^d$ is uniformly continuous.

However, without assuming exponential contraction along stable leaves, we do not know how to treat general H\"older observables.
The situation was discussed in detail in~\cite[Section~8]{BBM19} and similar considerations apply here.

\subsection{Intermittent semiflows and flows}
\label{sec:intflow}

Now let $(T_t,M,\mu_M)$ be a nonuniformly expanding semiflow
(Section~\ref{sec:NUEflow}) such that the Poincar\'e map $T=T_r:X\to X$ is an intermittent map as in Subsection~\ref{sec:LSV}.
The neutral fixed points $x_i$ for $T$ become neutral periodic orbits $p_i$
of period $r(x_i)$ for the semiflow $T_t$.
For definiteness, we take $T$ to be the map in~\eqref{eq:CFKM}.

Given a $C^\eta$ observable $v:M\to\R$ with $\int_M v\,d\mu_M=0$, we define the lifted observable $w=v\circ \pi:Y^R\to\R$ and the induced observable
$W:Y\to\R^2$
as in Section~\ref{sec:abstract}.

Let $x_0,\,x_1$ denote the neutral fixed points for $T$ and define
\[
\omega_i=r(x_i)^{-1}\int_0^{r(x_i)}v(T_ux_i)\,du.
\]
We have the following counterpart to Proposition~\ref{prop:LSV}.

\begin{prop} \label{prop:LSV1}
Define $\omega:Y\to\R$, $\omega|_{Y_i}=\omega_i$, $i=0,1$.
Then $W=\omega\tau+Z$ where $Z\in L^p$ for some $p>\alpha$.
Moreover, $Z\in L^\infty$ provided that $\eta\in(1/\alpha,1]$.
\end{prop}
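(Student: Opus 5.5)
We will reduce Proposition~\ref{prop:LSV1} to the discrete-time estimate in Proposition~\ref{prop:LSV}, applied to the Poincar\'e map $T=T_r:X\to X$. Recall from the proof of Lemma~\ref{lem:NUEflow}(a) the function $V:X\to\R$, $V=\int_0^r v\circ T_u\,du$. It is $C^\eta$ and satisfies $W=\sum_{j=0}^{R-1}V\circ T^j$. Likewise $\tau=\sum_{j=0}^{R-1}r\circ T^j$.

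\textbf{Step 1: pointwise decomposition.} Set $\phi=V-\omega_i r$ on the relevant region. The choice of $\omega_i$ gives exactly $V(x_i)=\omega_i r(x_i)$, so $\phi_i:=V-\omega_i r$ is $C^\eta$ on $X$ and vanishes at the neutral fixed point $x_i$. For $y\in Y_i$ we then have
\[
Z(y)=W(y)-\omega_i\tau(y)=\sum_{j=0}^{R(y)-1}\phi_i(T^jy).
\]
This has exactly the form treated in Proposition~\ref{prop:LSV}, with the observable $v$ there replaced by $\phi_i$, and $\phi_i(x_i)=0$ replacing $v(i)$.

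\textbf{Step 2: the estimate.} As in the proof of Proposition~\ref{prop:LSV}, the bound $|\phi_i(T^jy)|\le|\phi_i|_\eta\,d(T^jy,x_i)^\eta$ together with $d(T^jy,x_i)\approx(R(y)-j)^{-\alpha}$ for $y\in Y_i$ gives
\[
|Z(y)|\ll\sum_{j=1}^{R(y)}j^{-\eta\alpha}\ll 1+R(y)^{1-\eta\alpha}\qquad (\eta\ne 1/\alpha).
\]
The asymptotic $d(T^jy,x_i)\approx(R(y)-j)^{-\alpha}$ is standard for Thaler maps; here the returns to $Y$ via the $i$-th branch approach $x_i$. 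This immediately yields $Z\in L^\infty$ when $\eta>1/\alpha$.

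For $\eta<1/\alpha$, we use $\mu(R>t)\ll t^{-\alpha}$ and $|Z|\ll R^{1-\eta\alpha}$, which give $Z\in L^p$ for every $p<\alpha/(1-\eta\alpha)$. Since $\alpha/(1-\eta\alpha)>\alpha$, we can choose such a $p>\alpha$.

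The only delicate point is the verification that $V$ (and hence $\phi_i$) is $C^\eta$ with $\phi_i(x_i)=0$. This was already done in Lemma~\ref{lem:NUEflow}(a), so there is no real obstacle. The rest is a verbatim repetition of the discrete-time argument, with $\tau$ in place of $R$.
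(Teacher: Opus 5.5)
Your proposal is correct and follows essentially the paper's argument: on $Y_i$ you write $Z$ as a sum along the excursion of terms bounded by $|T^jy-x_i|^\eta\approx (R(y)-j)^{-\alpha\eta}$, sum these, and read off the $L^p$ and $L^\infty$ statements. Your choice $\phi_i=V-\omega_i r$ is slightly cleaner, because it gives exactly $W-\omega_i\tau$. The paper's displayed sum instead subtracts $R(y)r(x_i)\omega_i$ and silently absorbs the discrepancy $\omega_i\sum_j\big(r(T^jy)-r(x_i)\big)$, which is harmless since it obeys the same bound.
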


\begin{proof}
Note that
\[
W(y)=\int_0^{\tau(y)} v(y,u)\,du=
\sum_{j=0}^{R(y)-1}\int_0^{r(T^jy)}v(T_u T^jy)\,du.
\]
In particular,
\[
W(x_i)=
\sum_{j=0}^{R(x_i)-1}\int_0^{r(x_i)}v(T_ux_i)\,du
=R(x_i)r(x_i)\omega_i = \omega_i\tau(y).
\]
Therefore for $y\in Y_i$,  
\[
Z(y)
=W(y)-W(x_i)=\sum_{j=0}^{R(y)-1}\Big\{\int_0^{r(T^j y)}v(T_uT^j y)\,du-
\int_0^{r(x_i)}v(T_ux_i)\,du\Big\}.
\]
Now $\int_0^{r(T^j y)}v(T_uT^j y)\,du- \int_0^{r(x_i)}v(T_ux_i)\,du=I_1+I_2$ where
\[
I_1  =\int_0^{r(T^j y)}\big\{v(T_uT^j y)-
v(T_ux_i)\big\}\,du, \qquad
 I_2  = \int_{r(x_i)}^{r(T^j y)}v(T_uT^j y)\,du.
\]

Our assumptions on $v$, $r$ and $T_t$ ensure that
\[
|I_1|,\,|I_2| \ll |T^j y-x_i|^\eta
\approx |R(y)-j|^{-\alpha\eta}.
\]
Hence
\[
|Z(y)| 
\ll
\sum_{j=1}^{R(y)}j^{-\alpha\eta}
\approx R(y)^{1-\alpha\eta}+1.
\]
(We suppose without loss that $\eta\neq 1/\alpha$.)
The result follows.
\end{proof}

\begin{cor} \label{cor:LSVflow}
Let $T_t:M\to M$ be the intermittent semiflow above with parameter $\alpha\in(1,2]$. Suppose that $v:M\to\R$ is $C^\eta$ and that
$\omega_i$ are nonzero and not all equal.
Assume that $\Psi=(W,\ttau):Y\to\R^2$ is aperiodic.
Then the stable MLLT holds:
\[
\lim_{t\to\infty}a_t\int_M \zeta_1\,\zeta_2\circ T_t\,\xi\circ(v_t-qa_t)\,d\mu_M
=g(q)\int_M \zeta_1\,d\mu_M \int_M \zeta_2\,d\mu_M \int_{\R^d} \xi\,d\Leb,
\]
for all $\zeta_1,\,\zeta_2\in C_c\big(M\setminus\{p_0,p_1\}\big)$ and $\xi\in C_c(\R)$.
\end{cor}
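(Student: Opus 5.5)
The plan mirrors the proof of Theorem~\ref{thm:CFKM}, now in continuous time via Theorem~\ref{thm:NUEflow}. The semiflow $T_t$ is a nonuniformly expanding semiflow modelled by the suspension $(F_t,Y^\tau,\mu^\tau)$. Here $F=T^R:Y\to Y$ is the mixing Gibbs-Markov first return map of Subsection~\ref{sec:LSV}, and $\tau=\sum_{j=0}^{R-1}r\circ T^j$. Since $r$ is bounded and bounded below, $\tau\approx R$ on each partition element. Hence, from $\mu(R1_{Y_i}>t)\sim ct^{-\alpha}$, we obtain $\mu(\tau1_{Y_i}>t)\sim c_i t^{-\alpha}$.

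To justify this, on $Y_i$ we have $\tau=r(x_i)R+O(\sum_j|T^jy-x_i|^\eta)$, and the error is bounded by $1+R^{1-\alpha\eta}$ exactly as in Proposition~\ref{prop:LSV1}. This error lies in $L^p$ for some $p>\alpha$, so it does not change the tail asymptotics, and $c_i=c\,r(x_i)^\alpha$.

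Proposition~\ref{prop:LSV1} then gives the decomposition $W=\omega\tau+Z$ with $Z\in L^p$, $p>\alpha$, which is exactly the form~\eqref{eq:WZ}. Proposition~\ref{prop:jointrv} applies: $\Psi=(W,\ttau)$ is regularly varying with index $-\alpha$. It is nondegenerate if and only if $(\omega_0,1)$ and $(\omega_1,1)$ span $\R^2$, that is, if and only if $\omega_0\neq\omega_1$, which is our hypothesis. The hypothesis $\omega_i\neq0$ is not needed for the spanning itself; it ensures the $W$-marginal is genuinely heavy tailed in both directions.

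Aperiodicity is assumed, and $d=1<\alpha^2$ since $\alpha>1$. Theorem~\ref{thm:NUEflow} therefore yields the stable MLLT
\[
\lim_{t\to\infty}a_t\mu_M\bigl(x\in A\cap T_t^{-1}B:\,v_t(x)\in qa_t+K\bigr)=g(q)\mu_M(A)\mu_M(B)|K|
\]
for sets $A,B$ that are images of rectangles $A_0\times I$ with $\mu(\partial A_0)=0$, and for bounded $K$ with $|\partial K|=0$.

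Two steps remain: upgrading from these sets to test functions, and enlarging the class of sets. First, as in the proof of Theorem~\ref{thm:CFKM}, the inducing set can be enlarged to $Y_\eps$ containing $[\eps,1-\eps]$, with first return map still Gibbs-Markov and the same tail structure. Under $\pi(y,u)=T_uy$, the images of rectangles over $Y_\eps$ then generate a family of sets whose boundaries have measure zero. This family covers any compact subset of $M\setminus\{p_0,p_1\}$, since such a set avoids a neighbourhood of the neutral orbits. Second, I would pass from indicator functions $1_A$, $1_B$, $1_K$ to $\zeta_1,\zeta_2\in C_c(M\setminus\{p_0,p_1\})$ and $\xi\in C_c(\R)$ by Lemma~\ref{lem:vague}. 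The alternative is the standard sandwiching argument: approximate each compactly supported continuous function from above and below by finite linear combinations of indicators of such sets, with small error in $L^1$. This works because the limit is a product of measures and the normalised quantities $a_t\mu_M(\cdot)$ are uniformly bounded on compact sets, by the MLLT applied to a slightly larger set.

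I expect the main obstacle to be this extension step. The sandwiching needs the MLLT uniformly over a finite family of sets, which is automatic, together with the approximating sets staying inside the enlarged tower. One also has to check that the enlargement $Y_\eps$ preserves the hypotheses of Theorem~\ref{thm:NUEflow}. Aperiodicity is stated for the original $Y$ and must be transferred to $Y_\eps$. I would do this by noting that a measurable solution of the cohomological equation for the first return to $Y_\eps$ induces one for the first return to $Y\subset Y_\eps$ after inducing further. Alternatively, I would simply interpret the aperiodicity assumption as holding for the chosen inducing set.
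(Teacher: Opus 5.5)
Your main line is correct and is exactly the paper's proof. Proposition~\ref{prop:LSV1} feeds Proposition~\ref{prop:jointrv}, giving regular variation, and nondegeneracy holds since $\omega_0\neq\omega_1$. Aperiodicity is assumed, $d=1<\alpha^2$, and Theorem~\ref{thm:NUEflow} finishes. The paper's proof stops there. Your check of the tail of $\tau$, and your discussion of enlarging the section and passing to test functions (as in the proof of Theorem~\ref{thm:CFKM}), only make explicit what the paper leaves implicit.

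One auxiliary step does not work as stated: transferring aperiodicity from $Y$ to $Y_\eps$ by inducing. Suppose $\phi\circ F_\eps=\lambda e^{is\cdot\Psi_\eps}\phi$ on $Y_\eps$, with $\Psi_\eps=(W_\eps,\tau_\eps-\bar\tau_\eps)$. Inducing to $Y\subset Y_\eps$ gives
\[
\phi\circ F_Y=\bigl(\lambda e^{-is_2\bar\tau_\eps}\bigr)^{\rho}\,e^{i(s_1W_Y+s_2\tau_Y)}\,\phi ,
\]
where $\rho$ is the number of $F_\eps$-steps needed to return to $Y$. Since $\rho$ is nonconstant, the factor $(\lambda e^{-is_2\bar\tau_\eps})^{\rho}$ is not a constant unless $\lambda=e^{is_2\bar\tau_\eps}$. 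So in general you do not get a solution of the aperiodicity equation on $Y$, and hence no contradiction. The discrete-time aperiodicity condition depends on the section through the return count. Your fallback, assuming aperiodicity for the enlarged section $Y_\eps$, is what is actually needed, and it is also what the paper tacitly uses.

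A minor point: your side remark on $\omega_i\neq0$ is inaccurate. For example, $0<\omega_0<\omega_1$ gives $W$ a heavy tail in one direction only. That hypothesis is simply not used in the argument.
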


\begin{proof}
By Proposition~\ref{prop:LSV1}, we can apply Proposition~\ref{prop:jointrv} to deduce that $\Psi$ is regularly varying and nondegenerate.
By assumption, $d=1<\alpha^2$ and $\Psi$ is aperiodic.
Hence we have verified the assumptions of Theorem~\ref{thm:NUEflow} 
and the result follows.
\end{proof}

\begin{rmk} 
Intermittent semiflows with several neutral periodic orbits and $\R^d$-valued observables work the same way, subject to the restrictions concerning $d$.
Our results also cover invertible intermittent flows, but we require either exponential contraction along stable leaves or restrictions on the observable.
\end{rmk}

\appendix
\renewcommand{\thesubsection}{\Alph{section}.\arabic{subsection}}

\section{Vague convergence lemma}
\label{app:vague}

Throughout this appendix, all measures are non-negative and all measures on $\R^m$ are Borel measures.
Let $\Leb$ denote Lebesgue measure on $\R^m$ and
let $\cE$ denote the collection of bounded measurable subsets $E\subset\R^m$ with $\Leb(\partial E)=0$.
Let $C_c(\R^m)$ denote the space of continuous compactly supported functions $g:\R^m\to\R$.

Now, let $(\Omega,\PP)$ be a probability space and let
$\cA$ be some collection of measurable subsets of $\Omega$.
Let $F_t:\Omega\to\Omega$, $t>0$, be a family of measurable transformations and let
$\zeta_{t,\gamma}:\Omega\to\R^m$, $t>0$, $\gamma\in \Gamma$, be a parameterised family of measurable functions.
Also, let $a_\gamma:(0,\infty)\to(0,\infty)$, $\gamma\in\Gamma$, be a parameterised function.

We consider measurable functions
$H:\Omega\to\R^m$ satisfying the property
\begin{description}
\item[(H)\label{itm:H}]
For any $A\in\cA$ and any $\delta>0$, there exist
disjoint subsets $A_1,\dots,A_n\subset A$ with $A_1,\dots,A_n\in\cA$ such that
\[
\PP\Big(A\setminus \bigcup_{j=1}^n A_j\Big) < \delta
\quad\text{and} \quad
\max_{1\le j\le n} \diam H(A_j)< \delta .
\]
In addition, $H(A)$ is a bounded subset of $\R^m$ for each $A\in\cA$.
\end{description}

\begin{lemma} \label{lem:vague}
Suppose that
\begin{equation} \label{eq:vague}
\lim_{t\to\infty}a_\gamma(t) \PP (\omega\in A\cap F_t^{-1}B:\,
\zeta_{t,\gamma}(\omega)\in E) = \PP(A)\PP(B)\Leb(E)
\end{equation}
uniformly in $\gamma\in\Gamma$,
for all $A,B\in\cA$, $E\in\cE$.
Suppose that $H_1,H_2:\Omega\to\R^m$ satisfy property~\ref{itm:H}.
Then
\[
\lim_{t\to\infty}a_\gamma(t) \PP \left(\omega\in A\cap F_t^{-1}B:\, \zeta_{t,\gamma}(\omega)+H_1(\omega)+H_2(F_t\omega)\in E\right) = \PP(A)\PP(B)\Leb(E)
\]
uniformly in $\gamma\in\Gamma$,
for all $A,B\in\cA$, $E\in\cE$.
\end{lemma}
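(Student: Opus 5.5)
The plan is a sandwich argument: localise $H_1$ and $H_2$ on small pieces where they are nearly constant, and absorb the resulting small shifts into slightly enlarged or shrunk sets $E$. Fix $A,B\in\cA$, $E\in\cE$ and $\delta>0$. By property \ref{itm:H} for $H_1$ on $A$, choose disjoint $A_1,\dots,A_n\in\cA$ inside $A$ with $\PP(A\setminus\bigcup A_i)<\delta$ and $\diam H_1(A_i)<\delta$. Do the same for $H_2$ on $B$, giving disjoint $B_1,\dots,B_p\in\cA$. Pick points $h_i\in H_1(A_i)$ and $k_j\in H_2(B_j)$.

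On $A_i\cap F_t^{-1}B_j$ we have $|H_1(\omega)+H_2(F_t\omega)-(h_i+k_j)|<2\delta$. Write $E^{+\eps}=\{x:\operatorname{dist}(x,E)\le\eps\}$ and $E^{-\eps}=\{x:\operatorname{dist}(x,\R^m\setminus E)>\eps\}$. Then
\[
\{\zeta_{t,\gamma}\in E^{-2\delta}-h_i-k_j\}\subset\{\zeta_{t,\gamma}+H_1+H_2\circ F_t\in E\}\subset\{\zeta_{t,\gamma}\in E^{+2\delta}-h_i-k_j\}
\]
on $A_i\cap F_t^{-1}B_j$. The sets $E^{\pm2\delta}$ are bounded, but their boundaries need not be Lebesgue-null. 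To fix this, I will replace $2\delta$ by a nearby radius $\eps\in(2\delta,3\delta)$ for which $\Leb(\partial E^{\pm\eps})=0$. Such radii exist because the level sets of the distance function are null for all but countably many radii. Translates of sets in $\cE$ remain in $\cE$. Since there are only finitely many pairs $(i,j)$, the uniform-in-$\gamma$ convergence \eqref{eq:vague} applies to each pair, and summing over pairs gives
\[
\limsup_{t\to\infty}\sup_\gamma a_\gamma(t)\PP\Big(\bigcup_{i,j}(A_i\cap F_t^{-1}B_j)\cap\{\cdots\in E\}\Big)\le \PP(A)\PP(B)\Leb(E^{+\eps}),
\]
together with the matching $\liminf$ lower bound with $E^{-\eps}$ and $\PP(\bigcup A_i)\PP(\bigcup B_j)\ge(\PP(A)-\delta)(\PP(B)-\delta)$.

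The main obstacle is the remainder, namely the set of $\omega\in A\cap F_t^{-1}B$ with $\omega\notin\bigcup A_i$ or $F_t\omega\notin\bigcup B_j$. Its raw probability is not $o(1/a_\gamma(t))$, so \eqref{eq:vague} must be applied to it directly. This requires $A\setminus\bigcup A_i$ to belong to $\cA$, which is not assumed. I will avoid needing this by exploiting boundedness. Since $H_1(A)$ and $H_2(B)$ are bounded, say by $D$, the event $\{\zeta+H_1+H_2\circ F_t\in E\}$ on $A\cap F_t^{-1}B$ is contained in $\{\zeta\in E'\}$ with $E'=E^{+2D}$ (after adjusting the radius so that $E'\in\cE$). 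Hence
\[
a_\gamma(t)\PP(\text{remainder})\le a_\gamma(t)\Big[\PP(A\cap F_t^{-1}B\cap\{\zeta\in E'\})-\sum_{i,j}\PP(A_i\cap F_t^{-1}B_j\cap\{\zeta\in E'\})\Big].
\]
The subtraction is valid because the sets $A_i\cap F_t^{-1}B_j$ are disjoint subsets of $A\cap F_t^{-1}B$. Applying \eqref{eq:vague} to $A,B$ and to each pair $A_i,B_j$, the right-hand side tends, uniformly in $\gamma$, to $\Leb(E')\big(\PP(A)\PP(B)-\PP(\bigcup A_i)\PP(\bigcup B_j)\big)\le\Leb(E')\,\delta(\PP(A)+\PP(B))$. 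This is $O(\delta)$, since $E'$ does not depend on $\delta$.

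Finally, let $\delta\to0$. Because $\Leb(\partial E)=0$, we have $\Leb(E^{\pm\eps})\to\Leb(E)$ as $\eps\to0$. Combining the upper and lower bounds for the main part with the $O(\delta)$ bound on the remainder yields the stated limit, uniformly in $\gamma\in\Gamma$.
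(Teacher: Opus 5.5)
Your proof is correct, and it takes a different route from the paper.

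\textbf{The paper's route.} The paper never argues with sets directly. It first uses the uniform Portmanteau result, Lemma~\ref{lem:Port}, to reduce to test functions $g\in C_c(\R^m)$. It then proves the joint statement \eqref{eq:vagueclaim} for $G(x,y,z)=g(x+y+z)$. To do this, it partitions a closed disk $D$ containing the $z$-support of $G$ into small compact cells $C_k$ with null boundary. Using uniform continuity of $G$, it compares the integral with the sums
\[
\sum_{i,j,k}\Big\{\min_{H_1(A_i)\times H_2(B_j)\times C_k}G\Big\}\,a_\gamma(t)\PP\big(A_i\cap F_t^{-1}B_j\cap\zeta_{t,\gamma}^{-1}C_k\big).
\]
The shift $H_1+H_2\circ F_t$ disappears only at the end, through translation invariance of $\Leb$.

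\textbf{Your route, and what each buys.} Your sandwich between the parallel sets $E^{\pm\eps}$, with radii chosen so the boundaries are null, replaces both the Portmanteau step and the cell decomposition. It is more elementary, self-contained, and works set by set. The paper's detour buys a limit statement for the triple $(H_1,H_2\circ F_t,\zeta_{t,\gamma})$ against test functions, which is not tied to the additive structure.

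\textbf{A point in your favour.} To control the leftover terms $I_1,I_2$, the paper applies \eqref{eq:vague} directly to $A\setminus A'$ and $B\setminus B'$. These sets need not belong to $\cA$. Your subtraction over the disjoint pieces $A_i\cap F_t^{-1}B_j$, with the fixed enlarged set $E'$ playing the role of $D$, is exactly what justifies that step.
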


\begin{proof}
Fix $A,\,B\in\cA$ and 
introduce the family of measures
$\Q_t(D)=a_\gamma(t)\, \PP(A\cap F_t^{-1}B\cap D)$ on $\R^m$.
Also, let $\Q(D)=\PP(A)\PP(B)\Leb(D)$.
Define
\[
S^{t,\gamma}:\Omega\to\R^m, \qquad
S^{t,\gamma}=\zeta_{t,\gamma}+H_1+H_2\circ F_t.
\]
We are required to prove that
$\lim_{t\to\infty}S^{t,\gamma}_*\,\Q_t(E)=\Q(E)$ for all
$E\in\cE$.
Note that $S^{t,\gamma}_*\,\Q_t(\R^m)=a_\gamma(t)$, so the measures $S^{t,\gamma}_*\,\Q_t$ are trivially Radon measures (finite on compact sets).
Also $\Q$ is a Radon measure.
By Lemma~\ref{lem:Port} below,
 it suffices to prove that
\begin{equation} \label{eq:port}
\lim_{t\to\infty}\int_{\R^m}g\,d S^{t,\gamma}_*\,\Q_t=\int_{\R^m}g\,d\Q
\end{equation}
uniformly in $\gamma$, for all $g\in C_c(\R^m)$.

Let $g\in C_c(\R^m)$, and define
\[
G:(\R^m)^3\to \R,
\qquad G(x,y,z)=g(x+y+z).
\]
We claim that
\begin{align} \label{eq:vagueclaim} \nonumber
\lim_{t\to +\infty}a_\gamma(t)&\int_{A\cap F_t^{-1}B}G(H_1(\omega),H_2(F_t \omega),\zeta_{t,\gamma}(\omega))\, d\PP(\omega)\\
&=\int_{A\times B\times\R^m}G(H_1(\omega),H_2(\omega'),z)d(\PP\times\PP\times\Leb)(\omega,\omega',z)  
\end{align}
uniformly in $\gamma$.
Suppose that the claim is true. Then by definition of $G$,
\begin{align*}
\lim_{t\to +\infty}a_\gamma(t)&\int_{A\cap F_t^{-1}B}g(\zeta_{t,\gamma}(\omega)+H_1(\omega)+H_2(F_t \omega))\, d\PP(\omega)\\
&=\int_{A\times B\times\R^m}g(H_1(\omega)+H_2(\omega')+z)d(\PP\times\PP\times\Leb)(\omega,\omega',z) 
\\ &=\int_{A\times B\times\R^m}g(z)d(\PP\times\PP\times\Leb)(\omega,\omega',z) 
\end{align*}
uniformly in $\gamma$,
yielding~\eqref{eq:port}.

It remains to prove the claim.
	Since $g$ is compactly supported and $H_1(A)$, $H_2(B)$ are bounded, we can find a closed disk $D\subset\R^m$ 
	such that $\supp G\subset \R^m\times\R^m\times D$.
For any $\eps>0$,
there exists $\delta\in(0,\eps/\{\Leb(D)(\| G\|_\infty+1)\})$ such that 
\begin{equation} \label{eq:G}
|G(x,y,z)-G(x',y',z')|<\eps \quad\text{for all}\quad
|x-x'|<\delta,\ |y-y'|<\delta,\ |z-z'|<\delta.
\end{equation} 
We partition $D$ into a finite number of disjoint compact
subsets $C_1,C_2,\dots$ with $\Leb(\partial C_k)=0$ and $\diam C_k<\delta$.

Since $H_1$, $H_2$ satisfy \ref{itm:H}, we can choose
disjoint subsets $A_i\subset A$ and disjoint subsets 
$B_j\subset B$ with $A_1,\dots,A_m;\,B_1,\dots,B_n\in\cA$ such that
\begin{alignat}{2} \label{eq:N4a}
& \PP(A\setminus A') < \delta, & \qquad 
& \PP(B\setminus B') < \delta, \\
\label{eq:N4b}
& \max_{1\le i\le m} \diam H_1(A_i)  < \delta , & \qquad
& \max_{1\le j\le n} \diam H_2(B_j)  < \delta ,
\end{alignat}
where $A'=\bigcup_{i=1}^mA_i$ and $B'=\bigcup_{j=1}^nB_j$.

Now,
\begin{align*}
a_\gamma(t) & \int_{A\cap F_t^{-1}B}  G(H_1(\omega),H_2(F_t \omega),\zeta_{t,\gamma}(\omega))\, d\PP(\omega)
\\ & =
a_\gamma(t)\int_{A\cap F_t^{-1}B\cap\zeta_{t,\gamma}^{-1}D}  G(H_1(\omega),H_2(F_t \omega),\zeta_{t,\gamma}(\omega))\, d\PP(\omega)
\\ & =
a_\gamma(t)\int_{A'\cap F_t^{-1}B'\cap\zeta_{t,\gamma}^{-1}D}G(H_1(\omega),H_2(F_t \omega),\zeta_{t,\gamma}(\omega))\, d\PP(\omega)+I_1(t)+I_2(t),
\end{align*}
where
\begin{align*}
|I_1(t,\gamma)| & \le \|G\|_\infty\, a_\gamma(t)\,
\PP\big((A\setminus A')\cap F_t^{-1}B\cap \zeta_{t,\gamma}^{-1}D\big), \\
|I_2(t,\gamma)| & \le \|G\|_\infty\, a_\gamma(t)\,
\PP\big(A'\cap F_t^{-1}(B\setminus B')\cap \zeta_{t,\gamma}^{-1}D\big).
\end{align*}
By~\eqref{eq:vague} and~\eqref{eq:N4a},
\[
\lim_{t\to\infty}a_\gamma(t)\,
\PP\big((A\setminus A')\cap F_t^{-1}B\cap \zeta_{t,\gamma}^{-1}D\big)
=\PP(A\setminus A')\PP(B)\Leb(D)<\delta\Leb(D)
\]
uniformly in $\gamma$.
Hence $\limsup_{t\to\infty}\sup_{\gamma\in\Gamma}|I_1(t,\gamma)|<\delta\|G\|_\infty\Leb(D)<\eps$.
Similarly, $\limsup_{t\to\infty}\sup_{\gamma\in\Gamma}|I_2(t,\gamma)|<\eps$.

Next, by~\eqref{eq:G} and~\eqref{eq:N4b},
\begin{align*}
a_\gamma(t) & \int_{A'\cap F_t^{-1}B'\cap\zeta^{-1}D} G(H_1(\omega),H_2(F_t \omega),\zeta_{t,\gamma}(\omega))\, d\PP(\omega) \\
&= \sum_{i,j,k} a_\gamma(t) \int_{A_i\cap F_t^{-1}B_j\cap \zeta_{t,\gamma}^{-1}C_k}G(H_1(\omega),H_2(F_t \omega),\zeta_{t,\gamma}(\omega))\, d\PP(\omega) \\
&= \sum_{i,j,k}  \Big\{\min_{H_1(A_i)\times H_2(B_j)\times C_k}G\Big\} a_\gamma(t)\PP(A_i\cap F_t^{-1}B_j\cap \zeta_{t,\gamma}^{-1}C_k)+I_3(t,\gamma),
\end{align*}
where
\[
|I_3(t,\gamma)|\le \delta\sum_{i,j,k} a_\gamma(t)\PP(A_i\cap F_t^{-1}B_j\cap \zeta_{t,\gamma}^{-1}C_k).
\]
Hence $\limsup_{t\to\infty}|I_3(t,\gamma)|\le \delta\PP(A)\PP(B)\Leb(D)<\eps$.

By~\eqref{eq:vague},
\begin{align*}
 \lim_{t\to\infty} \sum_{i,j,k} &   \Big\{\min_{H_1(A_i)\times H_2(B_j)\times C_k}G\Big\} a_\gamma(t)\PP(A_i\cap F_t^{-1}B_j\cap \zeta_{t,\gamma}^{-1}C_k)
\\ & \qquad =
 \sum_{i,j,k}    \Big\{\min_{H_1(A_i)\times H_2(B_j)\times C_k}G\Big\} \PP(A_i)\PP(B_j)\Leb(C_k).
\end{align*}
Reversing the first two steps of the calculations in a simpler situation,
\begin{align*}
 \sum_{i,j,k}    & \Big\{\min_{H_1(A_i)\times H_2(B_j)\times C_k}G\Big\} \PP(A_i)\PP(B_j)\Leb(C_k).
\\ & =
\int_{A\times B\times \R^m}G(H_1(\omega),H_2(\omega'),z)\,d(\PP\times\PP\times\Leb)(\omega,\omega',z)+I_4,
\end{align*}
where $|I_4|\le \delta\Leb(D)+2\delta\|G\|_\infty\Leb(D)<3\eps$.

Since $\eps>0$ is arbitrary, the claim~\eqref{eq:vagueclaim} follows.
\end{proof}

A Portmanteau theorem for vague convergence can be found for example 
in~\cite[Theorem~3.2]{Resnick07}.
In the proof of Lemma~\ref{lem:vague}, we required a uniform version. Since we could not find such a result in the literature, we state and prove this here.

\begin{lemma} \label{lem:Port}
Let $\lambda_t^\gamma$, $t>0$, $\gamma\in\Gamma$, and $\lambda$ be Radon measures on $\R^m$. 
Let $\cE_\lambda$ denote the collection of bounded measurable subsets of $\R^m$ with $\lambda(\partial E)=0$.
Then the following are equivalent:
\begin{itemize}
\item[(a)] $\int_{\R^m} g\,d\lambda_t^\gamma\to \int_{\R^m}g\,d\lambda$
uniformly in $\gamma$ for each $g\in C_c(\R^m)$.
\item[(b)] $\lim_{t\to\infty}\lambda_t^\gamma(E)= \lambda(E)$ uniformly in $\gamma$
for each $E\in\cE_\lambda$.
\end{itemize}
\end{lemma}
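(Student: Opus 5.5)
We prove the two implications separately, following the standard proof of the Portmanteau theorem for vague convergence (as in \cite[Theorem~3.2]{Resnick07}) and checking that every approximation step is uniform in $\gamma$. The key point is that each approximating function or set is chosen using only $\lambda$ and the fixed set $E$ (or function $g$), never using $\lambda_t^\gamma$. Hence finitely many limits, each uniform in $\gamma$, suffice.

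\emph{(a)$\Rightarrow$(b).} Fix $E\in\cE_\lambda$ and $\eps>0$. Let $E^\circ$ and $\bar E$ denote the interior and closure of $E$. Since $\lambda(\partial E)=0$ and $\lambda$ is Radon, outer regularity on the compact set $\bar E$ and inner regularity on the open set $E^\circ$ give a compact $K\subset E^\circ$ and an open bounded $U\supset\bar E$ with $\lambda(U\setminus K)<\eps$. Urysohn's lemma gives $g_1,g_2\in C_c(\R^m)$ with $1_K\le g_1\le 1_E\le g_2\le 1_U$. Then $\lambda_t^\gamma(E)\le\int g_2\,d\lambda_t^\gamma$, and by (a) the right-hand side is at most $\int g_2\,d\lambda+\eps\le\lambda(E)+2\eps$ for all $t\ge t_0$, uniformly in $\gamma$. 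The lower bound via $g_1$ is symmetric. Since only two test functions are used, uniformity is immediate.

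\emph{(b)$\Rightarrow$(a).} Fix $g\in C_c(\R^m)$ with support in a closed cube $D$, and fix $\eps>0$.
\begin{itemize}
\item Choose the cube $D$ so that $\lambda(\partial D)=0$. This is possible because only countably many side lengths give a boundary of positive $\lambda$-mass.
\item Partition $D$ into finitely many small rectangles $C_k$ with $\lambda(\partial C_k)=0$, again avoiding the countably many bad hyperplanes in each coordinate direction. Take the mesh small enough that the oscillation of $g$ on each $C_k$ is less than $\eps$.
\item Set $s=\sum_k g(c_k)1_{C_k}$, where $c_k\in C_k$. Then $|g-s|\le\eps 1_D$.
\end{itemize}
From this,
\[
\Big|\int g\,d\lambda_t^\gamma-\int g\,d\lambda\Big|\le \eps\lambda_t^\gamma(D)+\eps\lambda(D)+\sum_k|g(c_k)|\,|\lambda_t^\gamma(C_k)-\lambda(C_k)|.
\]
By (b) applied to the finitely many sets $D$ and $C_k$, all in $\cE_\lambda$, the last sum tends to $0$ and $\lambda_t^\gamma(D)\to\lambda(D)$, both uniformly in $\gamma$. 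This gives a bound of $3\eps\lambda(D)$ for all large $t$, uniformly in $\gamma$.

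The main obstacle is the bookkeeping in the second implication: producing a finite partition of $D$ into $\lambda$-continuity sets of small diameter. This is handled by the countable-atoms argument for hyperplanes described above. Everything else is routine once one observes that only finitely many uniform limits are invoked.
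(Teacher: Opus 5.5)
Your proof is correct. For (a)$\Rightarrow$(b) you follow the paper's argument in slightly different packaging. The paper bounds $\limsup_t\sup_\gamma\lambda_t^\gamma(\bar E)$ and $\liminf_t\inf_\gamma\lambda_t^\gamma(\Int E)$ using shrinking compact sets $K_j\downarrow\bar E$, growing open sets $G_j\uparrow\Int E$, and dominated convergence. You get the same effect in one step from inner and outer regularity of $\lambda$ and two Urysohn functions.

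The genuine difference is in (b)$\Rightarrow$(a). You partition the \emph{domain} $D$ into small rectangles cut by coordinate hyperplanes of zero $\lambda$-mass, and use uniform continuity of $g$. The paper instead partitions the \emph{range} of $g$: it chooses levels $c_0<\dots<c_k$ of mesh less than $\eps$ with $\lambda(g=c_i)=0$, and works with the sets $g^{-1}(c_{i-1},c_i)\cap D$, whose boundaries lie in $\{g=c_{i-1}\}\cup\{g=c_i\}\cup\partial D$.

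The level-set route needs no coordinate structure, only continuity of $g$ and one bounded $\lambda$-continuity set containing $\supp g$. This is what lets the paper reuse it for the test functions $\zeta_1,\zeta_2$ on the abstract space $Y$ in Corollary~\ref{cor:LLTvague}, where no grid of rectangles is available. Your route is tied to $\R^m$, but it produces a genuine partition of $D$. With open level intervals, the $\lambda$-null sets $\{g=c_i\}\cap D$ are left uncovered, and they may carry $\lambda_t^\gamma$-mass for finite $t$. So the estimate \eqref{eq:D2} as written needs a fix: either use half-open intervals $(c_{i-1},c_i]$, or apply (b) once more to these closed, bounded, $\lambda$-null sets.

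One cosmetic point: your final bound should read something like $2\eps(1+\lambda(D))$ rather than $3\eps\lambda(D)$, so that it also covers the case $\lambda(D)=0$.
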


\begin{proof}
First, suppose that (a) holds and let $K\subset \R^m$ be compact.
Choose a sequence of compact sets $K_j\subset\R^m$ and a sequence
of functions
$f_j\in C_c(\R^m)$ such that
$K_{j+1}\subset K_j$, $\bigcap_j K_j=K$, and
$1_K\le f_j\le 1_{K_j}$ for all $j$.
Then by (a), for each fixed $j$,
\[
\lambda_t^\gamma(K)\le \int_{\R^m}f_j\,d\lambda_t^\gamma\to \int_{\R^m}f_j\,d\lambda
\]
uniformly in $\gamma$.
Hence $\limsup_{t\to\infty} \sup_\gamma \lambda_t^\gamma(K)\le \int_{\R^m}f_j\,d\lambda$.
Also, $|f_j|\le 1_{K_1}$ which is $\lambda$-integrable and $f_j\to 1_K$ pointwise as $j\to\infty$, so by the dominated convergence theorem $\limsup_{t\to\infty} \sup_\gamma \lambda_t^\gamma(K)\le \lambda(K)$.

Second, continue to suppose that (a) holds and let $G\subset\R^m$ be open and bounded. 
Choose a sequence of open sets $G_j\subset\R^m$ and a sequence of functions
$f'_j\in C_c(\R^m)$ such that
$G_j\subset G_{j+1}$, $\bigcup_j G_j=G$, and
$1_G\ge f'_j\ge 1_{G_j}$. Then the same argument as before (but with dominating function $1_G$) shows that
$\liminf_{t\to\infty} \inf_\gamma \lambda_t^\gamma(G)\ge \lambda(G)$.

Now let $E\in\cE_\lambda$. Taking $K=\bar E$ and noting that $E\subset K$ and $\lambda(K)=\lambda(E)$, it follows from the first calculation that
$\limsup_{t\to\infty} \sup_\gamma \lambda_t^\gamma(E)\le \lambda(E)$.
Similarly, taking $G=\Int E$ it follows from the second calculation that
$\liminf_{t\to\infty} \inf_\gamma \lambda_t^\gamma(E)\ge \lambda(E)$.
This completes the proof that (a) implies (b).

Next, we show that (b) implies (a). Let $g\in C_c(\R^m)$, and set $M=\sup|g|$. 
There exists $R>0$ such that $\supp g\subset B_R(0)$.
Note that $\lambda(\partial B_r(0))>0$ for at most countably many $r\in [R, R+1]$. Hence there exists $r>0$ such that $D=\overline{B_r(0)}$ contains $\supp g$ and $D\in\cE_\lambda$.

Given $\eps>0$, choose $c_0<c_1<\dots<c_k$ such that $[-M,M]\subset(c_0,c_k)$ and $c_i-c_{i-1}<\eps$ for $i=1,\dots,k$.
Since $\lambda(g=c)>0$ for at most countably many values of $c$, we can arrange that $\lambda(g=c_i)=0$ for $i=1,\dots,k$.

Let $E_i=g^{-1}(c_{i-1},c_i)$. Then $\partial E_i\subset \{g=c_{i-1}\}\cup\{g=c_i\}$ so $\lambda(\partial E_i)=0$.
Hence $E_i\cap D\in\cE_\lambda$.
(Note that one of the $E_i$ contains $\R^m\setminus\supp g$ which is unbounded, hence the need to intersect with the bounded set $D$.)
By (b), there exists $T>0$ such that
\begin{equation} \label{eq:D1}
\Big|\sum_{i=1}^k c_i\lambda_t^\gamma(E_i\cap D)-
\sum_{i=1}^k c_i\lambda(E_i\cap D)\Big|\le \eps ,
\end{equation}
for all $t>T$, $\gamma$.

By definition of $E_i$, 
\[
c_{i-1}\lambda_t^\gamma(E_i\cap D)\le 
\int_{E_i\cap D}g\,d\lambda_t^\gamma
\le c_i\lambda_t^\gamma(E_i\cap D),
\]
for all $t$, $\gamma$, $i$. Since $\int_{\R^m}g\,d\lambda_t^\gamma=\int_Dg\,d\lambda_t^\gamma$, it follows that
\begin{equation} \label{eq:D2}
\Big|\int_{\R^m} g\,d\lambda_t^\gamma-\sum_{i=1}^k c_i\lambda_t^\gamma(E_i\cap D)\Big|<\eps \lambda_t^\gamma( D),
\end{equation}
for all $t$, $\gamma$.
Similarly,
\begin{equation} \label{eq:D3}
\Big|\int_{\R^m} g\,d\lambda-\sum_{i=1}^k c_i\lambda(E_i\cap D)\Big|<\eps \lambda(D).
\end{equation}
Since $D\in\cE_\lambda$, by (b) we can increase $T$ if need be so that $\lambda_t^{\gamma}(D)\le \lambda(D)+1$
for all $t>T$, $\gamma$.
This together with~\eqref{eq:D1},~\eqref{eq:D2},~\eqref{eq:D3}, yields that
\[
\sup_\gamma\Big|\int_{\R^m} g\,d\lambda_t^\gamma-\int_{\R^m} g\,d\lambda\Big|
<2\eps(1+\lambda(D)),
\]
for all $t>T$.
Since $\eps>0$ is arbitrary, this completes the proof that (b) implies (a).
\end{proof}

\section{Stable MLLT and LLD for Gibbs-Markov maps}
\label{app:N1N2}

In this appendix, we show that the key hypotheses (e.g.\ \ref{itm:N1} and \ref{itm:N2} in the fully nonlattice case) are satisfied for a sufficiently large class of observables of the mixing Gibbs-Markov maps defined in Section~\ref{sec:GMflows}.
In Subsection~\ref{app:N2}, we prove local large deviations.
In Subsections~\ref{app:N1} and~{app:N1two} we prove stable MLLTs for one-sided and two-sided Gibbs-Markov maps.

\subsection{Stable local large deviations for Gibbs-Markov maps}
\label{app:N2}

In this subsection, we show how to prove stable LLD for observables $\Psi$ of Gibbs-Markov maps under the appropriate regularity condition~\eqref{eq:Psireg0} below.
This generalises a result of~\cite{MT22} for uniformly piecewise-H\"older observables $\Psi$.

We begin by proving an abstract LLD result extending~\cite[Theorem~3.2]{MT22}.
Let $F:Y\to Y$ be a measure-preserving map on a probability space $(Y,\mu)$.
Fix $\alpha\in(1, 2]$.
Throughout, $\Psi : Y \to\R^m$ is an integrable observation with $\int_Y\Psi\,d\mu=0$ and $\int_Y |\Psi|^2\, d\mu = \infty$. 
Moreover, we suppose that
$\Psi$ is nondegenerate and regularly varying with index $-\alpha$, $\alpha\in(1,2]$, as in Section~\ref{sec:rv}.

Let $R : L^1 \to  L^1$ be the transfer operator for $F$ defined via the formula
$\int_Y Rv\,w\,d\mu=\int_Y v\, w\circ F\,d\mu$.
Let $\cB\subset  L^\infty$ be a Banach space containing constant functions, with
norm $\|\;\|$ satisfying $|v|_\infty \le \|v\|$ for $v\in\cB$.
Since $R(0) = R$ and $\cB$  contains constant functions, $1$ is an eigenvalue of $R(0)$. We
assume:
\begin{itemize}
\item[(1)] The eigenvalue $1$ is simple, and the remainder of the spectrum of $R(0) : \cB\to\cB$ is
contained in a disk of radius less than $1$.
\end{itemize}

Given $s\in\R^m$, 
define the perturbed operator
$R(s):L^1\to L^1$, 
by
$R(s) v = R(e^{is\cdot \Psi} v)$.
Let $\Pi_\eps(x)=\Pi_{q=1}^m (x_q-\eps,x_q+\eps]$ for $x\in\R^m$, $\eps>0$,
and write $\Pi_\eps=\Pi_\eps(0)$.
Let $\partial_q=\partial/\partial s_q$.
We assume
\begin{itemize}
\item[(2)]
There exist $\eps > 0$, $C > 0$, $r\in(0,1)$, such that for all $s, h \in\Pi_\eps$, $q = 1,\dots, m$,
\begin{itemize}
\item[(i)] $\|R(s)\| \leq C$;
\hspace{1em}(ii) $\|\partial_qR(s)\|\leq   C$;
\hspace{1em}(iii) $\|R(s + h) - R(s)\|\leq  C|h|$; 
\item[(iv)] $\|\partial_qR(s + h) - \partial_qR(s)\| \leq  C|h|^{\alpha-1} \tilde\ell(1/|h|)\,(1+|s/h|^r)$.
\end{itemize}
\end{itemize}

\begin{lemma} \label{lem:absLLD}
Assume (1) and (2). Then
for all $h>0$, there exists $C>0$ such that
\[
\mu\{\Psi_n\in\Pi_h(x)\}\le C\frac{n}{a_n^m}\frac{\tilde\ell(|x|)}{1+|x|^\alpha}\Big(1+\Big(\frac{|x|}{a_n}\Big)^r\Big)
\quad\text{for all $n\ge1$, $x\in\R^m$}
\]
where $r\in(0,1)$ is as in (2)(iv).
\end{lemma}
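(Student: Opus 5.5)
The plan is the standard Fourier/spectral route of~\cite{MT22}, extended to carry the weight $(1+|s/h|^r)$ in (2)(iv).

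\textbf{Smoothing.} Fix a nonnegative kernel $\kappa\in L^1(\R^m)$ with $\kappa\ge 1$ on $\Pi_h$ and $\hat\kappa$ supported in $\Pi_\eps$; the Fej\'er-type product kernel works. Then
\[
\mu(\Psi_n\in\Pi_h(x))\le \int_Y\kappa(\Psi_n-x)\,d\mu = c\int_{\Pi_\eps}\hat\kappa(s)e^{-is\cdot x}\int_Y R(s)^n1\,d\mu\,ds,
\]
using $\int_Y e^{is\cdot\Psi_n}\,d\mu=\int_Y R(s)^n1\,d\mu$.

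\textbf{Spectral decomposition.} By (1) and (2)(iii), after shrinking $\eps$, write $R(s)^n=\lambda(s)^nP(s)+Q(s)^n$ with $\|Q(s)^n\|\ll\rho^n$, $\rho<1$. The $Q$ part contributes $O(\rho^n)$. Since $|x|\ll n$ may be assumed (otherwise use the trivial bound with the boundedness of $a_n$-growth adjustments and $\tilde\ell$ slowly varying), after integrating by parts once more it is absorbed into the claimed bound. The main term is $\int \hat\kappa(s)\lambda(s)^n p(s)e^{-is\cdot x}\,ds$ with $p(s)=\int_Y P(s)1\,d\mu$.

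Standard perturbation theory plus regular variation and nondegeneracy give
\[
1-\lambda(s)\sim \Phi(s)\,|s|^\alpha\tilde\ell(1/|s|),\qquad \operatorname{Re}\Phi\ge c>0 .
\]
Hence $|\lambda(s)^n|\le \exp(-cn|s|^\alpha\tilde\ell(1/|s|))$. The derivatives of $\lambda$ and $p$ inherit (2)(ii) and (2)(iv): $\partial_q\lambda$ and $\partial_qp$ are bounded, and their H\"older moduli are bounded by $|h|^{\alpha-1}\tilde\ell(1/|h|)(1+|s/h|^r)$. Moreover $|\partial_q\lambda(s)|\ll|s|^{\alpha-1}\tilde\ell(1/|s|)$, since $\partial_q\lambda(0)=0$ by the zero mean.

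\textbf{Extracting decay in $x$.} The trivial bound $\int|\lambda|^n\ll a_n^{-m}$ handles $|x|\lesssim a_n$. For $|x|\ge a_n$, choose a coordinate $q$ with $|x_q|\ge |x|/\sqrt m$ and integrate by parts once in $s_q$. This gives the factor $|x|^{-1}$ times
\[
\int \partial_q\big(\hat\kappa\,p\,\lambda^n\big)e^{-is\cdot x}\,ds .
\]
The dominant piece is $n\lambda^{n-1}\partial_q\lambda\,\hat\kappa p$. One derivative only yields $|x|^{-1}$, but we need $|x|^{-\alpha}$ with $\alpha\in(1,2]$. So for the remaining fractional gain $|x|^{-(\alpha-1)}$ we use the shift trick: set $h=\pi x_q^{-1}e_q$ and write
\[
\int f(s)e^{-is\cdot x}\,ds=\tfrac12\int\big(f(s)-f(s+h)\big)e^{-is\cdot x}\,ds,
\]
where $f=n\lambda^{n-1}\partial_q\lambda\,\hat\kappa p$. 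We then estimate $|f(s)-f(s+h)|$. The pieces are:
\begin{itemize}
\item[(a)] the difference of $\partial_q\lambda$, controlled by (2)(iv) as $|h|^{\alpha-1}\tilde\ell(1/|h|)(1+|s/h|^r)$;
\item[(b)] the difference of $\lambda^{n-1}$, controlled by $n|h|\,|\partial\lambda|\,|\lambda|^{n-2}$;
\item[(c)] the differences of $\hat\kappa p$, which are Lipschitz.
\end{itemize}
The region $|s|\le 2|h|$ is handled crudely.

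\textbf{Integration.} Integrating against $|\lambda|^n\le e^{-cn|s|^\alpha\tilde\ell}$ uses the Potter bounds. Term (a) gives
\[
n\,|x|^{-(\alpha-1)}\tilde\ell(|x|)\,a_n^{-m}\big(1+(|x|/a_n)^r\big),
\]
because $\int|s/h|^re^{-cn|s|^\alpha\tilde\ell}\,ds\approx a_n^{-m}(|x|/a_n)^r$. Combined with the prefactor $|x|^{-1}$, this yields the claim. Term (b) gives $n^2|h|\int|s|^{2\alpha-2}\tilde\ell^2|\lambda|^{n-2}\approx n|h|a_n^{-m}a_n^{-(\alpha-1)}$. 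This is dominated by the target since $|h|\le a_n^{-1}$ when $|x|\ge a_n$ and $\alpha-1\le1$ interpolates. The remaining terms are smaller.

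\textbf{Main obstacle.} The hardest step is the second part of the previous step: controlling the nonlinear composition $\lambda^{n-1}\partial_q\lambda$ under the fractional modulus with the extra $|s/h|^r$ weight. The difficulty is keeping track of slowly varying factors uniformly in $n$ and $x$. In particular, $\tilde\ell(1/|h|)=\tilde\ell(|x|)$ must emerge rather than $\tilde\ell(a_n)$. I would first try splitting the integral into $|s|\le|h|$ and $|s|>|h|$, and applying Potter's bounds separately on each piece.
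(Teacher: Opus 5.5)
Your route is the paper's own. The paper reruns the proof of Theorem~3.2 of MT22 with (2)(iv) in place of (H2)(iii): smoothing, $R(s)^n=\lambda(s)^nP(s)+Q(s)^n$, one integration by parts in $s_q$, the half-shift by $h\approx x_q^{-1}e_q$, and term-by-term bounds. Your term~(a) and its bound are exactly the paper's $F_2$. However, two steps fail as written.

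\textbf{The remainder.} The term $\int\hat\kappa(s)e^{-is\cdot x}\int_Y Q(s)^n1\,d\mu\,ds$ is not handled correctly. The bound $O(\rho^n)$ has no decay in $x$, and your reduction to $|x|\ll n$ is false. The trivial estimate $\mu(\Psi_n\in\Pi_h(x))\le n\,\mu(|\Psi|\ge |x|/(2n))$ is of order $n$ when $|x|\approx n$. It beats the target $n a_n^{-m}\tilde\ell(|x|)|x|^{-\alpha}(|x|/a_n)^r$ only for $|x|\ge n^K$ with $K$ large (depending on $\alpha,m,r$), and only thanks to the factor $(|x|/a_n)^r$. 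With that threshold your plan works, since $\rho^n\ll n a_n^{-m}|x|^{-\alpha}$ for $|x|\le n^K$. The paper instead derives from (2)(iv) the modulus
\[
\|\partial_q(Q(s+h)^n)-\partial_q(Q(s)^n)\|\ll\delta_1^n|h|^{\alpha-1}\tilde\ell(1/|h|)\big(1+|s/h|^r\big),
\]
which comes from the revised second estimate of Lemma~3.7 in MT22, the one change needed there. It then treats the $Q$-term by the same integration by parts and shift, giving $F_6\ll\delta_1^n\tilde\ell(|x|)|x|^{r-\alpha}$. Merely ``integrating by parts once more'', without such a modulus, gains only $|x|^{-1}$.

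\textbf{The unshifted piece.} After integrating by parts you shift only $n\lambda^{n-1}\partial_q\lambda\,\hat\kappa p$ and call the rest smaller. Left unshifted, the piece $\lambda^n\partial_q(\hat\kappa p)$ is only $O(|x|^{-1}a_n^{-m})$. This is not dominated by the target once $|x|^{\alpha-1-r}\gg n a_n^{-r}\tilde\ell(|x|)$. For small $r$ (allowed in Theorem~\ref{thm:LLD}, where $r\in(0,\alpha-1)$ is arbitrary) this happens well below $n^K$, so neither the trivial bound nor this estimate covers that range. You must shift the whole of $\partial_q(\hat\kappa p\lambda^n)$. The new differences are those of $\lambda^n$, which are harmless, and of $\partial_q p$. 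The latter needs the inherited modulus (2)(iv) with the weight $(1+|s/h|^r)$, and it again produces the factor $1+(|x|/a_n)^r$.

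\textbf{A minor slip.} Your term~(b) is of order $|h|a_n^{2-m}$, since $n\tilde\ell(a_n)\sim a_n^\alpha$, not $n|h|a_n^{-m}a_n^{-(\alpha-1)}$. After the prefactor $|x|^{-1}$ it is still dominated for $|x|\ge a_n$ because $\alpha\le2$ (use Potter's bounds, or monotonicity of $\tilde\ell$ when $\alpha=2$), so that conclusion stands.
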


\begin{proof}
We refer to~\cite[Proof of Theorem~3.2]{MT22}, restricting to
the case $\alpha\in(1,2]$ which is the relevant case for us.
The only hypothesis that is changed is~\cite[Hypothesis (H2)(iii)]{MT22} which is modified in accordance with our condition (2)(iv).

The statement of~\cite[Lemma~3.4]{MT22} is unchanged (but of course, its meaning is altered since~\cite[Hypothesis (H2)(iii)]{MT22} is altered).
In the remainder of~\cite[Section~3.2]{MT22}, the only modification required is for the second estimate in~\cite[Lemma~3.7]{MT22}.
We have the revised estimate $\|\partial_qQ(s+h)-\partial_qQ(s)\|
\ll |h|^{\alpha-1}\tilde\ell(1/|h|)(1+(|s|/|h|)^r)$ leading to
\begin{equation} \label{eq:Q}
\|\partial_q(Q(s+h)^n)-\partial_q(Q(s)^n)\|\ll 
\delta_1^n|h|^{\alpha-1}\tilde\ell(1/|h|)(1+(|s|/|h|)^r).
\end{equation}

The proof now proceeds as in~\cite[p.~13]{MT22} until the estimates of the expressions $F_1,\dots,F_6$.
As an example,
\[
F_2 =\frac{n}{|x|}\int_{\Pi_{2\eps}}|\lambda(s_2)|^{n-1} |\partial_q\lambda(s_1)-\partial_q\lambda(s_2)|\|P(s_2)\|\,ds
\]
where $|x|\ge \pi/\eps$, 
$\{s_1,s_2\}=\{s,s-h\}$ and $h\approx 1/|x|$.
Applying the estimates in~\cite{MT22} with the modification from condition~(2)(iv), 
\begin{align*}
F_2 & \ll \frac{n}{|x|}\int_{\Pi_{2\eps}} e^{-c(n-1)|s_2|^\alpha\tilde\ell(1/|s_2|)}|x|^{-(\alpha-1)}\tilde\ell(|x|)(1+|s_2x|^r)\,ds 
\\ & \le \frac{n}{|x|}\int_{\Pi_{3\eps}} e^{-c(n-1)|s|^\alpha\tilde\ell(1/|s|)}|x|^{-(\alpha-1)}\tilde\ell(|x|)(1+|sx|^r)\,ds
\\ & = n\frac{\tilde\ell(|x|)}{|x|^\alpha}\int_{\Pi_{3\eps}} e^{-c(n-1)|s|^\alpha\tilde\ell(1/|s|)}\,ds
+ n\frac{\tilde\ell(|x|)}{|x|^{\alpha-r}}\int_{\Pi_{3\eps}} s^r e^{-c(n-1)|s|^{\alpha}\tilde\ell(1/|s|)}\,ds.
\end{align*}
By~\cite[Lemma~2.3]{MT22}, 
$\int_{\Pi_{3\eps}} s^\beta e^{-c(n-1)|s|^{\alpha}\tilde\ell(1/|s|)}\,ds
\ll a_n^{-(m+\beta)}$ for $\beta\ge0$, so
\[
F_2\ll \frac{n}{a_n^m}\frac{\tilde\ell(|x|)}{|x|^\alpha}
\Big(1+\Big(\frac{|x|}{a_n}\Big)^r\Big).
\]
This estimate is the same as in~\cite{MT22} except for the extra
factor of $1+\big(\frac{|x|}{a_n}\big)^r$.
Similarly, the estimates in~\cite{MT22} for $F_1$ and $F_5$ are unchanged except for this same factor, while the estimates for $F_3$ and $F_4$ are completely unchanged. Finally, using~\eqref{eq:Q},
$F_6\ll \delta_1^n 
\frac{\tilde\ell(|x|)}{|x|^{\alpha-r}}
\ll \frac{n}{a_n^{m+r}}
\frac{\tilde\ell(|x|)}{|x|^{\alpha-r}}$.~
\end{proof}

\begin{thm} \label{thm:LLD}
Let $(F,Y,\mu)$ be a mixing Gibbs-Markov map with associated partition $\beta$ and $\theta\in(0,1)$.
Fix $r\in(0,\alpha-1)$ and $\theta_0\in(0,1)$. 
Suppose that there is a constant $C_0$ such that
\begin{equation} \label{eq:Psireg0}
|\Psi(y)-\Psi(y')|\le C_0(\infb |\Psi|+1)d_{\theta_0}(y,y')
\quad\text{for all $y,y'\in b$, $b\in\beta$}.
\end{equation}
Then for all $h>0$, there is a constant $C$ such that
\[
\mu\{\Psi_n\in\Pi_h(x)\}\le C\frac{n}{a_n^m}\frac{\tilde\ell(|x|)}{1+|x|^\alpha}\Big(1+\Big(\frac{|x|}{a_n}\Big)^r\Big)
\quad\text{for all $n\ge1$, $x\in\R^m$}.
\]
\end{thm}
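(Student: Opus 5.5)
The plan is to deduce Theorem~\ref{thm:LLD} from Lemma~\ref{lem:absLLD}. We take $\cB$ to be the usual Banach space of bounded piecewise $d_{\theta_1}$-Lipschitz functions on $Y$, with norm $\|v\|=|v|_\infty+|v|_{\theta_1}$. Here $\theta_1\in[\max(\theta,\theta_0),1)$ is chosen close to $1$. Hypothesis~(1) is standard for mixing Gibbs-Markov maps: $R$ is quasicompact on $\cB$ by the Lasota--Yorke inequality coming from bounded distortion and big images, and $1$ is a simple isolated eigenvalue by mixing. The work is therefore to verify hypothesis~(2) for $R(s)v=R(e^{is\cdot\Psi}v)$.

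We use the standard cylinder estimate for Gibbs-Markov maps:
\[
\|R(1_b\,u)\|\ll \mu(b)\big(|1_bu|_\infty+|1_b u|_{\theta_1}\big),
\]
where $|\cdot|_{\theta_1}$ is the Lipschitz seminorm on $b$. We decompose $R(s)v=\sum_{b\in\beta}R(1_b e^{is\cdot\Psi}v)$ and estimate each piece using~\eqref{eq:Psireg0}. Parts (i) and (iii) of (2) follow from this. For (iii), for instance, we have $|e^{i(s+h)\cdot\Psi}-e^{is\cdot\Psi}|\le|h||\Psi|$. Its Lipschitz constant on $b$ is $\ll |h|(\supb|\Psi|+1)$, using $|e^{ix}-e^{iy}|\le|x-y|$ and $\infb|\Psi|\le\supb|\Psi|$. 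Summing over $b$ then gives a bound by $|h|\sum_b\mu(b)\supb|\Psi|\ll|h|\,|\Psi|_1$, since $\supb|\Psi|\ll\infb|\Psi|+1$ by~\eqref{eq:Psireg0} and $d_{\theta_0}\le 1$. Part (ii) is handled the same way: $\partial_qR(s)v=R(i\Psi_q e^{is\cdot\Psi}v)$, and the product $\Psi_q e^{is\cdot\Psi}$ has Lipschitz constant $\ll(\infb|\Psi|+1)(1+|s|\supb|\Psi|)$ on $b$.

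The main obstacle is (2)(iv), because there the Lipschitz constant picks up an extra factor of $|\Psi|$. Write $\partial_qR(s+h)v-\partial_qR(s)v=R\big(i\Psi_qe^{is\cdot\Psi}(e^{ih\cdot\Psi}-1)v\big)$. The sup-norm contribution is bounded by $\int|\Psi|\min(1,|h||\Psi|)\,d\mu$. By regular variation and Karamata this is $\ll |h|^{\alpha-1}\tilde\ell(1/|h|)$, which is where $\tilde\ell$ arises when $\alpha=2$. The Lipschitz contribution on a cylinder $b$ is of the order
\[
\mu(b)\,(\infb|\Psi|+1)\,\supb|\Psi|\,\Big(\min(1,|h|\supb|\Psi|)+|s|\min(1,|h|\supb|\Psi|)\Big).
\]
The term with $|s|$ comes from differentiating $e^{is\cdot\Psi}$. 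This produces the second-moment-type sum $\sum_b\mu(b)|\Psi_b|^2\min(1,|h||\Psi_b|)$, which is finite only because of the cutoff. It also produces the $|s|$ term, and bounding that term is the step I expect to be hardest. The plan is to use interpolation: replace $\theta_1$ by a power $\theta_1^{r'}$ with $r'$ small. For a Lipschitz function $u$ on $b$ this gives $|u|_{\theta_1^{r'}}\le |u|_{\theta_1}^{r'}(2|u|_\infty)^{1-r'}$. With this bound, an increment $|s||\Psi|$ is only paid to the power $r$, and the resulting terms are of the form $\int|\Psi|^{1+r}\min(1,|h||\Psi|)^{1-r}|s|^r\,d\mu$. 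Estimating these through the tail $\mu(|\Psi|>t)=t^{-\alpha}\ell(t)$ and Karamata gives $|h|^{\alpha-1}\tilde\ell(1/|h|)\,|s/h|^r$, provided $r<\alpha-1$. This is exactly the constraint in the statement. If needed, the relation between $\theta_0$ and the exponent is adjusted by enlarging $\theta_1$, which is why the statement fixes $\theta_0$ and allows any $r\in(0,\alpha-1)$.

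With (1) and (2) verified on $\cB$, Lemma~\ref{lem:absLLD} applies with $m$ equal to the dimension of $\Psi$. It yields the stated bound with this $r$, and the constant $C$ depends on $h$. Since $\Pi_h(x)$ and $B_h(x)$ are comparable, this also gives~\ref{itm:N2}.
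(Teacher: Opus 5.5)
Your overall route matches the paper's. You reduce to Lemma~\ref{lem:absLLD} with $\cB=\cF_\theta$, use the cylinder estimate $\|R(v\chi)\|_\theta\ll\|v\|_\theta\sum_b\mu(b)(\supb|\chi|+D_{b,\theta}\chi)$, and interpolate to avoid second moments. However, your verification of (2)(ii) and (2)(iii) by plain Lipschitz bounds in $\cF_{\theta_1}$, $\theta_1\ge\theta_0$, does not work.

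Put $X_b=\infb|\Psi|+1$. By~\eqref{eq:Psireg0}, $\supb|\Psi|\ll X_b$, so $\sum_b\mu(b)X_b^2\gg\int_Y|\Psi|^2\,d\mu=\infty$.
\begin{itemize}
\item For (ii), your own bound $X_b(1+|s|\supb|\Psi|)$ on the Lipschitz constant of $\Psi_q e^{is\cdot\Psi}$ therefore sums to $+\infty$ for every $s\neq0$, so it gives no bound on $\|\partial_qR(s)\|$.
\item For (iii), the claimed Lipschitz constant $\ll|h|(\supb|\Psi|+1)$ for $e^{i(s+h)\cdot\Psi}-e^{is\cdot\Psi}$ is wrong. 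The gradient of $x\mapsto e^{i(s+h)\cdot x}-e^{is\cdot x}$ is $ih\,e^{i(s+h)\cdot x}+is(e^{i(s+h)\cdot x}-e^{is\cdot x})$. Hence there is a cross term of size $|s||h|X_b^2$, which again is not summable.
\item The same problem appears in your (iv). The sum $\sum_b\mu(b)|\Psi_b|^2\min(1,|h||\Psi_b|)$ is infinite, since the minimum equals $1$ on $\{|\Psi|\ge 1/|h|\}$. In any case, a Lipschitz seminorm cannot see a cap $\min(1,\cdot)$, because nearby points are always in the linear regime.
\end{itemize}

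The missing step is to fix a single space $\cB=\cF_\theta$ with $\theta\in(\theta_0^r,1)$ from the outset, since (1) and (2)(i)--(iv) must all hold in the same space. You then use in every part the interpolated bound
\[
|e^{is\cdot\Psi(y)}-e^{is\cdot\Psi(y')}|\le 2|s|^r|\Psi(y)-\Psi(y')|^r\ll |s|^rX_b^r\,d_{\theta_0}(y,y')^r\le |s|^rX_b^r\,d_\theta(y,y'),
\]
that is, $D_{b,\theta}e^{is\cdot\Psi}\ll|s|^rX_b^r$.
\begin{itemize}
\item This is exactly how the paper proves (ii), obtaining $\|\partial_qR(s)\|\ll|\Psi|_1+|\Psi|_{1+r}^{1+r}+1$. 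So $r<\alpha-1$ is already needed in (ii), not only in (iv).
\item The same bound gives (iii) with constant $\ll|h|(|\Psi|_1+|\Psi|_{1+r}^{1+r}+1)$.
\item It also gives the main term of (iv): $|s|^r\int_Y(|\Psi|^{1+r}+1)\min\{|h|(|\Psi|+1),1\}\,d\mu\ll|s|^r|h|^{\alpha-1-r}\ell(1/|h|)$.
\item The remaining term $\supb|\Psi_q|\,D_{b,\theta}e^{ih\cdot\Psi}$ of (iv) is $\ll X_b\min\{|h|X_b,(|h|X_b)^r\}$. Use the Lipschitz bound when $|h|X_b\le1$ and the interpolated bound otherwise. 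Karamata then gives $\sum_b\mu(b)X_b\min\{|h|X_b,(|h|X_b)^r\}\ll|h|^{\alpha-1}\tilde\ell(1/|h|)$.
\end{itemize}
Your idea of interpolating the whole function in $\cF_{\theta_1^{r}}$ leads to essentially these estimates, so the repair is within reach. As written, though, (ii) and (iii) rest on divergent sums, and the proposal misplaces where the difficulty and the constraint $r<\alpha-1$ enter.
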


\begin{proof}
Without loss, we can suppose that $h\in(0,1)$.
Increase $\theta$ if necessary so that $\theta\in(\theta_0^r,1)$.
Given $v:Y\to\R^d$,  we let
\[
D_{b,\theta}\,v=\sup_{y,y'\in b,\,y\neq y'}|v(y)-v(y')|/d_\theta(y,y'),
\qquad |v|_\theta=\sup_{b\in\beta}D_{b,\theta}v.
\]
We take $\cB=\cF_\theta$ to be the Banach space of functions $v:Y\to\R$ such that $|v|_\theta<\infty$ with norm $\|v\|_\theta=|v|_\infty+|v|_\theta$.
It remains to verify conditions (1) and~(2) in Lemma~\ref{lem:absLLD}.

Since $R(0)$ does not depend on $\Psi$, condition~(1) is unchanged from~\cite{MT22}. We focus here on conditions (2)(ii) and (2)(iv); the other parts of condition~(2) are simpler.

As in the proof of~\cite[Proposition~3.10]{MT22}, we note that
\[
\|R(v\chi)\|_\theta \ll \|v\|_\theta\sum_{b\in\beta} \mu(b)(\supb|\chi|+D_{b,\theta}\chi)
\]
for $\chi:Y\to\C$.
Hence
$\|\partial_q R(s)\|\ll \sum_{b\in\beta} \mu(b)(\supb|\chi|+D_{b,\theta}\chi)$ with 
$\chi=\Psi_q e^{is\cdot \Psi}$.
Now $\supb|\chi|=\supb|\Psi|_q\ll \infb|\Psi|+1$
and $D_{b,\theta}\psi\le D_{b,\theta_0}\psi\ll\infb|\Psi|+1$. 
For $y,y'\in b$,
we have
\begin{align*}
|e^{is\cdot \Psi(y)}-e^{is\cdot \Psi(y')}| & \le \min\{|s||\Psi(y)-\Psi(y')|,2\}
 \le 2|s|^r|\Psi(y)-\Psi(y')|^r
\\ & \ll |s|^r (\infb|\Psi|^r+1) d_{\theta_0}(y,y')^r
\le |s|^r(\infb|\Psi|^r+1) d_\theta(y,y'),
\end{align*}
so $D_{b,\theta}e^{is\cdot \Psi}\ll |s|^r(\infb|\Psi|^r+1)$.
It follows that
\[
\|\partial_q R(s)\|\ll \sum_{b\in\beta} \mu(b)(\infb|\Psi|+\infb|\Psi|^{1+r}+1)\le |\Psi|_1+|\Psi|_{1+r}^{1+r}+1<\infty
\]
by the choice $r<\alpha-1$.
This completes the proof of (2)(ii).

Turning to (2)(iv), 
$\|\partial_qR(s+h)-\partial_qR(s)\|\ll \sum_{b\in\beta} \mu(b)(\supb|\chi|+D_{b,\theta}\chi)$ with
$\chi=\Psi_q e^{is\cdot \Psi}(e^{ih\cdot \Psi}-1)$. 
As in~\cite{MT22}, there are four terms to estimate, the most difficult of which is
\[
S=\sum_{b\in\beta} 
 \supb |\Psi_q|\, \supb |e^{ih\cdot \Psi}-1|\,D_{b,\theta}e^{is\cdot \Psi}.
\]
We have already seen that
$\supb |\Psi_q| \ll \infb|\Psi|+1$ and $D_{b,\theta}e^{is\cdot \Psi}\ll
|s|^r (\infb|\Psi|^r+1)$.
Also, $\supb |e^{ih\cdot \Psi}-1|\le 
\min\{h\supb|\Psi|,2\}\ll
\min\{h(\infb|\Psi|+1),1\}$.
Hence
\begin{align*}
S& \ll 
|s|^r \sum_{b\in\beta} \mu(b)(\infb|\Psi|^{1+r}+1) \min\{h(\infb|\Psi|+1),1\}
\\ & \le |s|^r \int_Y (|\Psi|^{1+r}+1) \min\{h(|\Psi|+1),1\}\,d\mu.
\end{align*}

Let $G(x)=\mu(|\Psi|\le x)$ denote the distribution function of $|\Psi|$, so
$1-G(x)=x^{-\alpha}\ell(x)$ for $x>0$.
Then for $K>1$,
\[
S\ll  |s|^r \int_0^\infty (x^{1+r}+1) \min\{h(x+1),1\}\,dG(x)
\ll  |s|^r(hI_1+I_2)
\]
where
$I_1=\int_0^K (x^{2+r}+1) \,dG(x)$ and
$I_2=\int_K^\infty x^{1+r}\,dG(x)$.
Since $r\in(0,\alpha-1)$, we can proceed as in the proof of~\cite[Lemma~2.2(ii)]{MT22} to obtain
$I_1\ll K^{2-\alpha+r}\ell(K)$ and 
$I_2\ll K^{1-\alpha+r}\ell(K)$.
Taking $K\approx 1/h$, we obtain the estimate
$S \ll |s|^r\ell(1/h)h^{\alpha-1-r}$.
The other three terms in (2)(iv) are more easily estimated using the ideas seen already and are bounded by
$\tilde\ell(1/h)h^{\alpha-1}$.
Hence we obtain the desired estimate in (2)(iv).
\end{proof}

\subsection{Stable MLLT for Gibbs-Markov maps}
\label{app:N1}

In this subsection, we prove stable MLLTs for mixing Gibbs-Markov maps.
For simplicity of notation, we write $A$, $B$ instead of $A_0$, $B_0$.
We consider observables with values in $\R^{m_1}\times\Z^{m_2}$
covering simultaneously 
the nonlattice case ($m_1=d+1$, $m_2=0$) in \ref{itm:N1},
the lattice case ($m_1=0$, $m_2=d+1$) in \ref{itm:L1},
and the hybrid case ($m_1=d$, $m_2=1$) in \ref{itm:NL1}.

We note that analogous proofs have been provided for $m=1$ and in some special cases for $m\ge2$ in~\cite{AD01,AD01b} (though for a smaller class of observables).
We follow the argument in~\cite{AD01} which is based on~\cite{Breiman}.
For i.i.d.\ random variables, the hybrid case was considered in~\cite{Doney91}.

Our underlying assumptions are as follows.
As in Subsection~\ref{app:N2},
we suppose that $(F,Y,\mu)$ is a mixing Gibbs-Markov map with associated partition $\beta$ and $\theta\in(0,1)$.

Let
$\Psi=(u,v):Y\to \R^{m_1}\times \Z^{m_2}$ be integrable but not square integrable
with $\int_Y u\,d\mu=0$.
We suppose that $\Psi$ is aperiodic, nondegenerate, regularly varying with index $-\alpha$, $\alpha\in(1,2]$, and assume that $\Psi$ satisfies~\eqref{eq:Psireg0}.

Let $m=m_1+m_2$ where $m_1,m_2\ge0$ and $m\ge1$.
Write $z\in\R^m$ as $z=(z_1,z_2)$ where $z_j\in\R^{m_j}$.
Similarly, write $s=(s_1,s_2)$.

Let $g_{X_\alpha}$ denote the density of the limiting stable law $X_\alpha$ 
(so
$g_{X_\alpha}(x)=(2\pi)^{-m}\int_{\R^m}e^{-is\cdot x}\,\E \, e^{is\cdot X_\alpha}\,ds$).
Define $\cF_\theta$ as in Subsection~\ref{app:N2}.
Let 
$\hv^{(n)}= v_n-\big[n\int_Y v\,d\mu\big]$.

\begin{thm} \label{thm:LLT}
Let $\zeta_1\in \cF_\theta$, $\zeta_2\in L^1$.  
Let $K\subset\R^{m_1}$ be bounded with $|\partial K|=0$, and let $L>0$.
Then
\begin{align*}
\lim_{n\to\infty}
\sup_{\genfrac{}{}{0pt}{}{z\in\R^m}{|z|\le L}}
\Big| a_n^m\, \int_Y\zeta_1\;\zeta_2\circ F^n\; & 1_{\{u_n\in z_1a_n+K,\,\hv^{(n)}=[z_2a_n]\}} \,d\mu
-g_{X_\alpha}(z)\int_Y\zeta_1\,d\mu\int_Y\zeta_2\,d\mu\Big|=0.
\end{align*}
\end{thm}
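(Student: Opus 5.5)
We follow the Breiman/Aaronson--Denker route: smooth the indicator of $K$ by a test function with compactly supported Fourier transform, express the resulting integral through the twisted transfer operators $R(s)v=R(e^{is\cdot\Psi}v)$, and split the frequency integral into a small ball, where spectral perturbation produces the stable density, and its complement, where aperiodicity gives exponential decay. First reductions. By linearity and positivity it suffices to take $\zeta_1\ge0$ in $\cF_\theta$ and $\zeta_2\ge0$. An $L^1$-approximation argument reduces $\zeta_2\in L^1$ to $\zeta_2\in L^\infty$, and then to $\zeta_2$ a finite combination of indicators of cylinders. This uses the a priori bound $a_n^m\mu(u_n\in z_1a_n+K,\,\hv^{(n)}=[z_2a_n])\ll1$ from Theorem~\ref{thm:LLD}, together with the Gibbs--Markov trick of absorbing $\zeta_2\circ F^n$ for a $k$-cylinder function into $R^k$ applied to a regular function. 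Further, by the standard sandwich argument of~\cite{Breiman} (approximate $1_K$ from above and below by functions $\xi$ with $\hat\xi\in C_c$, justified by a Portmanteau argument like Lemma~\ref{lem:Port} using $|\partial K|=0$), it suffices to prove
\[
a_n^m\int_Y \zeta_1\,\zeta_2\circ F^n\,\xi(u_n-z_1a_n)\,1_{\{\hv^{(n)}=[z_2a_n]\}}\,d\mu\to g_{X_\alpha}(z)\int\zeta_1\int\zeta_2\int\xi
\]
uniformly in $|z|\le L$, for $\xi$ whose Fourier transform is continuous with compact support.

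Fourier representation. Write $\xi(x)=\int e^{is_1\cdot x}\hat\xi(s_1)\,ds_1$ and express the lattice indicator as $(2\pi)^{-m_2}\int_{[-\pi,\pi]^{m_2}}e^{is_2\cdot(\hv^{(n)}-[z_2a_n])}\,ds_2$. Duality then gives
\[
\int_{S}\hat\xi(s_1)\,e^{-is\cdot w_n}\int_Y R(s)^n\zeta_1\,\zeta_2\,d\mu\,ds,\qquad S=\R^{m_1}\times[-\pi,\pi]^{m_2},
\]
where $w_n$ collects the centring $z_1a_n$ and $[z_2a_n]+[n\int v]$ (the mean of $v$ is handled by a phase $e^{-is_2\cdot n\int v}$). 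Condition (1) of Lemma~\ref{lem:absLLD} holds, and the continuity estimates (2)(i)--(iii) for $R(s)$ on $\cF_\theta$ were verified in the proof of Theorem~\ref{thm:LLD} under~\eqref{eq:Psireg0}. Hence for $|s|<\eps$ we have $R(s)^n=\lambda(s)^nP(s)+Q(s)^n$ with $\|Q(s)^n\|\ll\delta^n$. The regular variation of $\Psi$, via the standard expansion (e.g.\ \cite{AD01}, \cite{MT22}), gives
\[
1-\lambda(s)\sim\int_{\bbS^{m-1}}|s\cdot x|^\alpha(\cdots)\,d\sigma\;\tilde\ell(1/|s|)\,|s|^\alpha .
\]
Nondegeneracy gives $|\lambda(s)|\le e^{-c|s|^\alpha\tilde\ell(1/|s|)}$. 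Substituting $s=\sigma/a_n$ and applying dominated convergence (the domination uses Potter bounds, as in~\cite[Lemma~2.3]{MT22}) yields
\[
a_n^m\int_{|s|<\eps}\cdots\to\hat\xi(0)(2\pi)^{m}\,g_{X_\alpha}(z)\int\zeta_1\int\zeta_2 ,
\]
uniformly in $|z|\le L$ since the phase $e^{-i\sigma\cdot z}$ is uniformly continuous in $z$. The normalisation constants are to be reconciled with the $(2\pi)^{-m}$ convention.

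Main obstacle: the region $\eps\le|s|$, $s\in\supp\hat\xi\times[-\pi,\pi]^{m_2}$, which is compact. Here we must show that $\sup_{s}\|R(s)^n\|_\theta\ll\rho^n$ for some $\rho<1$, so that this region contributes $O(a_n^m\rho^n)=o(1)$. The plan is to show that the spectral radius of $R(s)$ is strictly less than $1$ for each such $s$, and then to use continuity of $s\mapsto R(s)$ together with compactness. For the spectral radius, a Lasota--Yorke inequality for $R(s)$ on $\cF_\theta$, obtained from~\eqref{eq:Psireg0} and the H\"older bound $D_{b,\theta}e^{is\cdot\Psi}\ll|s|^r(\inf_b|\Psi|^r+1)$ as in the proof of Theorem~\ref{thm:LLD}, gives quasicompactness. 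An eigenvalue of modulus $1$ would then yield an eigenfunction which, by the standard argument (equality in $|R(s)v|\le R|v|$), solves $\phi\circ F=\lambda e^{is\cdot\Psi}\phi$. This contradicts aperiodicity, which for the lattice coordinates is understood modulo $2\pi\Z^{m_2}$. The delicate point is that $\Psi$ is unbounded and only locally H\"older with constants depending on $\inf_b|\Psi|$; it is handled by the choice $\theta\in(\theta_0^r,1)$ and $r<\alpha-1$, exactly as in Theorem~\ref{thm:LLD}.
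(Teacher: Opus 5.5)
Your proposal is correct and takes essentially the paper's route. The shared steps are Breiman's reduction to test functions with compactly supported Fourier transform, Fourier inversion through the twisted operators $R(s)$, the spectral decomposition near $s=0$ with the eigenvalue asymptotics producing $\E(e^{is\cdot X_\alpha})$, and aperiodicity plus compactness for exponential decay of $\|R(s)^n\|$ away from $s=0$ (Propositions~\ref{prop:R} and~\ref{prop:ap}, Lemma~\ref{lem:LLT}). The one superfluous step is your preliminary reduction of $\zeta_2\in L^1$ to bounded or cylinder functions, and as justified it does not work: Theorem~\ref{thm:LLD} bounds the $\mu$-measure of the event, not $\int|\zeta_2-\zeta_2'|\circ F^n$ over it. It is not needed, because your duality formula already gives the bound $\|R(s)^n\zeta_1\|_\infty\,|\zeta_2|_1$ for arbitrary $\zeta_2\in L^1$, exactly as in Lemma~\ref{lem:LLT}.
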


In the remainder of this subsection we prove Theorem~\ref{thm:LLT}.

Let $\tPsi=\big(u,v-\int_Y v\,d\mu\big)$ and
define $R(s)v=R(e^{is\cdot \tPsi}v)$.
Again, $R(0)1=1$ and $1$ is a simple eigenvalue for $R(0)$ by ergodicity.
As before, 
there exist $\eps>0$ and a family of eigenvalues $\lambda(s)$ for
$R(s)$ with corresponding spectral projections $P(s):\cF_\theta\to\cF_\theta$
such that $\lambda(0)=1$ and $P(0)v=\int_Y v\,d\mu$.

\begin{prop} \label{prop:R}
Let $\alpha'\in(0,\alpha)$.
There exist $\eps,\,c,\,C>0$, $\delta_0\in(0,1)$, such that
\begin{itemize}
\item[(a)] $R(s)=\lambda(s)P(s)+Q(s)$, \hspace{1em}
(b) $\|P(s)-P(0)\|\le C|s|$,
\item[(c)] $\|Q(s)\|^n\le C\delta_0^n$, \hspace{4.5em}
(d) $|\lambda(s)|\le \exp\{-c|s|^\alpha \tilde\ell(1/|s|)\}$,
\end{itemize}
for $|s|\le \eps$, $n\ge1$, and
\begin{itemize}
\item[(e)] 
$|\lambda(a_n^{-1}s)|^n\le Ce^{-c|s|^{\alpha'}}$ for $|s|\le \eps a_n$
and $n$ sufficiently large,
\item[(f)] $\lim_{n\to\infty}\lambda(a_n^{-1}s)^n= \E(e^{is\cdot X_\alpha})$
for all $s\in\R^m$.
\end{itemize}
\end{prop}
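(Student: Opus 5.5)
The plan is to combine analytic perturbation theory for the twisted transfer operators $R(s)$ on $\cF_\theta$ with an asymptotic expansion of the leading eigenvalue $\lambda(s)$. This is the standard route of Aaronson--Denker~\cite{AD01} and~\cite{MT22}.

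\emph{Parts (a)--(c).} First I will check that $s\mapsto R(s)$ is continuous as a map into the bounded operators on $\cF_\theta$. In fact the proof of Theorem~\ref{thm:LLD} already gives (2)(iii), namely $\|R(s+h)-R(s)\|\ll|h|$, using regularity~\eqref{eq:Psireg0} and integrability of $\Psi$. The term $\int_Y v\,d\mu$ in $\tPsi$ is harmless here: it only multiplies $R(s)$ by the unimodular constant $e^{-is_2\cdot\int v}$. Condition (1), a spectral gap for $R(0)$ with simple eigenvalue $1$, holds since $F$ is mixing Gibbs-Markov. Standard perturbation theory for isolated simple eigenvalues (Kato, or Keller--Liverani; only Lipschitz continuity is needed, not analyticity) then gives, for $|s|\le\eps$, the decomposition in (a) with $P(s)$ a rank-one projection and $Q(s)=R(s)(I-P(s))$. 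The bound (b) comes from writing $P(s)$ as a resolvent contour integral and using the Lipschitz bound. The bound (c) follows because the spectral radius of $Q(s)$ stays below some $\delta_0<1$ uniformly for small $s$.

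\emph{Part (d).} This is the main obstacle. Normalise $P(0)1=1$ and write
\[
\lambda(s)=\int_Y R(s)P(s)1\,d\mu\Big/\int_Y P(s)1\,d\mu .
\]
This gives
\[
\lambda(s)=\int_Y e^{is\cdot\tPsi}\,d\mu+O\Big(\int_Y|e^{is\cdot\tPsi}-1|\,d\mu\;\|P(s)-P(0)\|\Big),
\]
the standard Aaronson--Denker expansion. The first term is the characteristic function of a regularly varying vector with index $-\alpha$. Its expansion is
\[
1-\int_Y e^{is\cdot\tPsi}\,d\mu= |s|^\alpha\tilde\ell(1/|s|)\,\Phi(s/|s|)\,(1+o(1)),
\]
where $\Phi$ is the exponent in~\eqref{eq:X} or~\eqref{eq:X2}, expressed through the spectral measure $\sigma$; this is the multivariate Tauberian computation underlying Lemma~\ref{lem:GM}(a). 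The $\tilde\ell$ appears in the case $\alpha=2$. The error term is of smaller order. However, a bound of order $|s|\cdot|s|$ is not sufficient in this regime, so I will instead iterate the eigen-equation once more, as in~\cite{AD01}. That yields an error
\[
O\Big(\Big(\int_Y|e^{is\cdot\tPsi}-1|\,d\mu\Big)^2\Big)=O\big(|s|^2\ell^2\big)=o\big(|s|^\alpha\tilde\ell(1/|s|)\big),
\]
which is valid because $\alpha>1$ and $\int_Y|e^{is\cdot\tPsi}-1|\,d\mu\ll|s|$ by integrability. Nondegeneracy (Definition~\ref{defn:nondeg}) gives $\operatorname{Re}\Phi\ge c_0>0$ on $\bbS^{m-1}$. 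Using $|1-x|\le e^{-\operatorname{Re}(1-x)}$ up to higher-order terms, and shrinking $\eps$, we obtain (d).

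\emph{Parts (e) and (f).} For (e), apply (d) at $a_n^{-1}s$:
\[
|\lambda(a_n^{-1}s)|^n\le\exp\{-c\,n a_n^{-\alpha}|s|^\alpha\tilde\ell(a_n/|s|)\}.
\]
Since $a_n^\alpha\sim n\tilde\ell(a_n)$, the exponent equals $|s|^\alpha\,\tilde\ell(a_n/|s|)/\tilde\ell(a_n)$. Potter's bounds give $\tilde\ell(a_n/|s|)/\tilde\ell(a_n)\gg|s|^{-(\alpha-\alpha')}$ for $1\le|s|\le\eps a_n$ and $n$ large, and the range $|s|\le1$ is trivial; together these give (e). For (f), the expansion from (d) gives
\[
n\log\lambda(a_n^{-1}s)=-n a_n^{-\alpha}|s|^\alpha\tilde\ell(a_n/|s|)\,\Phi(\hat s)(1+o(1))\to-|s|^\alpha\Phi(\hat s),
\]
by slow variation, where $\hat s=s/|s|$. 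The right-hand side is $\log\E e^{is\cdot X_\alpha}$ by~\eqref{eq:X} or~\eqref{eq:X2}. The delicate point throughout is controlling the eigenvalue error term at order $o(|s|^\alpha\tilde\ell)$ uniformly in direction. If the second-order iteration fails to give this, I would fall back on the $|s|^{1+r}$-type bounds from Theorem~\ref{thm:LLD}, which still beat $|s|^\alpha$ for small $r$ only if combined with the $\int|e^{is\cdot\Psi}-1|$ factor.
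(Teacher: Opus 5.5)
Your proposal is correct and follows essentially the paper's route: (a)--(d) come from perturbation theory and the eigenvalue expansion as in \cite{MT22}, (e) follows from (d) via $a_n^\alpha\sim n\tilde\ell(a_n)$ and Potter's bounds, and (f) comes from comparing $\lambda(s)$ with the characteristic function $\phi$ of $\Psi$. One correction: the first-order bound $\lambda(s)=\phi(s)+O(|s|^2)$ already suffices, and it is exactly what the paper uses (via $na_n^{-2}\to0$), because $|s|^2=o(|s|^\alpha\tilde\ell(1/|s|))$ both for $\alpha<2$ and for $\alpha=2$, where $\tilde\ell\to\infty$ since $\Psi\notin L^2$; so your second iteration is unnecessary, and ``because $\alpha>1$'' is not the relevant justification.
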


\begin{proof}
Properties (a)--(d) are unchanged from Subsection~\ref{app:N2}.
Since $a_n^\alpha\sim n\tilde\ell(a_n)$, it follows from~(d) that
$|\lambda(a_n^{-1}s)|^n\le \exp\{-nc|a_n^{-1}b|^\alpha \tilde\ell(a_n/|s|)\}
\le \exp\{-\frac12 c |s|^\alpha \tilde\ell(a_n/|s|)\tilde\ell(a_n)^{-1}\}$
for $n$ large. By Potter's bounds, there exists $c'$ such that
$|\lambda(a_n^{-1}s)|^n\le \exp\{-c'|s|^{\alpha'}\}$ for $n$ large,
proving~(e).

To prove part (f), 
first define the characteristic function
$\phi(s)=\int_Y e^{is\cdot\Psi}\,d\mu$, $s\in\R^m$.
Since $\Psi$ is in the domain of the stable law $X_\alpha$, we have that
$\lim_{n\to\infty}\phi(a_n^{-1}s)^n= \E(e^{is\cdot X_\alpha})$.

Following the proof of~\cite[Lemma~3.5]{MT22}, for $|s|<\eps$,
\[
\lambda(s)=\phi(s)+H(s)
\quad\text{where} \quad H(s)=O(|s|^2).
\]
Hence
\(
\lambda(s)= \phi(s)(1+\tH(s))
\)
where $\tH(s)=H(s)\phi(s)^{-1}$.
Shrinking $\eps$ so that $|\phi(s)|{>\frac12}$ for $|s|<\eps$,
we have $\tH(s)=O(|s|)^2$. 

Now fix $s\in\R^m$ and consider $n$ large so that $a_n^{-1}|s|<\eps$. Then 
\[
n\tH(a_n^{-1}s)\ll na_n^{-2}\sim n^{1-2/\alpha}\tilde\ell(a_n)^{-2/\alpha}
=o(1),
\]
 so $\lim_{n\to\infty}n\tH(a_n^{-1}s)=0$.
Hence,
\[
\lambda(a_n^{-1}s)^n=
\phi(a_n^{-1}s)^n(1+\tH(a_n^{-1}s))^n
\sim \phi(a_n^{-1}s)^n \exp\{n\tH(a_n^{-1}s)\}
\to \E(e^{is\cdot X_\alpha})
\]
as $n\to\infty$, completing the proof of~(f).
\end{proof}

\begin{prop} \label{prop:ap}
For any $M>\eps>0$,  there exist $C>0$, $\delta_0\in(0,1)$ such that
\[
\|R(s)^n\|\le C\delta_0^n
\quad\text{for $\eps\le |s|\le M$.}
\]
\end{prop}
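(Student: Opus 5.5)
The plan is the standard aperiodicity argument for Gibbs–Markov maps, in three steps: a uniform Lasota–Yorke inequality, a spectral-radius bound at each fixed $s$, and a compactness argument to make the bound uniform. Throughout, $s$ ranges over $\eps\le|s|\le M$. In the lattice coordinates, $e^{is_2\cdot \tilde v}$ is $2\pi\Z^{m_2}$-periodic in $s_2$ up to a unimodular constant factor. So, as in the modified aperiodicity definition of Section~\ref{sec:GMmap}, we may restrict $s_2$ to $[-\pi,\pi]^{m_2}$ and work with the compact set $S_{\eps,M}=\{s:\eps\le|s|\le M\}$, with $s_2$ taken in $[-\pi,\pi]^{m_2}$ and reduced mod $2\pi$.

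\emph{Step 1 (uniform Lasota--Yorke).} Using~\eqref{eq:Psireg0} and the estimate $D_{b,\theta}e^{is\cdot\tPsi}\ll |s|^r(\infb|\Psi|^r+1)$ from the proof of Theorem~\ref{thm:LLD} (with $\theta\in(\theta_0^r,1)$), the standard Gibbs–Markov computation gives constants $C_1,\rho\in(0,1)$ such that
\[
\|R(s)^nv\|_\theta\le C_1\big(\rho^n\|v\|_\theta+|v|_1\big)
\]
uniformly for $|s|\le M$. Since $\cF_\theta$ embeds compactly into $L^1$ on cylinders, Hennion's theorem then gives essential spectral radius at most $\rho<1$ for each $R(s)$. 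Moreover $s\mapsto R(s)$ is continuous in operator norm, by (2)(iii) of Lemma~\ref{lem:absLLD}, proved as in Theorem~\ref{thm:LLD}.

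\emph{Step 2 (no peripheral spectrum).} Fix $s\in S_{\eps,M}$ and suppose $R(s)\phi=\lambda\phi$ with $|\lambda|\ge1$ and $\phi\in\cF_\theta\setminus\{0\}$. Since $|R(s)\phi|\le R|\phi|$, we get $\int|\phi|\le\int R|\phi|=\int|\phi|$. Hence $|\lambda|=1$ and $R|\phi|=|\phi|$, so $|\phi|$ is constant because $1$ is a simple eigenvalue of $R(0)$ by mixing. Equality in $|R(e^{is\cdot\tPsi}\phi)|\le R|\phi|$ forces the standard consequence
\[
e^{is\cdot\tPsi}\phi=\lambda\,\phi\circ F \quad\text{a.e.}
\]
Normalising $|\phi|=1$, this is exactly a measurable solution of the functional equation in Definition~\ref{defn:aperiodic}, after absorbing the constant $e^{-is\cdot(0,\int v)}$ into $\lambda$ and taking conjugates. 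This contradicts aperiodicity, since $s\neq0$, and in the lattice coordinates $s_2\notin2\pi\Z^{m_2}$ unless $s_2=0$. Hence the spectral radius satisfies $\mathrm{sr}(R(s))<1$ for every $s\in S_{\eps,M}$.

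\emph{Step 3 (uniformity).} By the spectral radius formula, for each $s_0$ there are $N$ and $\kappa<1$ with $\|R(s_0)^N\|\le\kappa$. Norm continuity gives $\|R(s)^N\|\le(1+\kappa)/2$ on a neighbourhood of $s_0$. Compactness of $S_{\eps,M}$ yields finitely many such neighbourhoods. Combined with the uniform bound $\sup_{|s|\le M}\|R(s)^n\|\le C$ from Step 1, this gives $\|R(s)^n\|\le C\delta_0^n$ uniformly on $S_{\eps,M}$.

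The main obstacle is Step 1 for unbounded $\Psi$. One needs the Lasota–Yorke constant uniform in $|s|\le M$ despite the factor $|s|^r\infb|\Psi|^r$. This is handled by the summability $\sum_b\mu(b)\infb|\Psi|^r<\infty$, as in the proof of (2)(ii) in Theorem~\ref{thm:LLD}. Norm continuity in $s$ needs the same integrability.
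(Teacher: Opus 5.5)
Your proof follows the paper's argument: quasi-compactness of $R(s)$ on $\cF_\theta$, then exclusion of unimodular eigenvalues for $s\neq0$ by aperiodicity, then norm continuity and compactness. Two details differ. You use the equality case in $|R(s)\phi|\le R|\phi|$, where the paper uses the $L^2$-adjoint identity $\|U_s\phi-\bar\lambda\phi\|_2=0$; and your restriction to $s_2\in[-\pi,\pi]^{m_2}$ is a correct refinement that the paper leaves implicit, since for $s=(0,2\pi k)$ with $k\in\Z^{m_2}\setminus\{0\}$ the operator $R(s)$ is a unimodular multiple of $R$ and the bound fails. One minor repair is needed in Step 1. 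The summability $\sum_b\mu(b)\infb|\Psi|^r<\infty$ only bounds the twist contribution $\sum_j\theta^{n-j}\int|v|\,(1+|\Psi|^r)\circ F^j\,d\mu$ by a multiple of $|v|_\infty$, not $|v|_1$, and $|v|_\infty$ is not a compact weak norm for Hennion. Use instead the weak norm $|v|_p$ with $p<\infty$ conjugate to some $q$ with $rq<\alpha$: H\"older then controls the twist term, and the unit ball of $\cF_\theta$ is still relatively compact in $L^p$. Also, Step 3 needs only $\sup_{|s|\le M}\|R(s)\|<\infty$, not power-boundedness.
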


\begin{proof}
It is standard that the 
spectral radius of $R(s)$ is at most $1$ and
the essential spectral radius of $R(s)$ is less than $1$ for all $s$.
We claim that there are no eigenvalues on the unit circle for $s\neq0$.
Then the spectral radius is less than $1$ for all $s\neq0$ and the result follows from
continuity of $s\mapsto R(s)$.

It remains to prove the claim. Suppose that there exist
$\lambda\in \bbS^1$ and $v\in\cF_\theta$ such that
$R(s)v=\lambda v$.
Define $U_s:L^2\to L^2$, $U_sv=e^{-is\cdot \tPsi}v\circ F$.
Then $U_s$ is the $L^2$ adjoint of $R(s)$ and a calculation shows that
$\|U_sv-\bar\lambda v\|_2=0$.
Hence $v\circ F=\bar\lambda e^{is\cdot \tPsi}v$.
In particular, $|v|\circ F=|v|$, so $|v|\equiv1$ by ergodicity.
Since $s\neq0$, this violates the aperiodicity assumption. We conclude that there are no eigenvalues on the unit circle, verifying the claim.
\end{proof}

The next result is formulated so as to facilitate the proof of the stable MLLT in the two sided case (see Subsection~\ref{app:N1two} below).

\begin{lemma} \label{lem:LLT}
Let $g\in C_c(\R^m)$ and define
\begin{align*}
G_{n,z} & =
 \int_{\R^m} g(s)\int_Y \zeta_1(s)\;\zeta_2(s)\circ F^n e^{is\cdot \tPsi_n}\,d\mu\,ds, \\ 
 I_{n,z} & =
 \int_{\R^m}1_{\{|s|< \eps }\} g(s)
\Big(\int_Y \zeta_1(s)\,d\mu\Big)
\Big(\int_Y \zeta_2(s)\,d\mu\Big)
\lambda(s)^n  \,ds.
\end{align*}

There is a constant $C>0$ such that
\[
|G_{n,z}- I_{n,z}|
\le C\big({\textstyle\sup_s}\|\zeta_1(s)\|\,|\zeta_2(s)|_1\big)
a_n^{-(m+1)}, 
\]
for all $\zeta_1:\R^m\to \cF_\theta$, $\zeta_2:\R^m\to  L^1$, $n\ge1$. 
\end{lemma}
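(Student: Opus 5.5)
Presumably $g$ here plays the role of a compactly supported weight in frequency space; the bound must hold uniformly in $n$. The plan is to rewrite the inner integral through the transfer operator. By duality,
\[
\int_Y \zeta_1(s)\,\zeta_2(s)\circ F^n\, e^{is\cdot\tPsi_n}\,d\mu=\int_Y R(s)^n\zeta_1(s)\;\zeta_2(s)\,d\mu ,
\]
using $R(s)^n v=R^n(e^{is\cdot\tPsi_n}v)$. Since $|v|_\infty\le\|v\|_\theta$, this is bounded by $\|R(s)^n\zeta_1(s)\|_\theta\,|\zeta_2(s)|_1$. We then split the $s$-integral over $\supp g\subset\{|s|\le M\}$ into the regions $|s|\ge\eps$ and $|s|<\eps$.

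On $\eps\le|s|\le M$, Proposition~\ref{prop:ap} gives $\|R(s)^n\|\le C\delta_0^n$. Hence this part of $G_{n,z}$ is $O(\delta_0^n\sup_s\|\zeta_1(s)\|\,|\zeta_2(s)|_1)$, which is $o(a_n^{-(m+1)})$ because $a_n$ grows polynomially. On $|s|<\eps$ we use Proposition~\ref{prop:R}(a): $R(s)^n=\lambda(s)^nP(s)+Q(s)^n$ (the spectral projection commutes and $P(s)Q(s)=0$). The $Q(s)^n$ term is again $O(\delta_0^n)$ by (c). For the main term we write $P(s)=P(0)+(P(s)-P(0))$. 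The $P(0)$ part gives exactly $\lambda(s)^n\int\zeta_1(s)\,d\mu\int\zeta_2(s)\,d\mu$, whose integral is $I_{n,z}$. The remainder is bounded by $C|s|\,|\lambda(s)|^n\|\zeta_1(s)\|\,|\zeta_2(s)|_1$ by (b).

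The main step is therefore to show
\[
\int_{|s|<\eps}|s|\,|\lambda(s)|^n\,ds\ll a_n^{-(m+1)}.
\]
Substituting $s=a_n^{-1}\sigma$ turns the integral into $a_n^{-(m+1)}\int_{|\sigma|<\eps a_n}|\sigma|\,|\lambda(a_n^{-1}\sigma)|^n\,d\sigma$. By Proposition~\ref{prop:R}(e), for $n$ large this is at most $C a_n^{-(m+1)}\int_{\R^m}|\sigma|e^{-c|\sigma|^{\alpha'}}\,d\sigma\ll a_n^{-(m+1)}$. For the finitely many small $n$, the bound is trivial after enlarging $C$. The factor $\sup|g|\cdot$(measure of the support) is absorbed into $C$. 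Combining the three pieces gives $|G_{n,z}-I_{n,z}|\le C\sup_s\|\zeta_1(s)\|\,|\zeta_2(s)|_1\,a_n^{-(m+1)}$.

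The main obstacle is the bookkeeping in the lattice coordinates. For $m_2>0$ the $\Z^{m_2}$ frequencies live on a torus, and $R(s)$ is periodic in $s_2$. So either $g$ is supported in a fundamental domain, or the region $|s|\ge\eps$ must avoid the other lattice points $2\pi\Z^{m_2}$. Aperiodicity, as modified in Section~\ref{sec:GMmap}, only excludes $s\in S\setminus\{0\}$. I would handle this by noting that Proposition~\ref{prop:ap} is applied on the compact set $\supp g\setminus\{|s|<\eps\}$, and shrinking $\eps$ if necessary. If $\supp g$ meets $2\pi\Z^{m_2}\setminus\{0\}$, I would reduce to a fundamental domain by periodicity, since this is how $g$ arises when the lemma is applied.
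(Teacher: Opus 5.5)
Your proposal is correct and follows the paper's proof: duality to $\int_Y\zeta_2(s)\,R(s)^n\zeta_1(s)\,d\mu$, Proposition~\ref{prop:ap} on $\eps\le|s|\le M$, and on $|s|<\eps$ the splitting $R(s)^n=\lambda(s)^nP(0)+\lambda(s)^n(P(s)-P(0))+Q(s)^n$; the only cosmetic difference is that you bound $\int_{|s|<\eps}|s|\,|\lambda(s)|^n\,ds$ by rescaling and Proposition~\ref{prop:R}(e), where the paper uses (d) and the change of variables from~\cite{MT22}. Your lattice remark points to a real imprecision in the statement, but the fix is to add the hypothesis $\supp g\subset\R^{m_1}\times[-\pi,\pi]^{m_2}$ (this holds in the application in Theorem~\ref{thm:LLT}, where $g$ is merely bounded, which your argument allows), not to reduce by periodicity: since $R(s+(0,2\pi k))=e^{-2\pi i k\cdot\int_Y v\,d\mu}R(s)$ for $k\in\Z^{m_2}$, a $g$ whose support meets $\{0\}\times(2\pi\Z^{m_2}\setminus\{0\})$ picks up a contribution of order $a_n^{-m}$ absent from $I_{n,z}$, so the bound fails in general.
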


\begin{proof}
Note that
\[
G_{n,z}
 =\int_{\R^m}g(s)\int_Y \zeta_2(s)\, R(s)^n\zeta_1(s) \,d\mu\,ds.
\]

Let $\eps>0$ be as in Proposition~\ref{prop:R} and
choose $M>\eps$ so that $\supp g\subset B_M(0)$.
By Proposition~\ref{prop:R}(a), for $|s|\le\eps$,
\[
R(s)^n=\lambda(s)^nP(0)+\lambda(s)^n(P(s)-P(0))+Q(s)^n.
\]
Hence
\(
G_{n,z}= I_1+I_2+I_3+I_4
\)
where
\begin{align*}
I_1 & = \int_{\R^m}
1_{\{|s|<\eps\}} g(s)\int_Y\zeta_2(s) \lambda(s)^n P(0) \zeta_1(s)\,d\mu\,ds
\\ & = \int_{\R^m}
1_{\{|s|<\eps\}} g(s)\mu(\zeta_1(s))\mu(\zeta_2(s))\lambda(s)^n \,ds, 
\\
I_2 & = \int_{\R^m}
1_{\{|s|<\eps\}}g(s)
\int_Y \zeta_2(s)\, \lambda(s)^n \big(P(s)-P(0)\big) \zeta_1(s)
\,d\mu\,ds, \\
I_3 & = \int_{\R^m}
1_{\{|s|<\eps\}}g(s)
\int_Y \zeta_2(s)\, Q(s)^n \zeta_1(s)
\,d\mu\,ds, \\
I_4 & = \int_{\R^m}
1_{\{\eps\le |s|\le M\}}g(s)
\int_Y \zeta_2(s)\, R(s)^n \zeta_1(s) \,d\mu\,ds.
\end{align*}

By Proposition~\ref{prop:ap},
\begin{align*}
|I_4|  
& \le |g|_\infty\, \int_{\R^m}
1_{\{\eps\le |s| \le M\}}
|\zeta_2(s)|_1\, |R(s)^n\zeta_1(s)|_\infty \,ds \\ & 
\le |g|_\infty\, \int_{\R^m}
1_{\{\eps\le |s| \le M\}}
|\zeta_2(s)|_1\, \|R(s)^n\| \|\zeta_1(s)\|\,ds
\ll \kappa\delta_0^n,
\end{align*}
where $\kappa= {\textstyle \sup_s}\|\zeta_1(s)\| \,|\zeta_2(s)|_1$.
Similarly, by Proposition~\ref{prop:R}(c), $|I_3|\ll   
 \kappa \delta_0^n$.
Also,
\begin{align*}
|I_2| & \ll  
 \kappa 
\int_{\R^m} 1_{\{|s|< \eps\}}
   |\lambda(s)|^n\|P(s)-P(0)\|\,ds
\ll  \kappa\int_{\R^m}\exp\{-nc|s|^\alpha \tilde\ell(1/|s|)\} |s|\,ds
\end{align*}
by Proposition~\ref{prop:R}(b,d). A change of variables (\cite[Lemma~2.3]{MT22})
now shows that $|I_2|\ll \kappa a_n^{-(m+1)}$.
\end{proof}

\begin{pfof}{Theorem~\ref{thm:LLT}}
Define the family of measures on $\R^{m_1}$,
\[
\nu_{n,z}(K)=a_n^m\,\int_Y \zeta_1\;\zeta_2\circ F^n\;1_{\{u_n\in z_1a_n+K\}}
1_{\{\hv^{(n)}=[z_2a_n]\}}\,d\mu.
\]
Also define $\nu_z(K)=g_{X_\alpha}(z)
\int_Y\zeta_1\,d\mu\, \int_Y\zeta_2\,d\mu\, |K|$.
We claim that $\lim_{n\to\infty,\,w\to z}\int_{\R^{m_1}}h\,d\nu_{n,w}=\int_{\R^{m_1}} h\,d\nu_z$
for all $h\in L^1(\R^{m_1})$ with compactly supported Fourier transform $\hat h$.
It then follows from~\cite[Theorem~10.7]{Breiman} that 
$\lim_{n\to\infty,\,w\to z}\nu_{n,w}(K)\to\nu_z(K)$ for all $K\subset\R^{m_1}$ bounded with $|\partial K|=0$. 
Now fix such a $K$. It follows easily that 
$\lim_{n\to\infty}\nu_{n,w}(K)\to\nu_w(K)$ uniformly for $w$ in a neighbourhood of $z$ 
for each fixed $z\in\R^m$. By compactness, we obtain uniform convergence for $|z|\le L$.

It remains to verify the claim.
Note that
$h(x)=\int_{\R^{m_1}}e^{is_1\cdot x}\hat h(s_1)\,ds_1$ and
\begin{align*}
1_{\{\hv^{(n)}=[w_2a_n]\}}
& =(2\pi)^{-m_2}\int_{[-\pi,\pi]^{m_2}}e^{-is_2\cdot [w_2a_n]}e^{is_2\cdot \hv^{(n)}}\,ds
\\ & =
(2\pi)^{-m_2}\int_{[-\pi,\pi]^{m_2}}e^{-is_2\cdot([w_2a_n]+c_n)}e^{is_2\cdot \tv_n}\,ds,
\end{align*}
where $\tv=v-\int_Y v\,d\mu$ and
$c_n=\big[n\int_Y v\,d\mu\big]-n\int_Y v\,d\mu$.
By Fubini,
\begin{align*}
& \int_{\R^{m_1}}h\,d\nu_{n,w}
 =a_n^m\int_Y \zeta_1\;\zeta_2\circ F^n\, h(u_n-w_1a_n)1_{\{\hv^{(n)}=[w_2a_n]\}}\,d\mu
\\ & \quad =(2\pi)^{-m_2}a_n^m\int_{\R^{m_1}\times[-\pi,\pi]^{m_2}}e^{-i s_1\cdot w_1a_n}e^{-is_2\cdot([w_2a_n]+c_n)}\hat h(s_1)\int_Y \zeta_1\;\zeta_2\circ F^n\, e^{is\cdot \tPsi_n}\,d\mu\,ds.
\end{align*}
Let $\zeta=(2\pi)^{-m_2}\mu(\zeta_1)\mu(\zeta_2)$.
Applying Lemma~\ref{lem:LLT}
with 
\[
g(s)=e^{-i s_1\cdot w_1a_n}e^{-is_2\cdot([w_2a_n]+c_n)}\hat h(s_1)1_{\{s_2\in[-\pi,\pi]^{m_2}\}},
\]
we obtain
\begin{align*}
\int_{\R^{m_1}}h\,d\nu_{n,w}
& =a_n^m \zeta
 \int_{|s|< \eps }e^{-is_1\cdot w_1a_n}e^{-is_2\cdot([w_2a_n]+c_n)} \hat h(s_1)\lambda(s)^n\, ds +O(a_n^{-1})
\\ & = 
\zeta
\int_{|s|< a_n \eps }e^{-is_1\cdot w_1}e^{-ia_n^{-1}s_2\cdot([w_2a_n]+c_n)} \hat h(a_n^{-1}s_1) \lambda(a_n^{-1}s)^nds + O(a_n^{-1}).
\end{align*}
By Proposition~\ref{prop:R}(e,f),
it follows from the dominated convergence theorem that
\[
\lim_{n\to\infty,\,w\to z}  
\int_{\R^{m_1}}h\,d\nu_{n,w} = 
\zeta
\int_{\R^m}\hat h(0)\,e^{-is\cdot z}\, \E(e^{is\cdot X_\alpha})\,ds.
\]
Tracking through the definitions,
\[
\int_{\R^m}\hat h(0)\,e^{-is\cdot z}\, \E(e^{is\cdot X_\alpha})\,ds=
\hat h(0)(2\pi)^m g_{X_\alpha}(z)
=(2\pi)^{m_2}\int_{\R^{m_1}}h(x)\,dx\,g_{X_\alpha}(z).
\]
Hence, we have shown that
\[
\lim_{n\to\infty,\,w\to z} \int_{\R^{m_1}}h\,d\nu_{n,w}=
g_{X_\alpha}(z)
\mu(\zeta_1)\mu(\zeta_2)
\int_{\R^{m_1}} h\,dx =\int_{\R^{m_1}}h\,d\nu_z,
\]
as required.
\end{pfof}

\begin{prop} \label{prop:A}
Let $A\subset Y$ be measurable with $\mu(\partial A)=0$.
For any $\eps>0$, there exist $A_1,A_2\subset Y$ with $1_{A_1},\,1_{A_2}\in\cF_\theta$ such that $A_1\subset A\subset A_2$ and $\mu(A_2)-\mu(A_1)<\eps$.
\end{prop}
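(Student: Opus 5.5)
The plan is to approximate $A$ from inside and outside by finite unions of cylinder sets. Indicators of cylinders are automatically in $\cF_\theta$. Fix $N\ge1$ and let $\cC_N$ denote the collection of $N$-cylinders, i.e.\ the sets $\bigcap_{j=0}^{N-1}F^{-j}b_j$ with $b_j\in\beta$. These are the equivalence classes of the relation $s(y,y')\ge N$. For a cylinder $C\in\cC_N$ we have $D_{b,\theta}1_C\le\theta^{-N}$ for every $b\in\beta$: if $y,y'\in b$ with $1_C(y)\ne1_C(y')$, then $s(y,y')<N$, so $d_\theta(y,y')\ge\theta^N$. More generally, if $U$ is any union (possibly countable) of $N$-cylinders, then $1_U$ is constant on each $N$-cylinder, so $|1_U|_\theta\le\theta^{-N}$ and $1_U\in\cF_\theta$.

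The choices are as follows. Define
\[
A_1=\bigcup\{C\in\cC_N: C\subset A\},\qquad A_2=\bigcup\{C\in\cC_N: C\cap A\ne\emptyset\}.
\]
Then $A_1\subset A\subset A_2$, and both indicators lie in $\cF_\theta$ by the previous paragraph. It remains to show that $\mu(A_2\setminus A_1)\to0$ as $N\to\infty$, after which we take $N$ large.

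The key observation is that $A_2\setminus A_1$ is the union of those $N$-cylinders which meet both $A$ and $Y\setminus A$. Any point $y$ lying in such a cylinder $C$ has points of $A$ and of $Y\setminus A$ within $d_\theta$-distance $\theta^N$, since $\diam C\le\theta^N$ in the metric $d_\theta$. Hence $A_2\setminus A_1$ is contained in the closed $\theta^N$-neighbourhood $\partial_N A$ of the boundary, in the sense
\[
\partial_N A=\{y:\ \exists\, a\in A,\ a'\in Y\setminus A \text{ with } d_\theta(y,a),\,d_\theta(y,a')\le\theta^N\}.
\]
These sets decrease in $N$, and their intersection is contained in $\overline{A}\cap\overline{Y\setminus A}=\partial A$, where closures are taken in the $d_\theta$ topology. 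By continuity of measure, $\mu(A_2\setminus A_1)\le\mu(\partial_NA)\to\mu(\partial A)=0$.

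The main point needing care is that $\partial A$ in the hypothesis must be interpreted with respect to the topology in which cylinders are open. For a one-sided Gibbs–Markov map this is the $d_\theta$ topology, which is consistent with the paper's earlier remark in the proof of Proposition~\ref{prop:N4} that cylinders are open and closed. With that convention, the decreasing-intersection step is immediate. I would state this convention explicitly, and note that when $Y$ carries an ambient metric for which partition elements are open and $d\ll d_\theta$, the ambient boundary contains the $d_\theta$ boundary, so the hypothesis $\mu(\partial A)=0$ still suffices.
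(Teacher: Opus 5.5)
Your proof is correct, but it takes a different route from the paper's.

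\textbf{The paper's route.} The paper exhausts $\Int A$ and $\Int(Y\setminus A)$ from inside by disjoint (closures of) cylinders of varying depths, chosen greedily. It then truncates to finite unions $A_1=\bigcup_{j\le J}b_j$ and $D$, and sets $A_2=Y\setminus D$. The hypothesis $\mu(\partial A)=0$ enters only as $\mu(\Int A)=\mu(A)$ and $\mu(\Int(Y\setminus A))=\mu(Y\setminus A)$. Finiteness of the unions is what bounds the depth and so places the indicators in $\cF_\theta$.

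\textbf{Your route.} You work at a single depth $N$. Your observation that any union of $N$-cylinders, even a countable one, satisfies $|1_U|_\theta\le\theta^{-N}$ removes the need to truncate. The boundary hypothesis then enters through continuity from above for the decreasing sets $A_2\setminus A_1$, whose intersection lies in $\partial A$. Since $d_\theta$ is an ultrametric whose closed balls of radius $\theta^N$ are exactly the $N$-cylinders, your $\partial_NA$ coincides with $A_2\setminus A_1$. So its measurability is automatic, and you could skip $\partial_NA$ altogether.

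\textbf{What your version gains.}
\begin{itemize}
\item It is shorter, and it produces nested approximations.
\item It avoids having to justify that the greedy selection exhausts $\Int A$ up to a null set. That step needs either a density argument along the cylinder filtration or the remark that the maximal cylinders inside $\Int A$ tile it.
\item It uses cylinders rather than their closures, so the indicators genuinely lie in $\cF_\theta$.
\end{itemize}

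\textbf{The topology convention.} Both proofs need $\partial A$ to be taken in a topology in which cylinders are open. The paper uses this tacitly when it writes $\Int A$ as a union of cylinders; you state it explicitly. Your remark that $d\ll d_\theta$ gives $\partial_{d_\theta}A\subset\partial_dA$ is what covers the interval applications. Openness of partition elements is not actually needed for that inclusion, since $Y$ is bounded and $d_\theta=1$ across distinct partition elements.
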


\begin{proof}
Let $\beta_n$ be the set of closures of $n$-cylinders in $Y$
and let $\beta=\bigcup_{n\ge1}\beta_n$.
Using the greedy algorithm, we write $\Int A$ as a disjoint countable union $\Int A=\bigcup_{j\ge 1}b_j$ where $b_j\in \beta$.
(At the $j$'th step, choose $b_j\in\beta$ such that $\mu(b_j)\ge\frac12\sup\{b\in \beta:b\subset \Int A\setminus (b_1\cup\dots\cup b_{j-1})$.)
Setting $A_1=\bigcup_{j=1}^J b_j$ we obtain $A_1\subset A$ such that $1_{A_1}\in\cF_\theta$ and $\mu(A)-\mu(A_1)<\eps/2$.
Similarly, we construct $D\subset \Int A^c$ such that $1_D\in\cF_\theta$ and $\mu(A^c)-\mu(D)<\eps/2$.
Let $A_2=D^c$. Then $1_{A_2}=1_Y- 1_D\in\cF_\theta$
and $\mu(A_2)-\mu(A)<\eps/2$.~
\end{proof}

\begin{cor} \label{cor:LLT}
Let $A,B\subset Y$ be measurable sets with $\mu(\partial A)=0$.
Let $K\subset\R^{m_1}$ be bounded with $|\partial K|=0$, and let $L>0$. Then
\begin{align*}
\lim_{n\to\infty} \sup_{\genfrac{}{}{0pt}{}{z\in\R^m}{|z|\le L}}
\Big|a_n^m\mu\big(y\in A\cap  F^{-n}B:\, u_n(y)\in z_1a_n + & K,\,
\hv^{(n)}(y)=[z_2a_n]\big)
\\ & - g_{X_\alpha}(z)\mu(A)\mu(B)|K|\Big|=0.
\end{align*}
\end{cor}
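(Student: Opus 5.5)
The plan is a sandwiching argument that reduces Corollary~\ref{cor:LLT} to Theorem~\ref{thm:LLT}. We apply the theorem with $\zeta_1=1_{A_i}\in\cF_\theta$ from Proposition~\ref{prop:A} and $\zeta_2=1_B\in L^1$.

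Fix $\eps>0$ and use Proposition~\ref{prop:A} to choose $A_1\subset A\subset A_2$ with $1_{A_1},1_{A_2}\in\cF_\theta$ and $\mu(A_2)-\mu(A_1)<\eps$. Because the integrand in Theorem~\ref{thm:LLT} is an indicator of a set, taking $\zeta_1=1_{A_i}$ and $\zeta_2=1_B$ gives
\[
a_n^m\mu\big(y\in A_i\cap F^{-n}B:\,u_n\in z_1a_n+K,\ \hv^{(n)}=[z_2a_n]\big)\to g_{X_\alpha}(z)\mu(A_i)\mu(B)|K|
\]
uniformly for $|z|\le L$, for $i=1,2$. By monotonicity in the first set, the quantity for $A$ lies between the quantities for $A_1$ and $A_2$. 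Hence, for all large $n$ and uniformly in $|z|\le L$,
\[
\Big|a_n^m\mu(\cdots)-g_{X_\alpha}(z)\mu(A)\mu(B)|K|\Big|\le \sup_{|z|\le L}g_{X_\alpha}(z)\,|K|\,\eps+o(1).
\]
The limit $g_{X_\alpha}$ is continuous, since it is the inverse Fourier transform of an integrable characteristic function by nondegeneracy. It is therefore bounded on $\{|z|\le L\}$. Letting $\eps\to0$ gives the corollary.

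Only $A$ needs the boundary condition, because $\zeta_2$ is only required to be $L^1$. There are two points to check. First, the uniformity in $z$ must survive the sandwich; it does, because the sandwich inequality holds pointwise in $z$ and each bound converges uniformly. Second, the approximating sets must be genuinely measurable with Lipschitz indicators. This is exactly what Proposition~\ref{prop:A} supplies, so the main potential obstacle has already been handled there. The remaining work is to note that $\|1_{A_i}\|_\theta<\infty$, which holds because $A_i$ is a finite union of cylinders or the complement of one.
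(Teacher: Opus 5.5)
Your proposal is correct and matches the paper's proof. Both sandwich $A$ between the sets $A_1\subset A\subset A_2$ from Proposition~\ref{prop:A}, apply Theorem~\ref{thm:LLT} with $\zeta_1=1_{A_i}\in\cF_\theta$ and $\zeta_2=1_B\in L^1$, and let $\eps\to0$. The only cosmetic difference is that you carry the uniformity over $|z|\le L$ directly through the sandwich, whereas the paper phrases the sandwich through joint limits $n\to\infty$, $w\to z$ and leaves the passage to uniformity in $z$ implicit.
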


\begin{proof}
The result is immediate from Theorem~\ref{thm:LLT} if we suppose in addition that
$1_A\in\cF_\theta$, so it remains to relax this condition.
For this step, we can regard $B$ and $K$ as fixed.
Let $A\subset Y$ be measurable with $\mu(\partial A)=0$ and let $\eps>0$.
Choose $A_1,A_2$ as in Proposition~\ref{prop:A}.
Let 
\[
q_{n,z}(E)=a_n^m\,\mu(y\in E\cap F^{-n}B:u_n(y)\in z_1a_n+K,\,\hv^{(n)}(y)=z_2a_n)
\]
and set $c=g_{X_\alpha}(z)\mu(B)|K|$.
Then
\begin{align*}
c\mu(A_1) & =\lim_{n\to\infty,\,w\to z} q_{n,w}(A_1)
\le \liminf_{n\to\infty,\,w\to z} q_{n,w}(A)
\\ &\le \limsup_{n\to\infty,\,w\to z} q_{n,w}(A)
 \le \lim_{n\to\infty,\,w\to z} q_{n,w}(A_2)=c\mu(A_2).
\end{align*}
Hence
$c(\mu(A)-\eps)\le \liminf_{n\to\infty,\,w\to z} q_{n,w}(A)
\le \limsup_{n\to\infty,\,w\to z} q_{n,w}(A)
\le c(\mu(A)+\eps)$.
Since $\eps>0$ is arbitrary,
$\lim_{n\to\infty,\,w\to z}q_{n,w}(A)=c\mu(A)$ and the result follows.
\end{proof}

An alternative reformulation of Corollary~\ref{cor:LLT} is as follows:

\begin{cor}  \label{cor:LLTvague}
Let $\zeta_1,\,\zeta_2:Y\to\R$
be bounded almost every continuous functions,
$\xi\in C_c(\R^m)$, $L>0$. Then
\begin{align*}
\lim_{n\to\infty} \sup_{\genfrac{}{}{0pt}{}{z\in\R^m}{|z|\le L}}
a_n^m\int_Y \zeta_1\;\zeta_2\circ F^n &  \;\xi\circ(u_n-z_1a_n,\hv^{(n)}]-[z_2a_n])\,d\mu
\\ & \qquad =g_{X_\alpha}(z)\int_Y \zeta_1\,d\mu
\int_Y \zeta_2\,d\mu
\int_{\R^m} \xi\,d\Leb.
\end{align*}
\end{cor}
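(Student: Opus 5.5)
The plan is to deduce Corollary~\ref{cor:LLTvague} from Corollary~\ref{cor:LLT} by the standard passage from sets to test functions, in two stages. First I handle the $\zeta$'s, then $\xi$, and I keep uniformity in $|z|\le L$ throughout.

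\emph{Stage 1: indicator test functions.} Set $\zeta_1=1_A$ and $\zeta_2=1_B$, where $\mu(\partial A)=0$ and $\mu(\partial B)=0$. For fixed such $A$ and $B$, and each $z$, consider the measure $\lambda_{n,z}$ on $\R^{m_1}\times\Z^{m_2}$ defined by
\[
\lambda_{n,z}(E)=a_n^m\mu\big(y\in A\cap F^{-n}B:\,(u_n-z_1a_n,\hv^{(n)}-[z_2a_n])\in E\big).
\]
Corollary~\ref{cor:LLT} applies with $z$ replaced by $(z_1,z_2+k/a_n)$, for $k$ in a fixed finite set, since then $[z_2a_n+k]=[z_2a_n]+k$. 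It gives $\lambda_{n,z}(K\times\{k\})\to g_{X_\alpha}(z)\mu(A)\mu(B)|K|$, using continuity of $g_{X_\alpha}$ to absorb the shift $k/a_n\to0$. This convergence is uniform in $|z|\le L$.

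Now let $\xi\in C_c(\R^m)$, restricted to $\R^{m_1}\times\Z^{m_2}$. Only finitely many $k$ meet $\supp\xi$. For each $k$, Lemma~\ref{lem:Port}, applied with parameter $\gamma=z$, converts setwise convergence into convergence of $\int\xi(\cdot,k)\,d\lambda_{n,z}(\cdot,k)$, uniformly in $z$. Summing over $k$ gives the result for indicators. Here $\int\xi\,d\Leb$ is read with counting measure on the $\Z^{m_2}$ factor, consistent with the lattice conventions of the paper.

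\emph{Stage 2: general $\zeta$'s.} By linearity, and by splitting $\zeta_i$ into positive and negative parts plus constants, I reduce to $\zeta_i\ge0$ bounded and a.e.\ continuous. I would then approximate $\zeta_1$ from above and below by finite linear combinations of indicators $1_{A_j}$. Take $A_j=\{c_{j-1}<\zeta_1\le c_j\}$, choosing levels $c_j$ with $\mu(\zeta_1=c_j)=0$; only countably many levels are excluded. Since $\zeta_1$ is continuous a.e., $\partial A_j\subset\{\zeta_1\in\{c_{j-1},c_j\}\}\cup\{\text{discontinuity points}\}$, which is $\mu$-null. Do the same for $\zeta_2$.

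The error from replacing $\zeta_i$ by its step approximation is at most $\eps\sup|\zeta_{3-i}|$ times $a_n^m\int|\xi|\circ(\cdots)\,d\mu$. By Stage 1 applied to $A=B=Y$ and to $|\xi|$, this last quantity is bounded uniformly in $n$ and $|z|\le L$ by $\sup g_{X_\alpha}\int|\xi|+o(1)$. Letting $\eps\to0$ completes the proof.

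\emph{Main obstacle.} The delicate point is bookkeeping rather than analysis. First, the shift in the lattice coordinate has to be handled so that Corollary~\ref{cor:LLT} applies uniformly. Second, the a.e.\ continuity of $\zeta_i$ has to yield level sets with null boundary. Both rely on choosing the levels outside a countable exceptional set and on the uniform bound from Stage 1 with $A=B=Y$.
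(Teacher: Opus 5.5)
Your proposal is correct and follows the paper's proof: you handle $\xi$ by Lemma~\ref{lem:Port} and $\zeta_1,\zeta_2$ by the same level-set approximation (levels $c$ with $\mu(\zeta_i=c)=0$, using Stage~1 with $A=B=Y$ and $|\xi|$ in place of the bound $\lambda_t^\gamma(D)\le\lambda(D)+1$), and your treatment of the lattice shift $z_2\mapsto z_2+k/a_n$ and your reading of $\int\xi\,d\Leb$ with counting measure on $\Z^{m_2}$ make explicit details the paper leaves implicit. One caveat, which the paper shares, is that Lemma~\ref{lem:Port} is stated for a limit $\lambda$ independent of the parameter, whereas here the limit $g_{X_\alpha}(z)\mu(A)\mu(B)\Leb$ depends on $z$; this is harmless, since the (b)$\Rightarrow$(a) argument goes through verbatim for limits $c_\gamma\Leb$ with $\sup_\gamma c_\gamma<\infty$.
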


\begin{proof}
The reformulation in terms of $\xi$ for the $\R^m$-variables follows directly from Lemma~\ref{lem:Port}. The reformulation in terms of $\zeta_1,\zeta_2$ for the $Y$-variables works the same way. The argument that (b) implies (a) in the proof of Lemma~\ref{lem:Port} does not use local compactness and is in fact easier here since there is no need to introduce the bounded set $D$.
\end{proof}

\subsection{Stable MLLT for two-sided Gibbs-Markov maps}
\label{app:N1two}

In this subsection, we consider the stable MLLT for mixing two-sided Gibbs-Markov maps $(F,Y,\mu)$.  

Assume that $\Psi=(u,v):Y\to\R^{m_1}\times\Z^{m_2}$ is integrable but not square integrable with $\int_Y u\,d\mu=0$.
Let $\hv^{(n)}= v_n -\big[n\int_Y v\,d\mu\big]$.

\begin{thm} \label{thm:LLTtwo}
Let $(F,Y,\mu)$ be a mixing two-sided Gibbs-Markov map with one-sided quotient
$\bpi:Y\to\bY$, and
let $\cA$ denote the collection of measurable subsets $A\subset Y$ with $\mu(\partial A)=0$.

Assume that $\Psi=(u,v):Y\to\R^{m_1}\times\Z^{m_2}$ is aperiodic, nondegenerate, regularly varying with index $-\alpha$,  $\alpha\in(1,2]$.
Suppose further that $v$ is locally constant and that $u$ admits a decomposition of the form
\begin{equation} \label{eq:chi0}
u=\bu\circ\bar\pi+\chi\circ F-\chi,
\end{equation}
where $\bu:\bY\to\R^{m_1}$ satisfies~\eqref{eq:Psireg0} and
$\chi|_A:A\to\R^{m_1}$ is uniformly continuous
for each $A\in\cA$.

Then
\begin{align*}
\lim_{n\to\infty} \sup_{\genfrac{}{}{0pt}{}{z\in\R^m}{|z|\le L}}
\Big|a_n^m\mu\big(y\in A\cap  F^{-n}B:\, u_n(y) \in z_1a_n +K, &
\, \hv^{(n)}(y)=[z_2a_n]   \big)
\\ & - g_{X_\alpha}(z)\mu(A)\mu(B)|K|\Big|=0,
\end{align*}
for all $A,B\in\cA$,
$K\subset\R^{m_1}$ bounded with $|\partial K|=0$, and $L>0$.
\end{thm}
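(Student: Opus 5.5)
The plan is to reduce to the one-sided result (Theorem~\ref{thm:LLT} and Corollary~\ref{cor:LLT}) for $\bPsi=(\bu,v)$ on $\bY$. Since $v$ is locally constant, it factors through $\bpi$: $v=\bv\circ\bpi$. We then have $u_n=\bu_n\circ\bpi+\chi\circ F^n-\chi$. As in the proof of Lemma~\ref{lem:GM2}, $\bPsi$ inherits regular variation, nondegeneracy and aperiodicity from $\Psi$, because $\chi$ only shifts the cocycle by a coboundary. (For aperiodicity, given a solution $\phi$ for $\Psi$, set $\tilde\phi=\phi e^{-is_1\cdot\chi}$. Conversely, given a solution $\bar\phi$ on $\bY$, the function $\bar\phi\circ\bpi\, e^{is_1\cdot\chi}$ solves the equation for $\Psi$.) The coboundary $\chi\circ F^n-\chi$ will be removed at the end by Lemma~\ref{lem:vague}, with $H_1=-\chi$ and $H_2=\chi$. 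Property~\ref{itm:H} for $\chi$ on sets in $\cA$ follows from uniform continuity of $\chi|_A$, exactly as in Proposition~\ref{prop:N4}, using partitions of $A$ into pieces of small diameter in the metric of $Y$ with null boundary. To get uniformity in $|z|\le L$, I take $\gamma=z$ as the parameter in Lemma~\ref{lem:vague}. It therefore suffices to prove the statement with $u_n$ replaced by $\bu_n\circ\bpi$, uniformly in $z$.

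The main obstacle is that $A,B\subset Y$ are not $\bpi$-saturated, so Corollary~\ref{cor:LLT} on $\bY$ does not apply directly. I would use the standard trick of pushing $A$ forward by $F^k$. For $k$ fixed, $F^{-k}$ of a set on $Y$ is approximately a union of sets determined by the future coordinates. More precisely, the contraction $d(F^ky,F^ky')\le C(\theta^k+\theta^{s(y,y')-k})$ shows that $F^k$ maps each $2k$-cylinder to a set of diameter $O(\theta^k)$. Concretely, write
\[
\mu(A\cap F^{-n}B\cap\{\cdots\})=\mu(F^kA\cap F^{-(n-k)}B\cap\{\bu_n\circ\bpi\circ F^{-k}\in\cdots\}).
\]
Then $\bu_n\circ\bpi\circ F^{-k}=\bu_{n-k}\circ\bpi+(\bu_k\circ\bpi\circ F^{-k})$. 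The extra term is a bounded perturbation that is uniformly continuous on pieces, and can again be absorbed by Lemma~\ref{lem:vague}. This leaves us needing an MLLT for $\mu(A'\cap F^{-m}B\cap\{\bu_m\circ\bpi\in\cdot\})$ with $A'=F^kA$.

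For $B$, I approximate from inside and outside: by Proposition~\ref{prop:A}-type arguments, $B$ is sandwiched between finite unions of cylinders (up to measure $\eps$), since $\mu(\partial B)=0$ and $\theta^{s}$-balls shrink. The analogous sandwiching for $A'=F^kA$ uses that $F^kA$ is within measure $O(\mu(\theta^k\text{-neighbourhood of }\partial A))\to0$ of a union of $\bpi$-saturated sets, namely $\bpi^{-1}$ of unions of $k$-cylinders. The quantities on saturated sets are computed on $\bY$ via Corollary~\ref{cor:LLT}, since $\bpi_*\mu=\bmu$. Alternatively, I would use Theorem~\ref{thm:LLT} directly with $\zeta_1=1_{\bpi A'}$ approximated in $\cF_\theta$ and $\zeta_2\in L^1$. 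Here $\zeta_2$ need not be regular, so $B$ can be handled by conditional expectation onto $\bpi$-saturated sets: $\zeta_2=E(1_B\mid\bpi^{-1}\mathcal B)$ is fine, because the event only involves $\bpi$-measurable data together with $1_B\circ F^n$. Letting $n\to\infty$, then $k\to\infty$, then $\eps\to0$ yields the limit $g_{X_\alpha}(z)\mu(A)\mu(B)|K|$, uniformly in $|z|\le L$ (using the $w\to z$ uniformity from Theorem~\ref{thm:LLT}). The delicate bookkeeping is controlling the $A$-side approximation error uniformly in $n$. I would bound the error terms using the upper bound obtained from the same MLLT applied to the small saturated sets covering the bad region.
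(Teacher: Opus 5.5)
\textbf{The reduction to $\bY$ runs in the wrong time direction.} Your central claim, that $A'=F^kA$ is close to a union of $\bpi$-saturated sets, is false. The contraction estimate shows that $F^k$ maps each (saturated) $2k$-cylinder $C$ to a set of diameter $O(\theta^k)$. Hence $A\approx\bigcup_iF^kC_i$, and it is the \emph{preimage} $F^{-k}A=\{y:F^ky\in A\}\approx\bigcup_iC_i$ that is nearly saturated, as you note yourself. The forward image $F^kA\approx\bigcup_iF^{2k}C_i$ is instead squeezed along the $\bpi$-fibres and becomes measurable with respect to the past.

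A concrete example: take the two-sided Bernoulli shift, with $F$ the left shift, $\bpi x=(x_0,x_1,\dots)$, and $A=\{x_j=a_j,\ |j|\le N\}$. Then $F^kA$ depends only on $x_{-N-k},\dots,x_{N-k}$. For $k>N$ it is independent of the $\bpi$-measurable $\sigma$-algebra, so it lies at distance at least $\min\{\mu(A),1-\mu(A)\}$ from every saturated set.

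The same reversal appears in your extra term $\bu_k\circ\bpi\circ F^{-k}$. It depends on past coordinates, and it is unbounded (as $\bu$ is), so it fails the boundedness part of property (H) needed for Lemma~\ref{lem:vague}. Pushing forward also presupposes that $F$ is invertible, which the definition of two-sided Gibbs--Markov map does not require.

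The correct move, which is the paper's, uses only invariance: $\int 1_A\,1_B\circ F^n\,1_E\,d\mu=\int 1_A\circ F^k\,(1_B\circ F^k)\circ F^n\,1_E\circ F^k\,d\mu$. Here $1_E\circ F^k$ is exactly $\bpi$-measurable, since $\tu_n\circ F^k=\bu_n\circ\bF^k\circ\bpi$ and $v$ is locally constant, so no new coboundary appears. For dynamically H\"older $\zeta_r$, the function $\zeta_r\circ F^k$ is $O(\gamma^k)$-close in $L^\infty$ to a saturated $\zeta_{r,k}=\bzeta_{r,k}\circ\bpi$. The paper then takes $k=q\log n$ so that $a_n^m\gamma^k\to0$, and transfers $\bzeta_{1,k}$ by $R^k$, which is uniformly bounded in $\cF_\theta$. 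It applies Lemma~\ref{lem:LLT} with the $s$-dependent $\zeta_2(s)=\bzeta_{2,k}e^{is\cdot\oPsi_k}$ to absorb the time offset. Only afterwards does it pass to sets via the two-sided cylinders $\beta_n(y)$.

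\textbf{Your treatment of $B$ also fails.} Measurability of the other factors only gives $\int G\circ\bpi\;1_B\circ F^n\,d\mu=\int G\circ\bpi\;E(1_B\circ F^n\mid\mathcal C)\,d\mu$, where $\mathcal C=\bpi^{-1}(\text{Borel})$. But $E(1_B\mid\mathcal C)\circ F^n=E(1_B\circ F^n\mid F^{-n}\mathcal C)$, and $F^{-n}\mathcal C\subsetneq\mathcal C$. In the Bernoulli example with $B=\{x_{-1}=a\}$, $n=1$ and $G\circ\bpi=1_{\{x_0=a\}}$, the two sides are $\mu(x_0=a)$ and $\mu(x_0=a)^2$. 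Nor can $B$ be sandwiched by $d_\theta$-balls (one-sided cylinders): these contain whole fibres and do not shrink in the metric of $Y$. What is true is that $1_B\circ F^n$ is itself nearly $\mathcal C$-measurable, by the same $F^k$ shift.

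Once the direction is corrected, your fixed-$k$ sandwich is a reasonable alternative to the paper's $k=q\log n$ bookkeeping. In that scheme the error sets are covered by small saturated sets and controlled by the one-sided MLLT. You would still need the $s$-dependent Lemma~\ref{lem:LLT}, or an equivalent device, to handle the offset of $k$ between the sets and the Birkhoff sum. The inheritance of nondegeneracy and aperiodicity, and the final removal of $\chi$ via Lemma~\ref{lem:vague} with $H_1=-\chi$, $H_2=\chi$, agree with the paper.
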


\begin{proof}
Fix $\gamma\in(0,1)$.
Recall that a test function $\zeta:Y\to\R$ is dynamically H\"older if $\zeta$ is bounded and there exist constants $C>0$, $\gamma\in(0,1)$ such that $|\zeta(F^ny)-\zeta(F^ny')|\le C(\gamma^{s(y,y')}+\gamma^{s(y,y')-n})$ for all
$y,y'\in Y$ and $n\ge0$.

First, we prove the MLLT with $1_A$ and $1_B$ replaced by dynamically H\"older test functions
$\zeta_1,\zeta_2:Y\to\R$.
Define
\[
\zeta_{r,k}:Y\to\R, \qquad \zeta_{r,k}(y)=\inf\{\zeta_r(F^ky'):s(y,y')\ge 2k\},
\]
for $r=1,2$ and $k\ge1$.
Then $\zeta_{r,k}=\bzeta_{r,k}\circ \bpi$ where
$\bzeta_{r,k}\in L^\infty(\bY)$ with
$|\bzeta_{r,k}|_\infty\le |\zeta_r|_\infty$. It is standard that
$|\zeta_r\circ F^k-\zeta_{r,k}|_\infty\ll \gamma^k$. Moreover, we can choose $\gamma$ so that
$\sup_k\|R^k\bzeta_{r,k}\|<\infty$.

Let $\tu_n=\bu_n\circ\bpi$. Since $v$ is locally constant, there exists $\bv^{(n)}$ such that
$\hv^{(n)}=\bv^{(n)}\circ\bpi$. Then 
\begin{align*}
\int_Y & \zeta_1\;\zeta_2\circ F^n \;1_{\{\tu_n\in z_1a_n+K,\,\hv^{(n)}=[z_2a_n]\}}\,d\mu
\\ & =
\int_Y \zeta_1\circ F^k\;\zeta_2\circ F^{n+k} \,1_{\{\tu_n\in z_1a_n+K,\,\hv^{(n)}=[z_2a_n]\}}\circ F^k\,d\mu
 = J_1+J_2+J_3,
\end{align*}
where
\begin{align*}
J_1 & = 
\int_Y (\zeta_1\circ F^k-\zeta_{1,k})\;\zeta_2\circ F^{n+k} 
\,1_{\{\tu_n\in z_1a_n+K,\,\hv^{(n)}=[z_2a_n]\}}\circ F^k\,d\mu
\ll \gamma^k, \\
J_2 & = 
\int_Y \zeta_{1,k}\;(\zeta_2\circ F^k-\zeta_{2,k})\circ F^n 
\,1_{\{\tu_n\in z_1a_n+K,\,\hv^{(n)}=[z_2a_n]\}}\circ F^k\,d\mu
\ll \gamma^k, \\
J_3 & = 
\int_Y \zeta_{1,k}\;\zeta_{2,k}\circ F^n 
\,1_{\{\tu_n\in z_1a_n+K,\,\hv^{(n)}=[z_2a_n]\}}\circ F^k\,d\mu
 \\ & =
\int_\bY \bzeta_{1,k}\;\bzeta_{2,k}\circ \bF^n 
\,1_{\{\bu_n\in z_1a_n+K,\,\bv^{(n)}=[z_2a_n]\}}\circ \bF^k\,d\bmu
 \\ & =
\int_\bY \big(R^k\bzeta_{1,k}\big)\;\bzeta_{2,k}\circ \bF^{n-k} 
\,1_{\{\bu_n\in z_1a_n+K,\,\bv^{(n)}=[z_2a_n]\}}\,d\bmu.
\end{align*}
We take $k=q\log n$ where $q$ is sufficiently large that $\lim_{n\to\infty}a_n^m\gamma^{q\log n}=0$.
Then
\[
a_n^m\int_Y \zeta_1\;\zeta_2\circ F^n \;1_{\{\tu_n\in z_1a_n+K,\,\hv^{(n)}=[z_2a_n]\}}\,d\mu
= \nu_{n,z}(K)+o(1)
\]
as $n\to\infty$ uniformly in $z$, where
\[
\nu_{n,z}(K)=a_n^m\int_\bY \big(R^k\bzeta_{1,k}\big)\;\bzeta_{2,k}\circ \bF^{n-k} 
\,1_{\{\bu_n\in z_1a_n+K,\,\bv^{(n)}=[z_2a_n]\}}\,d\bmu.
\]

Let $D=\R^{m_1}\times[-\pi,\pi]^{m_2}\times\bY$ and $q=n-k$.
Proceeding as in the proof of Theorem~\ref{thm:LLT},
\begin{align*}
(2\pi)^{m_2} & \int_{\R^{m_1}}h \,d\nu_{n,w}  \\ & =
a_n^m\int_De^{-is_1\cdot w_1a_n}
e^{-is_2\cdot([w_2a_n]+c_n)}\hat h(s_1)
\big(R^k\bzeta_{1,k}\big)\;\bzeta_{2,k}\circ \bF^q e^{is\cdot \oPsi_n}
\,d\bmu\,ds
 \\ &  =
a_n^m\int_{D}e^{-is_1\cdot w_1a_n}e^{-is_2\cdot([w_2a_n]+c_n)}\hat h(s_1)
\big(R^k\bzeta_{1,k}\big)\;\big(\bzeta_{2,k}e^{is\cdot\oPsi_k}\big)\circ \bF^q e^{is\cdot \oPsi_q}\,
d\bmu\,ds,
\end{align*}
where $\bPsi=\big(\bu,v-\int_Y v\,d\mu\big)$
and $c_n=\big[n\int_Y v\,d\mu\big]-n\int_Y v\,d\mu$.
Hence, by Lemma~\ref{lem:LLT},
\begin{align*}
\int_{\R^m}h\,d\nu_{n,w} & =
 I_1 + O(\|R^k\bzeta_{1,k} \|\,{\textstyle\sup_s}|\bzeta_{2,k}e^{is\cdot\oPsi_k}  |_1 a_n^m a_q^{-(m+1)})=I_1+O(a_n^{-1})
\end{align*}
where
\begin{align*}
I_1  
 & = a_n^m \bmu\big(R^k\bzeta_{1,k}\big)\,
\int_{|s|< \eps }
e^{-is_1\cdot w_1a_n}e^{-is_2\cdot([w_2a_n]+c_n)}\hat h(s) 
\bmu\big(\bzeta_{2,k}e^{is\cdot\oPsi_k}\big)\,
\lambda(s)^q ds
 \\ & = \bmu\big(R^k\bzeta_{1,k}\big)\,
\int_{|s|< a_n \eps }
e^{-is_1\cdot w_1}e^{-ia_n^{-1}s_2\cdot([w_2a_n]+c_n)}\hat h(a_n^{-1}s) 
\bmu\big(\bzeta_{2,k}e^{ia_n^{-1}s\cdot\oPsi_k}\big)\,
\lambda(a_n^{-1}s)^q ds.
\end{align*}
But
\begin{align*}
\bmu\big(R^k\bzeta_{1,k}\big) & =
\mu\big( \zeta_{1,k}\big)=\mu\big( \zeta_1\big)+O(\gamma^k), \\
\bmu\big( \bzeta_{2,k}e^{is\cdot\oPsi_k}\big) & =
\mu\big( \zeta_{2,k}e^{is\cdot\tPsi_k}\big)=
\mu\big( \zeta_2\circ F^k e^{is\cdot\tPsi_k}\big)+O(\gamma^k).
\end{align*}
Hence
\begin{align*}
\lim_{n\to\infty,\,w\to z}
\int_{\R^m}h\,d\nu_{n,w}  & =
\mu(\zeta_1)\,\mu(\zeta_2)\,
\int_{\R^m}\hat h(0)e^{-s\cdot x}\E(e^{is\cdot X_\alpha})\,ds
\\ & =
\mu(\zeta_1)\,\mu(\zeta_2)\,
\int_{\R^m}h(x)\,dx\,g_{X_\alpha}(z).
\end{align*}
As in the proof of Theorem~\ref{thm:LLT}, it follows from~\cite[Theorem~10.7]{Breiman} that
 \[
\lim_{n\to\infty} \sup_{\genfrac{}{}{0pt}{}{z\in\R^m}{|z|\le L}}
\Big|a_n^m\int_Y \zeta_1\;\zeta_2\circ F^n \; 
1_{\{\tu_n\in z_1a_n+K,\,\hv^{(n)}=[z_2a_n]\}}\,d\mu
- g_{X_\alpha}(z)\mu(\zeta_1)\mu(\zeta_2) |K|\Big|=0.
\]

Next, given a measurable subset $A\subset\cA$ with
$\mu(\partial A)= 0$, we approximate $A$ and $Y\setminus A$ from within as in the proof of Proposition~\ref{prop:A}. In place of cylinders $\beta_n$, we use sets of the form $\beta_n(y)=\{F^ny':s(y,y')>2n\}$. Similarly for $B$. In this way, we obtain that
\begin{align*}
\lim_{n\to\infty} \sup_{\genfrac{}{}{0pt}{}{z\in\R^m}{|z|\le L}}
\Big|a_n^m\mu\big(y\in A\cap  F^{-n}B:\, \tu_n(y) & \in z_1a_n +K,
\, \hv^{(n)}(y)=[z_2a_n]   \big)
\\ & - g_{X_\alpha}(z)\mu(A)\mu(B)|K|\Big|=0
\end{align*}
for $A,B,K$ as in the statement of the Theorem.

Finally, we use the same argument as in Subsection~\ref{sec:pf2} to pass from
$\tu_n$ to
$u_n=\tu_n+\chi\circ F^n-\chi$.
By assumption, $\chi$ restricts to a uniformly continuous function on $A$ and $B$. Hence
a simpler version of the argument in Proposition~\ref{prop:N4} shows that
the functions $H_1=-\chi$ and $H_2=\chi$
satisfy condition~\ref{itm:H}.
We can now apply Lemma~\ref{lem:vague} to deduce the result.
\end{proof}

\paragraph{Acknowledgements}
FP is supported by the ANR projects GALS (ANR-23-CE40-0001) and RAWABRANCH (ANR-23-CE40-0008), and conducted this work within the framework of the Henri Lebesgue Center ANR-11-LABX-0020-01.

We thank the universities of Brest and Leiden for their hospitality.  IM is grateful to the hospitality of Dalian University of Technology where part of this research was carried out.

 \end{document}